\title{A general approach to massive upper bound for two-point function with application to self-avoiding walk torus plateau}
\author{
Yucheng Liu\,\orcidlink{0000-0002-1917-8330}\thanks{Department of Mathematics,
	University of British Columbia,
	Vancouver, BC, Canada V6T 1Z2.
	\href{mailto:yliu135@math.ubc.ca}{yliu135@math.ubc.ca}.
	}
}
\date{\vspace{-5ex}} 
\tikzset{every picture/.style={line width=0.75pt}}
\theoremstyle{plain}
\newtheorem{theorem}{Theorem}[section]
\newtheorem{lemma}[theorem]{Lemma}
\newtheorem{proposition}[theorem]{Proposition}
\newtheorem{corollary}[theorem]{Corollary}
\newtheorem{assumption}[theorem]{Assumption}
\numberwithin{equation}{section}
\newcommand{\ie}{i.e.}
\newcommand{\eg}{e.g.}
\newcommand{\eps}{\varepsilon}
\newcommand{\N}{\mathbb{N}}
\newcommand{\Z}{\mathbb{Z}}
\newcommand{\R}{\mathbb{R}}
\newcommand{\C}{\mathbb{C}}
\newcommand{\T}{\mathbb{T}}
\newcommand{\Scal}{\mathcal{S}}
\newcommand{\Wcal}{\mathcal{W}}
\newcommand{\del}{\partial}
\newcommand{\grad}{\nabla}
\newcommand{\inv}{^{-1}}
\renewcommand{\(}{\left(}
\renewcommand{\)}{\right)}
\newcommand{\half}{\frac{1}{2}}
\newcommand{\1}{\mathds{1}}
\newcommand{\nl}{\nonumber \\}
\renewcommand{\Re}{\mathrm{Re}\,}
\renewcommand{\Im}{\mathrm{Im}\,}
\providecommand{\abs}[1]{\lvert#1\rvert}
\providecommand{\norm}[1]{\lVert#1\rVert}
\providecommand{\bigabs}[1]{\big\lvert#1\big\rvert}
\providecommand{\Bigabs}[1]{\Big\lvert#1\Big\rvert}
\providecommand{\bignorm}[1]{\big\lVert#1\big\rVert}
\providecommand{\biggnorm}[1]{\bigg\lVert#1\bigg\rVert}
\newcommand{\mz}{m(z)}
\newcommand{\muz}{{\mu_z}}
\newcommand{\supm}{^{(m)}}
\newcommand{\supM}{^{(M)}}
\newcommand{\supN}{^{(N)}}
\newcommand{\supNm}{^{(N,m)}}
\newcommand{\supT}{^{ \mathbb{T} }}
\newcommand{\subrz}{_{r,z}}
\newcommand{\rz}{{r,z}}
\newcommand{\st}{{st}}
\newcommand{\veee}[1]{|\!|\!|#1|\!|\!|}
\newcommand{\xvee}{\veee{x}}
\providecommand{\nnnorm}[1]{\veee {#1} }
\newcommand{\const}{\mathrm{const}}
\newcommand{\Dnn}{D}
\newcommand{\crit}{_{z_c}}
\newcommand{\PiFour}{
\begin{tikzpicture}[x=0.75pt,y=0.75pt,yscale=-1,xscale=1]

\draw    (140,120) -- (180,120) ;
\filldraw[fill=white] (140,120) circle (0pt) node[left]{$0$};
\draw    (140,120) -- (160.4,85.4) ;
\draw    (160.4,85.4) -- (200.4,85.4) ;
\filldraw[fill=white] (200.4,85.4) circle (0pt) node[right]{$x$};
\draw    (160.4,85.4) -- (180,120) ;
\draw    (180,120) -- (200.4,85.4) ;
\draw  [draw opacity=0] (140,119.84) .. controls (140.06,105) and (148.19,92.07) .. (160.24,85.21) -- (180,120) -- cycle ; \draw   (140,119.84) .. controls (140.06,105) and (148.19,92.07) .. (160.24,85.21) ;  
\draw  [draw opacity=0] (200.2,85.34) .. controls (200.16,100.31) and (191.9,113.34) .. (179.71,120.16) -- (160.2,85.24) -- cycle ; \draw   (200.2,85.34) .. controls (200.16,100.31) and (191.9,113.34) .. (179.71,120.16) ;  
\draw    (183.2,81.8) -- (178.8,89.4) ;
\draw    (162.8,115.4) -- (158.4,123) ;
\end{tikzpicture}
}
\begin{document}
\maketitle

\begin{abstract}
We prove a sufficient condition for the two-point function of a statistical mechanical model on $\mathbb{Z}^d$, $d > 2$, to be 
bounded uniformly near a critical point by $|x|^{-(d-2)} \exp [ -c|x| / \xi ]$, where $\xi$ is the correlation length.
The condition is given in terms of a convolution equation satisfied by the two-point function, and we verify the condition for strictly self-avoiding walk in dimensions $d > 4$ using the lace expansion. 
As an example application, 
we use the uniform bound to study the self-avoiding walk on a $d$-dimensional discrete torus with $d > 4$, proving a ``plateau'' of the torus two-point function, a result previously obtained for weakly self-avoiding walk in dimensions $d > 4$ by Slade. 
Our method has the potential to be applied to other statistical mechanical models on $\mathbb{Z}^d$ or on the torus. 
\end{abstract}




\section{Introduction and results}

\subsection{Introduction}
We consider the two-point function $G_z$ of a statistical mechanical model on $\Z^d$, $d > 2$, near a critical point $z_c$ from the subcritical regime. The parameter $z$ here can be, \eg, inverse temperature of the Ising model, bond density of Bernoulli bond percolation, or fugacity of the self-avoiding walk. 
Our main result is a general sufficient condition for the uniform upper bound
\begin{align}
\label{eq:scaling_ub}
G_z(x) \le \frac{ c_0 } { \max \{ 1,  \abs x^{d-2} \} } e^{-c_1 \abs x / \xi(z)}
	\qquad (z\approx z_c,\ x\in \Z^d),
\end{align}
where $\xi(z)$ is the correlation length of the model,
to hold with some constants $c_0 > 0$ and $c_1 \in (0,1)$. 

Uniform bounds of the form \eqref{eq:scaling_ub} have been obtained recently, for weakly self-avoiding walk in dimensions $d>4$ \cite{Slad23_wsaw}, for percolation in $d>6$ \cite{HMS23},
for the Ising model in $d>4$ \cite{DP25-Ising}, 
and for lattice trees and lattice animals in $d>8$ \cite{LS25a}.
The bounds are useful in studying the models on the discrete torus (a box with periodic boundary conditions).
For example, they can be used to prove the ``plateau'' of the torus two-point function at the infinite-volume critical point \cite{LPS25-universal}, which, roughly speaking, is the phenomenon that the torus two-point function decays like its $\Z^d$ counterpart for small $x$, and stops decaying and levels off to a nonzero constant for larger $x$ in the torus. 
For the weakly self-avoiding walk, the plateau and the upper bound \eqref{eq:scaling_ub} are then used in \cite{MS23, Mich23} to prove that the walk on a torus of volume $V$ needs to have length at least $\sqrt V$ to ``feel'' that it lives on the torus. 
For percolation, it is shown in \cite{HMS23} that the plateau and the upper bound \eqref{eq:scaling_ub} allow a proof of the torus triangle condition (defined in \cite{BCHSS05a}) without the torus lace expansion of \cite{BCHSS05b}. 
The results of \cite{MS23, Mich23, HMS23, LPS25-universal} provide the new perspective that statistical mechanical models on a torus can be studied using results on $\Z^d$. 
With this perspective, we expect our general upper bound to be useful for many statistical mechanical models on the torus. 
As an illustration, we prove the ``plateau'' phenomenon for strictly self-avoiding walk in dimensions $d>4$. 

Our sufficient condition for \eqref{eq:scaling_ub} is given in terms of a convolution equation satisfied by $G_z$.
For self-avoiding walk in dimensions $d>4$, the convolution equation is provided by Brydges and Spencer's lace expansion \cite{BS85, Slad06}. 
Since there are versions of the lace expansion derived for many statistical mechanical models, \eg,
the Ising model \cite{Saka07} and $\varphi^4$ model \cite{BHH21, Saka15} in dimensions $d > 4$,
percolation in $d > 6$ \cite{HS90a, FH17}, 
lattice trees and lattice animals in $d > 8$ \cite{HS90b,FH21}, 
we expect our result to be applicable to all these models.
In fact, the analysis of lattice trees/animals in \cite{LS25a} uses some ideas developed in this paper.

\medskip\noindent
{\bf Notation:}  
We write $a \vee b = \max \{ a , b \}$ and $a \wedge b = \min \{ a , b \}$.
We write $f = O(g)$ or $f \lesssim g$ to mean there is a constant $C> 0$ for which $\abs {f(x)} \le C\abs {g(x)}$, 
and write $f \asymp g$ to mean $g \lesssim f \lesssim g$.
To avoid dividing by zero, with $|x|$ the Euclidean norm of $x\in \R^d$, we define
\begin{equation}
\label{eq:xvee1}
	\xvee = \max\{|x| , 1\}.
\end{equation}
Note that \eqref{eq:xvee1} does not define a norm on $\R^d$.

\subsection{Main result}

To motivate our sufficient condition, we first recall the simple random walk.
Let $\Dnn: \Z^d \to [0,1)$ be defined by $\Dnn(x) = \frac{1}{2d} \1\{|x|=1\}$, and let $\abs\Omega = 2d$. 
For $\mu \in (0,\abs\Omega\inv]$, the \emph{random walk two-point function} (lattice Green function)
$C_\mu : \Z^d \to \R$ is the solution, vanishing at infinity, of the convolution equation
\begin{equation}
\label{eq:LGF}
    (\delta -\mu\abs\Omega \Dnn)*C_\mu = \delta ,
\end{equation}
where $\delta$ is the Kronecker delta $\delta(x)=\delta_{0,x} = \1\{x=0\}$,
and the convolution is defined by $(f*g)(x) = \sum_{y\in \Z^d} f(y)g(x-y)$.
In dimensions $d > 2$,
equation \eqref{eq:LGF} can be solved using the (inverse) Fourier transform
\begin{equation}
\hat f(k)  = \sum_{x\in\Z^d}f(x) e^{ik\cdot x} 	\quad (k \in \T^d),
\qquad
f(x) = \int_{\T^d} \hat f(k) e^{-ik\cdot x}  \frac{ dk }{ (2\pi)^d } \quad (x\in \Z^d),
\end{equation}
where $\T^d=(\R/2\pi\Z)^d$ is the continuum torus, which we identify with $(-\pi,\pi]^d \subset \R^d$.
That is, $C_\mu$ is given by the absolutely convergent integral
\begin{equation}
C_\mu(x) 
= \int_{\T^d} \frac{ e^{-ik\cdot x} }{1 - \mu\abs\Omega\hat \Dnn(k)} \frac{ dk }{ (2\pi)^d },
	\qquad  \hat \Dnn(k) = d^{-1}\sum_{j=1}^d \cos k_j.
\end{equation}
It is known that $C_\mu$ satisfies a near-critical estimate of the form \eqref{eq:scaling_ub} (see Lemma~\ref{lem:C}).
At the critical point $\mu_c = 1/(2d)$, we have $C_{1/(2d)}(x) \sim \const\, \abs x^{-(d-2)}$ as $\abs x\to \infty$ (see, e.g., \cite{LL10});
detailed asymptotics of subcritical $C_\mu$ are given in \cite{MS22}.

In analogy to the random walk, and motivated by applications based on the lace expansion, we assume the two-point function $G_z$ of interest satisfies a convolution equation
\begin{equation} \label{eq:FGz}
F_z * G_z = \delta,
\end{equation}
with $F_z$ satisfying assumptions stated below, 
and we assume $G_z$ is given by the Fourier integral
\begin{equation} \label{eq:Gint}
G_z(x) = \int_{\T^d} \frac{ e^{-ik\cdot x} }{ \hat F_z(k) } \frac{dk}{(2\pi)^d}.
\end{equation}
Note that $G_z$ need not come from a nearest-neighbour model.
The functions $F_z, G_z$ typically decay exponentially in the subcritical regime. In our assumptions, we remove part of the exponential decay using an \emph{exponential tilting}: 
Given $m\ge 0$ and a function $f : \Z^d \to \R$, we write
\begin{equation}
f\supm(x) = f(x)e^{mx_1} 	\qquad (x = (x_1, \dots, x_d)),
\end{equation}
and we write $\hat f \supm(k) = \sum_{x\in \Z^d} f\supm(x) e^{ik\cdot x}$ for its Fourier transform.

\begin{assumption}
\label{ass:Fz}
We assume there is a family of $\Z^d$-symmetric functions 
(invariant under reflection in coordinate hyperplanes and rotation by $\pi/2$)
$F_z : \Z^d \to \R$, 
defined for $z \in [z_c - \delta, z_c)$ with some $z_c, \delta >0$, 
and there is a function $m: [z_c - \delta, z_c ) \to (0, \infty)$, 
such that:
\begin{enumerate}[label=(\roman*)]

\item 
There are constants $\eps > 0$, $K_1 > 0$ such that $\sum_{x\in \Z^d} \abs x^{2+\eps} \abs{ F_z\supm (x) } \le K_1$ uniformly in $z$ and in $m \in [0, \mz)$;

\item
$\hat F_z\supm (0) \ge 0$ for all $z$ and all $m \in [0, \mz)$,
and $\hat F_z(0) \to 0$ as $z\to z_c$;

\item \emph{(Untilted infrared bound)}
There is a constant $K_2 >0$ such that
$\hat F_z(k) - \hat F_z(0) \ge K_2 \abs k^2$ uniformly in $k\in \T^d$ and in $z$;

\item 
$\mz \to 0$ as $z\to z_c$.
\end{enumerate}
If $d > 4$, we further assume 
\begin{enumerate}[label=(\roman*)] \setcounter{enumi}{4}
\item
For some $1 \le p < d/4$, we have
\[
\sup_z \sup_{0\le m < \mz}
\bignorm{ \abs x^{d-2} F_z\supm (x) }_{\ell^{p\wedge 2}(\Z^d)} < \infty .
\]
\end{enumerate}
\end{assumption}

For the self-avoiding walk, we will take the function $\mz$ to be the exponential decay rate of $G_z$ (\ie, the inverse correlation length $\xi(z)\inv$), see \eqref{eq:def_mass}.
But $\mz$ can be chosen in other ways, as in \cite[Section~1.3]{LS25a}.
Assumption~\ref{ass:Fz}(ii) and (iv) are indications of a subcritical model near the critical point $z_c$. 
Assumption~\ref{ass:Fz}(iii) and $\hat F_z(0) \ge 0$ imply absolute convergence of the Fourier integral \eqref{eq:Gint} since $d>2$. 
Assumption~\ref{ass:Fz}(i) and (v) are implied by the stronger assertion that
\begin{equation} \label{eq:F_decay}
\sup_z \sup_{0\le m < \mz}
\abs{ F_z\supm (x) } \lesssim \frac 1 { \nnnorm x^{d + 2 + \rho} }
\quad \text{with}\quad 
\rho > \frac { d-8 } 2 \vee 0 .
\end{equation}
The critical, untilted ($m=0$) case of \eqref{eq:F_decay} and Assumption~\ref{ass:Fz}(ii)--(iii) are used in \cite{LS24a} to prove that $G\crit(x) \sim \const\, \abs x^{-(d-2)}$ as $\abs x\to \infty$;
their proof also gives the uniform upper bound $G_z (x) \lesssim \nnnorm x^{-(d-2)}$ for near-critical $z$. 
We improve the bound by an exponential factor. 

\begin{theorem}
\label{thm:near_critical}
Let $d>2$, let $F_z$ satisfy Assumption~\ref{ass:Fz}, and 
let $G_z$ be the solution to \eqref{eq:FGz} given by the Fourier integral \eqref{eq:Gint}.
Then there are constants $c_0 > 0$, $c_1 \in (0,1)$, and $\delta>0$ such that for all $z \in [z_c - \delta, z_c)$ and $x\in \Z^d$, 
\begin{equation} \label{eq:G_ub}
G_z(x) \le \frac{ c_0 } { \nnnorm x^{d-2} } e^{-c_1 m(z) \abs x} .
\end{equation}
\end{theorem}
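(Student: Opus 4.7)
The natural strategy is \emph{exponential tilting}: for $m \in [0, \mz)$, introduce $F_z\supm(x) = F_z(x) e^{mx_1}$ and $G_z\supm(x) = G_z(x) e^{mx_1}$. Since $\delta\supm = \delta$ and exponential tilting distributes over convolutions, the identity $F_z * G_z = \delta$ yields $F_z\supm * G_z\supm = \delta$, and $G_z\supm$ inherits the Fourier representation $G_z\supm(x) = \int_{\T^d} e^{-ik\cdot x}/\hat F_z\supm(k)\, dk/(2\pi)^d$ analogous to \eqref{eq:Gint}. The proof is thereby reduced to the $m$-uniform polynomial bound
\begin{equation}\label{eq:plan-poly}
\bigabs{ G_z\supm(x)} \le c_0 \xvee^{-(d-2)} \qquad \bigl(m \in [0, \mz),\ x \in \Z^d\bigr).
\end{equation}

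Given \eqref{eq:plan-poly}, the full estimate \eqref{eq:G_ub} is immediate. Writing $G_z(x) = G_z\supm(x) e^{-mx_1}$ gives $\bigabs{G_z(x)} \le c_0 \xvee^{-(d-2)} e^{-m x_1}$ for $x_1 \ge 0$. The $\Z^d$-symmetry of $F_z$ allows the same argument to be repeated after tilting in any of the $2d$ directions $\pm e_i$; choosing $m = c_1 \mz$ with $c_1 \in (0,1)$ and combining the resulting bounds yields $\bigabs{G_z(x)} \lesssim \xvee^{-(d-2)} e^{-c_1 \mz \max_i |x_i|}$, and the elementary inequality $|x| \le \sqrt d\, \max_i |x_i|$ converts this to the claimed $|x|$-decay after absorbing $1/\sqrt d$ into $c_1$.

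To prove \eqref{eq:plan-poly}, I would adapt the Fourier-analytic and $x$-space techniques of \cite{LS24a}, which established $G_{z_c}(x) \sim \mathrm{const}\, |x|^{-(d-2)}$ in the untilted critical case. Two ingredients are required: first, a \emph{tilted infrared bound} providing a lower bound of the form $\bigabs{\hat F_z\supm(k)} \gtrsim \hat F_z\supm(0) + |k|^2$ uniformly in $m \in [0,\mz)$ and $z$ near $z_c$; and second, polynomial decay of $F_z\supm$, supplied directly by Assumption~\ref{ass:Fz}(i) and, when $d>4$, (v). With both in hand, one analyzes the Fourier integral by splitting into a small-$k$ piece (whose leading behaviour is captured by approximating $1/\hat F_z\supm(k)$ by the massive continuum propagator $\bigl(\hat F_z\supm(0) + K|k|^2\bigr)\inv$, which Fourier-inverts to $\xvee^{-(d-2)}$ asymptotics) and a large-$k$ piece controlled by convolution estimates together with the $\ell^{p \wedge 2}$ decay bound from (v).

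The main obstacle is the tilted infrared bound, since tilting breaks the $x_1 \to -x_1$ symmetry of $F_z$ and $\hat F_z\supm$ becomes complex-valued. Near $k = 0$, a second-order Taylor expansion justified by the finite second moment from Assumption~\ref{ass:Fz}(i) shows that the linear contribution to $\hat F_z\supm(k) - \hat F_z\supm(0)$ is purely imaginary (its only nonzero component has coefficient $\sum_x F_z(x) \sinh(mx_1)\, x_1$, by the remaining coordinate symmetries), and hence contributes constructively to $\bigabs{\hat F_z\supm(k)}^2$; the real quadratic part is a bounded perturbation of the untilted Hessian at $0$ and is controlled from below via Assumption~\ref{ass:Fz}(iii). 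Away from $k = 0$, a naive perturbative argument in $m$ fails because $m$ can approach $\mz$; instead I would exploit that $1/\hat F_z\supm$ is the Fourier transform of the well-defined function $G_z\supm$ (so $\hat F_z\supm$ cannot vanish on $\T^d \setminus \{0\}$), and quantify a uniform lower bound by combining this nonvanishing with compactness of $\T^d$ and the uniform moment and $\ell^{p\wedge 2}$ controls from (i) and (v). Making this uniformity hold all the way up to the critical line $m = \mz$ is the crux of the argument.
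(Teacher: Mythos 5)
Your reduction via exponential tilting to an $m$-uniform polynomial bound is exactly the paper's first move, and tilting with $m = \sigma m(z)$ for some $\sigma < 1$ (not all the way to $m(z)$) is the right choice; so your final remark that ``the crux'' is pushing uniformity ``all the way up to $m = m(z)$'' is a misdirection that contradicts your own reduction. The paper's Lemma~2.6 proves the massive infrared bound only for $m \le \sigma m(z)$, and that suffices. The genuine substance, however, lies in proving \eqref{eq:plan-poly}, and here your plan is missing the ideas that make the argument close. The paper does not approximate $1/\hat F_z\supm$ directly by a massive continuum propagator; it writes $G_z = \lambda_z C_{\mu_z} + C_{\mu_z}*E_z*G_z$, choosing $\lambda_z,\mu_z$ as the solution of a linear system so that the error kernel $E_z = A_{\mu_z} - \lambda_z F_z$ has \emph{vanishing zeroth and second moments}. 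That moment cancellation is what makes $\hat E_z\supm(k) = O\bigl((|k|+m)^{2+\eps\wedge 1}\bigr)$, and it is precisely this extra decay that permits $d-2$ derivatives of the remainder $\hat f_z\supm = \hat E_z\supm/(\hat A_{\mu_z}\supm \hat F_z\supm)$ to remain in $L^1(\T^d)$, yielding $|f_z\supm(x)| \lesssim \xvee^{-(d-2)}$ by the Hausdorff--Young type bound of Lemma~\ref{lem:fourier}. Your ``approximate by $(\hat F_z\supm(0)+K|k|^2)^{-1}$'' step implicitly assumes this kind of moment-matching but never states why the resulting error is controllable up to order $d-2$, and under Assumption~\ref{ass:Fz}(v) the relevant derivatives are only \emph{weak} derivatives, which requires the $\ell^{p\wedge 2}$-based machinery of \cite{LS24a}; none of this appears in your plan.

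The second missing ingredient concerns your proposed tilted infrared bound $|\hat F_z\supm(k)| \gtrsim \hat F_z\supm(0) + |k|^2$. On its own this does not suffice: after dividing $\hat E_z\supm$ by $\hat A_{\mu_z}\supm\hat F_z\supm$, you need the four powers of $(|k|+m)$ from the denominators to cancel against $(|k|+m)^{2+\eps\wedge 1}$ from the numerator, and for this you need the quantitative statement $\hat F_z\supm(0) \gtrsim m(z)^2$, i.e.\ the full bound $|\hat F_z\supm(k)| \gtrsim (|k|+m(z))^2$. That lower bound on $\hat F_z\supm(0)$ is not a consequence of positivity or compactness; it is obtained in the paper by comparing the tilt $m \le \sigma m(z)$ to an intermediate tilt $\sigma' m(z)$ with $\sigma < \sigma' < 1$ and Taylor-expanding $\cosh$, using Assumption~\ref{ass:Fz}(ii) at $\sigma' m(z)$ together with the untilted infrared bound (iii). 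Your ``nonvanishing plus compactness'' argument gives no uniformity in $z$ and $m$ as $z\to z_c$, which is exactly the regime where $\hat F_z\supm(0) \to 0$. Finally, the exponential factor in \eqref{eq:G_ub} in the paper comes not from the continuum Green function but from Slade's bound on the lattice $C_\mu$ (Lemma~\ref{lem:C}) together with $m(z) \lesssim m_0(\mu_z)$ (Lemma~\ref{lem:mass}); you would need an analogue of this comparison for whatever leading term you subtract.
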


The conclusion of Theorem~\ref{thm:near_critical} is \eqref{eq:scaling_ub} when $\mz$ is taken to be $\xi(z)\inv$.
The theorem is inspired by both the analysis of weakly self-avoiding walk in \cite{Slad23_wsaw} and the analysis of general convolution equations in \cite{LS24a}. Both references are based on \cite{Slad22_lace}. 
We follow \cite{LS24a} in the use of \emph{weak derivatives}, which overcomes the obstacle identified in \cite[Remark~5.3]{Slad23_wsaw} and opens up the possibility for handling percolation and lattice trees/animals within the same framework; previously, the upper bound \eqref{eq:G_ub} for high-dimensional percolation is proved using methods specific to percolation that does not apply to self-avoiding walk \cite{HMS23}. 
For spread-out percolation in dimensions $d >  6$, 
its $F_z$ is proved in \cite{LS24b} (using results of \cite{HHS03}) to satisfies the decay estimate \eqref{eq:F_decay} with $\rho = d-6$ when $m=0$.
The decay estimate is expected to hold for all $m \in [0, \mz)$, by extending diagrammatic estimates in \cite{HHS03}.

\subsection{Applications to self-avoiding walk}
\label{sec:saw_intro}

\subsubsection{The model}
The self-avoiding walk is defined as follows. 
Recall $D(x) = \frac 1 {2d}  \1\{\abs x=1\}$.
For $n\in \N$, we use $D^{*n}$ to denote the $n$-fold convolution of $D$ with itself, and we define $D^{*0}(x) = \delta_{0,x}$. 
We let $\mathcal W_n(x)$ denote the set of $n$-step walks form 0 to $x$, \ie, the set of $\omega = (\omega(0), \dots, \omega(n))$ with $\omega(i) \in \Z^d$ for all $i$, $\omega(0)=0$, $\omega(n)=x$, and $ \abs{ \omega(i) - \omega(i-1) } = 1$ for all $i\ge 1$,
and let $\mathcal S_n(x)$ denote the set of $n$-step \emph{self-avoiding walks} from 0 to $x$, which are $n$-step walks from 0 to $x$ that satisfies $\omega(i) \ne \omega(j)$ for all $i\ne j$. 
For $n=0$, we let $\mathcal W_0(x) = \mathcal S_0(x)$ consist of the zero-step walk $\omega(0)=0$ when $x=0$, and we let it be the empty set otherwise. 
Given $z\ge 0$ and $x\in \Z^d$, we define the \emph{self-avoiding walk two-point function} to be
\begin{equation} \label{eq:def_G}
G_z(x) = \sum_{n=0}^\infty \sum_{\omega \in \Scal_n(x) } z^n.
\end{equation}
We note that the random walk two-point function defined in \eqref{eq:LGF} has the explicit formula
\begin{equation} \label{eq:def_C}
C_\mu(x) = \sum_{n=0}^\infty  (\mu \abs\Omega D)^{*n}(x)
= \sum_{n=0}^\infty \sum_{\omega \in \Wcal_n(x) } \mu^n. 
\end{equation}
Compared to \eqref{eq:def_C}, the sum defining $G_z$ sums over only the walks with no self-intersections. 
A standard subadditivity argument implies the existence of $z_c \ge \mu_c = \abs\Omega\inv$ such that the \emph{susceptibility} $\chi(z) = \sum_{x\in\Z^d} G_z(x)$ is finite if and only if $z \in [0,z_c)$, so the sum defining $G_z(x)$ converges at least for $z \in [0, z_c)$. 
It also follows from subadditivity that $\chi(z) \ge (1- z/z_c)\inv$ (see, e.g., \cite[Theorem~2.3]{Slad06}).

For all dimensions $d \ge 2$, it is known (\eg, \cite[Section~4.1]{MS93}) that $G_z(x)$ decays exponentially when $z \in (0,z_c)$, with \emph{mass} (\ie, exponential decay rate)
\begin{equation} \label{eq:def_mass}
m(z) = \lim_{\abs x_z \to \infty} \frac{ - \log G_z(x) }{ \abs x_z } \in (0,\infty), 
\end{equation}
where $\abs \cdot_z$ is a $z$-dependent norm satisfying $\norm x_\infty \le \abs x_z \le \norm x_1$. 
The \emph{correlation length} is $\xi(z) = \mz\inv$. 
It is also known that $m(z)$ is continuous and strictly decreasing in $z$, with limits $\mz \to \infty$ as $z\to 0$ and $\mz \to 0$ as $z\to z_c$. 
The two-point function $G_z$ satisfies the bound \cite[Lemma~4.1.5]{MS93}
\begin{equation} \label{eq:G_exp_bound}
G_z(x) \le \norm{ G_z }_{\ell^2(\Z^d)} \, e^{-m(z) \norm x_\infty}
	\qquad (x\in \Z^d)
\end{equation}
and obeys the Ornstein--Zernike decay \cite{CC86b}, \cite[Theorem~4.4.7]{MS93}
\begin{equation} \label{eq:OZ}
G_z(x_1, 0, \dots, 0) \sim \frac{ c_z }{ x_1^{(d-1)/2} } e^{-m(z) x_1}
	\qquad (x_1\to \infty)
\end{equation}
with some $c_z >0$. 

For critical $z_c$, it is partially proven that 
\begin{equation} \label{eq:G_crit_asymp}
G\crit(x) \sim  \frac \const {\abs x^{d-2+\eta} }
\qquad (\abs x\to \infty),
\end{equation}
where $\eta$ is a universal critical exponent.
The conjectured values for $\eta = \eta(d)$ are $\eta(2) = 5/24$, $\eta(3) \approx 0.031043$, and $\eta(4) = 0$.\footnote{
$\eta(4)=0$ is proved for a related model called the continuous-time weakly self-avoiding walk \cite{BBS-saw4}, using a rigorous renormalisation group method.
} 
In dimensions $d>4$, \eqref{eq:G_crit_asymp} is proved with $\eta = 0$ by \cite{Hara08}, using the lace expansion; the proof is simplified in \cite{LS24a}. 
The lace expansion also yields many other \emph{mean-field} results for self-avoiding walk in dimensions $d>4$, including the following asymptotic behaviours of the susceptibility and the mass as $z\to z_c$ \cite{HS92a} 
\begin{equation} \label{eq:mass_asymp}
\chi(z) \sim \const (z_c - z)\inv, \qquad 
\mz \sim \const (z_c - z)^{1/2}.
\end{equation}
More background materials on self-avoiding walk can be found in \cite{MS93, Slad06, Liu25_thesis}. 

Motivated by the behaviours of the simple random walk \cite{MS22},
we believe the crossover from the critical behaviour \eqref{eq:G_crit_asymp} to the subcritical Ornstein--Zernike behaviour \eqref{eq:OZ} to take the form
\begin{equation}
G_z(x) \approx \frac {\const}{ \abs x_z^{d-2} } \times \begin{cases}
1 & (\abs x_z \le \mz\inv ) \\
( \mz \abs x_z )^{(d-3)/2} e^{-\mz \abs x_z} &(\abs x_z \ge \mz \inv)
\end{cases}
\end{equation}
in dimensions $d > 4$.
Our first result is an upper bound in this direction.

\subsubsection{Self-avoiding walk results}
Using Theorem~\ref{thm:near_critical}, with the convolution equation \eqref{eq:FGz} provided by the lace expansion, we prove the following theorem. 

\begin{theorem}
\label{thm:saw}
Let $d > 4$. 
There are constants $c_0 > 0$ and $c_1 \in (0,1)$ such that for all $z \in (0, z_c)$ and $x\in \Z^d$, 
\begin{equation} \label{eq:G_saw}
G_z(x) \le \frac{ c_0 } { \nnnorm x^{d-2} } e^{-c_1 m(z) \abs x} .
\end{equation}
\end{theorem}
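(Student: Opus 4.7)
The plan is to verify Assumption~\ref{ass:Fz} for strictly self-avoiding walk, apply Theorem~\ref{thm:near_critical} to handle the near-critical regime $z \in [z_c - \delta, z_c)$, and extend to all $z \in (0,z_c)$ by elementary means. The convolution equation \eqref{eq:FGz} is provided by the Brydges--Spencer lace expansion \cite{BS85, Slad06}, which yields
\[
G_z(x) = \delta_{0,x} + z\abs\Omega(D * G_z)(x) + (\Pi_z * G_z)(x)
\]
with $\Pi_z : \Z^d \to \R$ the alternating-sum lace coefficient. Setting $F_z := \delta - z\abs\Omega D - \Pi_z$ puts this into the form \eqref{eq:FGz}, and $\Z^d$-symmetry of $F_z$ is standard. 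Items (ii) and (iv) of Assumption~\ref{ass:Fz} follow at once: tilting the convolution identity yields $\hat F_z\supm(0) = 1/\hat G_z\supm(0) > 0$ since $G_z \ge 0$ with $G_z(0) = 1$; $\hat F_z(0) = 1/\chi(z) \to 0$ because $\chi(z) \to \infty$; and $m(z) \to 0$ is classical.

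The technical heart of the argument is a uniform pointwise estimate
\[
\sup_{z \in [z_c - \delta, z_c)}\ \sup_{0 \le m < m(z)} |\Pi_z\supm(x)|
\;\lesssim\; \nnnorm x^{-(d + 2 + \rho)}
\qquad \text{with some } \rho > \tfrac{d - 8}{2} \vee 0,
\]
which via \eqref{eq:F_decay} implies items (i) and (v). I would run the standard lace-expansion bootstrap of \cite{Slad06} with the tilted two-point function $G_z\supm$ substituted for $G_z$ throughout. Since $e^{m x_1} \le e^{m \norm x_\infty}$ and $m < m(z)$ strictly, the a priori bound \eqref{eq:G_exp_bound} shows $G_z\supm$ still decays exponentially, so convolutions $(G_z\supm)^{*k}$ and weighted diagrammatic sums remain finite, with the same polynomial $x$-decay as in the untilted case. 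For item (iii) I combine the decomposition
\[
\hat F_z(k) - \hat F_z(0) = z\abs\Omega\bigl(1 - \hat D(k)\bigr) + \bigl(\hat\Pi_z(0) - \hat\Pi_z(k)\bigr),
\]
the classical infrared bound $1 - \hat D(k) \asymp \abs k^2$, and the elementary estimate
\[
|\hat\Pi_z(0) - \hat\Pi_z(k)| \le \tfrac12 \abs k^2 \sum_x \abs x^2 |\Pi_z(x)|,
\]
with the second-moment sum produced by the same bootstrap and small enough to be absorbed into the leading $\abs k^2$ term.

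With Assumption~\ref{ass:Fz} verified, Theorem~\ref{thm:near_critical} yields \eqref{eq:G_saw} on $[z_c - \delta, z_c)$. For $z \in (0, z_c - \delta]$ both $m(z) \ge m(z_c - \delta) > 0$ and $\norm{G_z}_{\ell^2}$ are uniformly bounded; splitting into the regions $\abs x \le 1/m(z)$ (where the monotone bound $G_z(x) \le G_{z_c}(x) \lesssim \nnnorm x^{-(d-2)}$ from \eqref{eq:G_crit_asymp} combines with $e^{-c_1 m(z) \abs x} \gtrsim 1$) and $\abs x > 1/m(z)$ (where the inequality $t^{d-2} e^{-(1 - c_1 \sqrt d) t} \le C$, valid for $c_1 < 1/\sqrt d$, rewrites \eqref{eq:G_exp_bound} into the desired combined form) yields \eqref{eq:G_saw} globally after enlarging $c_0$ and shrinking $c_1$ if necessary.

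The hard part is the tilted diagrammatic estimate. At $m = 0$ the required polynomial decay of $\Pi_z$ is classical, but obtaining it uniformly in $m \in [0, m(z))$ requires care: tilting breaks the $\Z^d$-symmetry of the individual two-point functions inside each lace diagram, and the bootstrap must close up to the sharp exponential rate $m(z)$. The saving fact is that $G_z\supm(x) \le G_z(x) e^{m \abs x}$ together with the strict inequality $m < m(z)$ provides positive exponential slack that can be used to absorb the tilting into untilted bounds, after which the standard weighted-diagram machinery closes.
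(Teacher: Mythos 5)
Your overall architecture matches the paper's: verify Assumption~\ref{ass:Fz} from the lace expansion, invoke Theorem~\ref{thm:near_critical} near $z_c$, and handle small $z$ by elementary exponential estimates. The verifications of (ii), (iv), and the treatment of $z\le z_c-\delta$ are fine. The genuine gap is in the diagrammatic estimate on $\Pi_z\supm$, which you flag as the hard part. You claim the uniform pointwise bound $|\Pi_z\supm(x)|\lesssim \nnnorm{x}^{-(d+2+\rho)}$ and propose to obtain it by running the bootstrap of \cite{Slad06} with $G_z\supm$ substituted for $G_z$ throughout, asserting that $G_z\supm$ has ``the same polynomial $x$-decay as in the untilted case.'' That assertion is false, and the method does not close: when $m\uparrow m(z)$, the tilted two-point function $G_z\supm(x)=G_z(x)e^{mx_1}$ decays exponentially only at the vanishing rate $m(z)-m$, and by the Ornstein--Zernike asymptotics \eqref{eq:OZ} its polynomial decay along a coordinate axis is of order $|x|^{-(d-1)/2}$, not $|x|^{-(d-2)}$. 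Moreover, inside a lace diagram one cannot tilt every two-point function by the full factor $e^{m y_1}$; the tilt $e^{mx_1}$ can only be distributed along a single path from $0$ to $x$ through the diagram, so a naive wholesale substitution of $G_z\supm$ for $G_z$ is not even the correct bound to attempt.

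The paper avoids all of this by proving a weaker but sufficient statement: the uniform \emph{moment} bound $\sum_x |x|^a|\Pi_z\supm(x)|\le K_a$ for $a\le d-2$ (Proposition~\ref{prop:diagram}). Its proof distributes $e^{mx_1}$ along one side of the diagram and $|x|^a$ along the other, places the $\ell^\infty$ norm on a single \emph{untilted} factor $|x|^aG_z(x)$ using \eqref{eq:G_crit_asymp} and \eqref{eq:Ga_sup}, and controls the remaining (tilted) factors by $\ell^2$ norms via the uniformly small tilted bubble \eqref{eq:bubble^m}. No pointwise power law on $\Pi_z\supm$ is ever needed, which is exactly why Assumption~\ref{ass:Fz}(v) is stated in $\ell^{p\wedge 2}$ rather than pointwise. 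Separately, your sketch of Assumption~\ref{ass:Fz}(iii), absorbing $\sum_x|x|^2|\Pi_z(x)|$ into $z\abs\Omega(1-\hat D(k))$, presupposes that the second-moment sum is small, which your bootstrap does not deliver for nearest-neighbour walk; the paper instead simply invokes the established infrared bound of \cite{Hara08,HS92a,HS92b} as Proposition~\ref{prop:lace}(ii).
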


The conclusion of Theorem~\ref{thm:saw} trades some of the exponential decay in \eqref{eq:G_exp_bound} into a power law prefactor $\abs x^{-(d-2)}$. In light of the Ornstein--Zernike decay \eqref{eq:OZ}, it is impossible to have $c_1 = 1$ in \eqref{eq:G_saw} unless $d=3$. 

The next result concerns self-avoiding walks on the discrete torus $\T^d_r = (\Z / r\Z)^d$. 
Its statement involves the evaluation of $G_z$ at a torus point $x$, through the identification of $\T^d_r$ with $[ - \frac r 2, \frac r 2 )^d \cap \Z^d$.
The theorem shows that the slightly subcritical torus two-point function $G_z\supT(x)$ (defined by the analogue of \eqref{eq:def_G} using walks on $\T^d_r$ rather than on $\Z^d$) behaves like its $\Z^d$ counterpart $G_z(x)$, plus a constant term that can dominate when $\abs x$ is large. 
The constant term is the torus ``plateau.'' 

\begin{theorem}
\label{thm:torus}
Let $d>4$. 
There are constants $c_i>0$ and $M > 0$ such that 
\begin{equation} \label{eq:G^T}
G_z(x) + c_1 \frac{ \chi(z) }{ r^d }
\le G_z\supT(x)
\le G_z(x) + c_2 \frac{ \chi(z) }{ r^d } e^{-c_5 m(z) r}, 
\end{equation}
where the upper bound holds for all $r\ge3$, all $x\in \T^d_r$, and all $z\in (0,z_c)$, 
whereas the lower bound holds for all $r\ge 3$, all $x\in \T^d_r$ with $\abs x \ge M$, and all $z\in [ z_c - c_3 r^{-2}, z_c - c_4 r^{-d/2} ]$. 
\end{theorem}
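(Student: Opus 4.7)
The plan is to invoke the universal framework of \cite{LPS25-universal}, which derives torus plateau statements from a $\Z^d$ two-point function bound of the form \eqref{eq:scaling_ub}. Theorem~\ref{thm:saw} supplies exactly this input for strictly self-avoiding walk in $d > 4$, so the framework delivers Theorem~\ref{thm:torus}. Below I describe the two halves of the argument more concretely.

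For the upper bound in \eqref{eq:G^T}, any torus self-avoiding walk from $0$ to $x$ lifts uniquely to a $\Z^d$ self-avoiding walk from $0$ to $x + rn$ for some $n \in \Z^d$; this lift is injective but not surjective (a $\Z^d$ self-avoiding walk need not project to a torus self-avoiding walk), yielding
\begin{equation*}
	G_z\supT(x) \le \sum_{n \in \Z^d} G_z(x + rn) = G_z(x) + \sum_{n \ne 0} G_z(x + rn).
\end{equation*}
For $n \ne 0$ and $x \in [-r/2, r/2)^d$ one has $|x + rn| \gtrsim r|n|$, so Theorem~\ref{thm:saw} gives $\sum_{n \ne 0} G_z(x + rn) \lesssim r^{-(d-2)} e^{-\alpha \mz r}$ for some $\alpha > 0$, with the $|n|=1$ terms dominant. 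Rewriting this in the form $c_2 \chi(z) r^{-d} e^{-c_5 \mz r}$ with $c_5 \in (0, \alpha)$ reduces to checking $r^2 / \chi(z) \lesssim e^{(\alpha - c_5) \mz r}$, which holds using $\chi(z) \ge 1$ when $\mz r$ is large and the mean-field bound $\chi(z) \gtrsim \mz^{-2}$ from \eqref{eq:mass_asymp} when $\mz r$ is bounded.

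For the lower bound, Fourier inversion on $\T^d_r$ gives
\begin{equation*}
	G_z\supT(x) = \frac{\chi\supT(z)}{r^d} + \frac{1}{r^d} \sum_{k \ne 0} \hat G_z\supT(k) e^{-i k \cdot x},
\end{equation*}
where $k$ runs over the non-zero dual wavenumbers in $(2\pi r^{-1} \Z)^d \cap (-\pi, \pi]^d$. The argument splits into two parts: (i) show $\chi\supT(z) \gtrsim \chi(z)$ throughout the $z$-window $[z_c - c_3 r^{-2}, z_c - c_4 r^{-d/2}]$; (ii) show the non-zero-mode sum differs from $r^d G_z(x)$ by at most $o(\chi(z))$ once $|x| \ge M$. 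Part (i) follows by comparing a torus lace expansion with its $\Z^d$ counterpart: the chosen window places $z$ below the finite-volume critical point while keeping $\chi(z) \ge r^2$, so the torus diagrammatic bounds are comparable to their $\Z^d$ analogues. Part (ii) compares the discrete Fourier sum with the continuous integral $\int_{\T^d} \hat G_z(k) e^{-i k \cdot x} (2\pi)^{-d} dk = G_z(x)$, with the aliasing error controlled via Theorem~\ref{thm:saw}.

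The main obstacle I anticipate is step (ii): converting Fourier-space proximity of $\hat G_z\supT$ and $\hat G_z$ into a pointwise lower bound requires taming an oscillatory sum. Here the exponential-times-polynomial form of Theorem~\ref{thm:saw} is essential, because in the regime $\mz r \lesssim 1$ the plateau $\chi(z)/r^d$ and $G_z(x)$ are of the same order at $|x| \asymp r$; a purely polynomial bound $G_z(x) \lesssim \xvee^{-(d-2)}$ would not give enough aliasing control to separate these two quantities, and this is precisely why the condition $|x| \ge M$ and the upper endpoint $z \le z_c - c_4 r^{-d/2}$ (which forces $\chi(z) \le r^{d/2}$) both enter the hypothesis.
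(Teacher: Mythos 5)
Your upper bound is the paper's argument: unfold torus self-avoiding walks to $\Z^d$, giving $G_z\supT(x) \le \sum_{n}G_z(x+rn)$, bound the $n\ne 0$ tail using Theorem~\ref{thm:saw}, and convert $r^{-(d-2)}$ to $\chi(z) r^{-d}$ via $\mz^{-2}\lesssim \chi(z)$. That part is fine.

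The lower bound, however, takes a genuinely different route that has two concrete gaps. You propose to Fourier-invert on $\T^d_r$ and split into the zero mode $\chi\supT(z)/r^d$ and the rest. Step (i), $\chi\supT(z)\gtrsim\chi(z)$, is asserted to follow ``by comparing a torus lace expansion with its $\Z^d$ counterpart'' but this is precisely the machinery (the torus lace expansion of \cite{BCHSS05b}) that this line of work is designed to avoid, and the inequality is far from automatic: unfolding shows only $\chi\supT(z)\le\chi(z)$, so a matching lower bound requires genuine control of the finite-volume model. Step (ii) asks to show the nonzero-mode sum is within $o(\chi(z)/r^d)$ of $G_z(x)$, which, as you yourself note, requires comparing $\hat G_z\supT(k)$ with $\hat G_z(k)$; no mechanism is offered. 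Note also that $G_z\supT(x)$ is \emph{not} equal to the Poisson sum $\sum_n G_z(x+rn)$ (only $\le$), so one cannot simply import the framework of \cite{LPS25-universal} for random-walk-representation models without separately controlling the defect between the torus walk and its unfolding. The paper instead works entirely in real space: it writes $G_z\supT(x)-G_z(x) = \psi\subrz(x) - \big(\psi\subrz(x)-\psi\subrz\supT(x)\big)$ with $\psi\subrz(x)=\sum_{u\ne0}G_z(x+ru)$, lower-bounds the Poisson tail $\psi\subrz(x)\ge c_\psi\chi(z)/r^d$ by a differential inequality (Lemma~\ref{lem:psi_lb}, cited from \cite{Slad23_wsaw}), and upper-bounds the defect $\psi\subrz-\psi\subrz\supT$ by a direct count of $\Z^d$ walks that acquire \emph{new} self-intersections under the projection $\pi_r$, yielding
\begin{equation*}
\psi\subrz(x)-\psi\subrz\supT(x) \lesssim \frac{\chi(z)}{r^d}\Big(G_z*G_z(x) + \frac{\chi(z)^2}{r^d}\Big).
\end{equation*}
The hypotheses $|x|\ge M$ and $z\le z_c-c_4 r^{-d/2}$ then make both $G_z*G_z(x)\lesssim M^{-(d-4)}$ and $\chi(z)^2/r^d\lesssim c_4^{-2}$ small, so the defect is at most $\tfrac12 c_\psi\chi(z)/r^d$, closing the argument. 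You correctly anticipated \emph{that} these two hypotheses should matter and roughly \emph{why}, but your Fourier approach does not supply the quantitative mechanism by which they enter.
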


Note the interval for $z$ is nonempty when $r$ is sufficiently large since $d>4$. 
At $z=z_c$, since $\chi(z_c) = \infty$ and $G\supT\crit(x)$ is finite ($G_z\supT(x)$ is a polynomial of degree at most $V = r^d$ in $z$), the upper bound of \eqref{eq:G^T} holds trivially and the lower bound cannot hold. 
Monte Carlo verification of \eqref{eq:G^T} was carried out in \cite{DGGZ24,ZGFDG18}. 
The next corollary, although less precise, shows more clearly that $G_z\supT(x)$ stops decaying and levels off when $x$ is large. 

\begin{corollary} \label{cor:torus}
Let $d>4$. There are constants $c_3, c_4 >0$ and $r_0 \ge 3$ such that
\begin{equation} \label{eq:G_cor}
G_z\supT(x) \asymp \frac 1 { \nnnorm x^{d-2} } + \frac{ \chi(z) }{ r^d }
\end{equation}
for all $r\ge r_0$, all $x\in \T^d_r$, and all $z\in [ z_c - c_3 r^{-2}, z_c - c_4 r^{-d/2} ]$. 
In particular, 
\begin{equation}
G\crit \supT(x) 
\ge G_{z_c - c_4 r^{-d/2}}\supT(x)
\gtrsim \frac 1 { \nnnorm x^{d-2} } + \frac{ 1 }{ r^{d/2} } .
\end{equation}
\end{corollary}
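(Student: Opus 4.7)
The plan is to derive Corollary~\ref{cor:torus} from Theorem~\ref{thm:torus} and Theorem~\ref{thm:saw}, supplemented by a near-critical lower bound on $G_z(x)$ valid within one correlation length. The upper bound in \eqref{eq:G_cor} is immediate: combining the upper half of \eqref{eq:G^T} with $e^{-c_5 m(z) r} \le 1$ and Theorem~\ref{thm:saw} gives
\[
G_z\supT(x) \le G_z(x) + c_2 \frac{\chi(z)}{r^d} \le \frac{c_0}{\nnnorm x^{d-2}} + c_2 \frac{\chi(z)}{r^d}
\]
for every $r \ge 3$, $x \in \T^d_r$, and $z \in (0, z_c)$.

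For the lower bound in \eqref{eq:G_cor}, I would split on the size of $\abs x$. When $\abs x \ge M$ (with $M$ from Theorem~\ref{thm:torus}), the lower half of \eqref{eq:G^T} reduces the task to showing $G_z(x) \gtrsim \nnnorm x^{-(d-2)}$ for $z$ in the stated range. By monotonicity of $G_z$ in $z$, it suffices to verify this at $z_* := z_c - c_3 r^{-2}$. By \eqref{eq:mass_asymp}, $m(z_*) \asymp r^{-1}$, so identifying $\T^d_r$ with $[-r/2, r/2)^d \cap \Z^d$ gives $m(z_*) \abs x \lesssim 1$, placing $x$ within one correlation length; in this regime I would invoke the near-critical analogue of Hara's asymptotic \eqref{eq:G_crit_asymp} to obtain $G_{z_*}(x) \asymp \abs x^{-(d-2)}$. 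When $\abs x < M$, the right-hand side of \eqref{eq:G_cor} is bounded above (since $\nnnorm x^{-(d-2)} \le 1$, and \eqref{eq:mass_asymp} gives $\chi(z)/r^d \lesssim r^{2-d}$ with $d > 4$), and the matching lower bound $G_z\supT(x) \gtrsim 1$ follows from the contribution of a single SAW of length at most $dM$ from $0$ to $x$, which for $r$ large projects injectively to $\T^d_r$, with $z$ bounded away from $0$. The displayed special case at $z = z_c$ is then immediate from monotonicity: $G\crit\supT(x) \ge G_{z_c - c_4 r^{-d/2}}\supT(x)$, to which \eqref{eq:G_cor} applies, with $\chi(z_c - c_4 r^{-d/2}) \asymp r^{d/2}$ via \eqref{eq:mass_asymp}.

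The principal obstacle is the near-critical lower bound $G_{z_*}(x) \gtrsim \abs x^{-(d-2)}$ within one correlation length. The matching upper bound is exactly Theorem~\ref{thm:saw}, but the lower bound is not directly available from the convolution equation framework of this paper. I would establish it by extending the critical asymptotic analysis of \cite{Hara08, LS24a} to subcritical $z$ through the same convolution equation $F_z * G_z = \delta$ and Assumption~\ref{ass:Fz}: the near-critical structure of $\hat F_z$ near $k = 0$ should ensure that the Fourier integral \eqref{eq:Gint} at $z_*$ is governed on scales $\abs k \gtrsim 1/r$ by the same low-momentum behaviour that produces Hara's critical asymptotic, yielding $G_{z_*}(x) \asymp \abs x^{-(d-2)}$ up to the correlation length.
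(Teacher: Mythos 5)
Your overall architecture agrees with the paper's: split on $|x|\ge M$ versus $|x|<M$, deduce the upper bound of \eqref{eq:G_cor} from the upper bound of \eqref{eq:G^T} combined with $G_z(x)\lesssim\nnnorm x^{-(d-2)}$, deduce the $|x|\ge M$ lower bound from the lower bound of \eqref{eq:G^T} once one knows $G_z(x)\gtrsim\nnnorm x^{-(d-2)}$ in the relevant range, handle small $|x|$ by exhibiting a short self-avoiding path on the torus with $z$ bounded away from $0$, and get the $z_c$ statement by monotonicity together with $\chi(z_c-c_4 r^{-d/2})\asymp r^{d/2}$ from \eqref{eq:mass_asymp}. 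That all matches.

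The genuine gap is exactly where you flag it, and it is not a small one. The near-critical two-sided bound $G_z(x)\asymp\nnnorm x^{-(d-2)}$ for $z\in[z_c-c_3 r^{-2},z_c)$ and $x\in\T^d_r$ — stated as \eqref{eq:G_x} in the paper — is a substantive fact that does not follow from anything already proved in this paper: Theorem~\ref{thm:saw} and \eqref{eq:G_crit_asymp} give only the upper bound, and the Fourier-integral framework of Section~\ref{sec:near_critical} is by design one-sided (it isolates a leading random-walk term but the error $f_z$ is only bounded, not shown to be lower order). The paper does not attempt to derive this lower bound from the convolution equation; it cites the differential-inequality argument of \cite[Remark~6.2]{Slad23_wsaw}, which compares $G_z$ to $G_{z_c}$ by controlling $\partial_z G_z$ over the window $z\in[z_c-c_3 r^{-2},z_c]$, and notes that the argument applies verbatim to strictly self-avoiding walk. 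Your proposed alternative — extracting the lower bound from the low-momentum behaviour of $\hat F_z$ in \eqref{eq:Gint} — is a plausible research direction but is not a proof; in particular, matching lower and upper constants uniformly in $z$ and $|x|$ up to the correlation length would require substantially more than the upper-bound machinery developed here. As written, your argument is incomplete at precisely this step, whereas the paper closes it with a specific external reference. Everything else in your proposal is sound and aligns with the paper's proof.
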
 

The constant $r^{-d/2}$ is believed to be of the correct order for the plateau at $z_c$. No plateau is predicted for the model below the upper critical dimension, nor for the model with free boundary conditions at $z_c$.
We refer to \cite{LPS25-universal, Slad23_wsaw, MPS23} and references therein for more discussions.

\subsection{Organisation}
In Section~\ref{sec:near_critical} we prove Theorem~\ref{thm:near_critical}. The proof uses basic Fourier analysis, product and quotient rules of differentiation, and H\"older's inequality. 
In Section~\ref{sec:saw}, we first recall some lace expansion results from previous works, and then we use them to verify the hypotheses of Theorem~\ref{thm:near_critical}, to prove Theorem~\ref{thm:saw}.
In Section~\ref{sec:torus}, we use a technique called ``unfolding'' that relates torus walks to $\Z^d$ walks, together with Theorem~\ref{thm:saw}, to prove Theorem~\ref{thm:torus} and Corollary~\ref{cor:torus}.

\section{Proof of Theorem~\ref{thm:near_critical}}
\label{sec:near_critical}

\subsection{Preliminaries}
In the proof of Theorem~\ref{thm:near_critical}, we make use of the corresponding result for the simple random walk. 
Let $\mu_c = \abs\Omega\inv = (2d)\inv$ denote its critical point. 
For $\mu \in (0, \mu_c]$, we define the \emph{mass} $m_0(\mu) \ge 0$ to be the unique solution to 
\begin{equation} \label{eq:m0_def}
\cosh m_0(\mu) = 1 + \frac{ 1 - \mu\abs\Omega }{ 2\mu }, 
\end{equation}
so that $m_0(\mu_c) = 0$ and the function $\mu \mapsto m_0(\mu)$ is strictly decreasing. 
It is shown in \cite[Theorem~A.2]{MS93} that $m_0(\mu)$ is the exponential decay rate of $C_\mu(x)$ and $C_\mu(x) \le C_\mu(0) e^{-m_0(\mu) \norm x_\infty}$. 
By inverting and expanding the $\cosh$, we have
\begin{equation}
\label{eq:m0}
m_0(\mu)^2 = \frac{ 1 - \mu \abs\Omega }{ \mu } + O \bigg(\frac{ 1-\mu\abs\Omega}{2\mu} \bigg)^2 
= \abs\Omega ( 1 - \mu \abs\Omega ) + O (1-\mu\abs\Omega)^2 
\end{equation}
as $\mu \to \mu_c = \abs\Omega\inv$.
The following lemma, proved in \cite[Proposition~2.1]{Slad23_wsaw} using heat kernel estimates,
establishes Theorem~\ref{thm:near_critical} for $C_\mu(x)$.

\begin{lemma}
\label{lem:C}
For $d>2$, there are contants $a_0 > 0$ and $a_1 \in (0,1)$ such that for all $\mu \in (0, \mu_c]$ and $x\in \Z^d$,
\begin{equation}\label{eq:Cmu_bd}
C_\mu(x) \le \frac{ a_0 }{ \nnnorm x^{d-2} } e^{-a_1 m_0(\mu) \norm x _ \infty}.
\end{equation}
\end{lemma}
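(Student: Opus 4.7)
\textbf{Proof proposal for Lemma~\ref{lem:C}.}
My plan is to reduce the estimate to the standard Gaussian heat-kernel bound for simple random walk and then evaluate the resulting generating function via a saddle-point analysis, exactly in the spirit of \cite[Proposition~2.1]{Slad23_wsaw}.

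The starting point is the probabilistic representation
\begin{equation*}
C_\mu(x) = \sum_{n=0}^\infty r^n p_n(x), \qquad r := \mu\abs\Omega = \mu/\mu_c \in (0,1],
\end{equation*}
where $p_n(x)=\P(S_n=x)$ is the transition kernel of nearest-neighbour simple random walk (so $p_n(x)=0$ when $n$ and $|x|_1$ have different parity). By the classical Gaussian upper bound for SRW there exist constants $C,c>0$ with $p_n(x) \le C n^{-d/2} \exp(-c|x|^2/n)$ for all $n\ge 1$ and $x\in\Z^d$. Combining this with the expansion \eqref{eq:m0}, which gives $-\log r \asymp m_0(\mu)^2$ for $r$ bounded away from $0$, I get
\begin{equation*}
C_\mu(x) \;\lesssim\; \sum_{n\ge 1} e^{-c' m_0(\mu)^2 n}\, n^{-d/2}\, e^{-c|x|^2/n}
\end{equation*}
(after absorbing the bounded range $r\le 1/2$, where the estimate reduces to ordinary exponential decay, into the final constants).

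Next I would compare the sum with the integral $\int_0^\infty e^{-as-b/s} s^{-d/2}\,ds$ in which $a=c'm_0^2$ and $b=c|x|^2$; this integral is a modified Bessel function $K_{d/2-1}(2\sqrt{ab})$ up to explicit powers of $b/a$. Using the two regimes of the Bessel asymptotics, I treat the cases $m_0|x|\le 1$ and $m_0|x|\ge 1$ separately. In the first regime $2\sqrt{ab}\lesssim 1$ and the small-argument asymptotics of $K_{d/2-1}$ (which applies since $d>2$) give a bound of order $|x|^{-(d-2)}$; since $e^{-a_1 m_0|x|_\infty}\asymp 1$ on this range, the target inequality follows. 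In the second regime the large-argument asymptotics give a bound of the Ornstein--Zernike form $|x|^{-(d-1)/2}m_0^{(d-3)/2} e^{-c_*m_0|x|}$, and I would then sacrifice a small portion of the exponential decay: writing $e^{-c_* m_0|x|}=e^{-(c_*-a_1)m_0|x|}e^{-a_1 m_0|x|}$ and using $(m_0|x|)^{(d-3)/2}\lesssim e^{(c_*-a_1)m_0|x|}$ for any $a_1<c_*$ on the range $m_0|x|\ge 1$, the Ornstein--Zernike prefactor is absorbed to produce $|x|^{-(d-2)} e^{-a_1m_0|x|_\infty}$. Finally, the distinction between $|x|$ and $|x|_\infty$ is handled by applying the argument after reflecting $x$ so that its $L^\infty$-maximal coordinate is distinguished; equivalently, one fixes the Gaussian direction to match $|x|_\infty$ by choosing the axis that achieves $|x|_\infty$ and uses $|x|^2\ge |x|_\infty^2$ when passing to the exponent.

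The only real obstacle is the crossover step at $m_0|x|\asymp 1$: one must verify that the prefactor obtained from the large-$m_0|x|$ Bessel asymptotics is polynomially bounded in $(m_0|x|)$ so that it can be traded against an arbitrarily small fraction of the exponential $e^{-m_0|x|}$, uniformly in $\mu\in(0,\mu_c]$. This is exactly where the restriction $a_1<1$ is used, and it forces the universal constant $c_1\in(0,1)$ in the statement. The very small $|x|$ regime (where $\nnnorm x=1$) is absorbed into $a_0$ using the trivial bound $C_\mu(x)\le C_{\mu_c}(0)<\infty$ for $d>2$.
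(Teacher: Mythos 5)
Your proposal is correct and takes essentially the same route as the paper, which does not give its own proof but cites \cite[Proposition~2.1]{Slad23_wsaw}, whose argument is exactly the heat-kernel bound $p_n(x)\lesssim n^{-d/2}e^{-c|x|^2/n}$ summed against $r^n$ with $-\log r\asymp m_0(\mu)^2$ and evaluated via the Bessel-type integral $\int_0^\infty e^{-as-b/s}s^{-d/2}\,ds$ in the two regimes $m_0|x|\lessgtr 1$. The loose ends you flag (sum-to-integral comparison near the unimodal maximizer, absorbing $r\le\tfrac12$ and $|x|\lesssim 1$ into the constants, and passing from $|x|$ to $\|x\|_\infty$, which is trivial for an upper bound since $\|x\|_\infty\le|x|$) are all routine and do not constitute a gap.
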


To prove Theorem~\ref{thm:near_critical}, we compare $G_z$ to a random walk $C_\mu$ with a good parameter $\mu = \mu_z$, chosen so that the error of such a comparison is small when $x$ is large. 
This isolation of leading term idea is also used in \cite{Slad23_wsaw, LS24a}.

Let $F_z$ satisfy Assumption~\ref{ass:Fz}. 
For $\mu \in (0,\mu_c]$, we define $A_\mu = \delta - \mu\abs\Omega D$,
so that $A_\mu * C_\mu = \delta$ by \eqref{eq:LGF}. 
For any $\lambda\in \R$, we write
\begin{align}
G_z
&= \lambda C_\mu + \delta * G_z - \lambda C_\mu * \delta	\nl
&= \lambda C_\mu + (C_\mu * A_\mu) * G_z - \lambda C_\mu * (F_z * G_z)\nl
&= \lambda C_\mu + C_\mu * E_{z,\lambda,\mu} * G_z,
\end{align}
with $E_{z,\lambda,\mu} = A_\mu - \lambda F_z$.
We choose $\lambda$ and $\mu$ to ensure
\begin{equation} \label{eq:E_condition}
\sum_{x\in \Z^d} E_{z,\lambda,\mu}(x) = \sum_{x\in \Z^d} \abs x^2 E_{z,\lambda,\mu}(x) = 0.	
\end{equation}
This is a system of two linear equations in $\lambda,\mu$, with solution 
\begin{equation} \label{eq:lambda_z}
\lambda_z = \frac{1} { \hat F_z(0) - \sum_{x\in \Z^d} \abs x^2 F_z(x) },
\qquad \mu_z\abs\Omega = 1 - \lambda_z \hat F_z(0).
\end{equation}
It follows from a quick computation that $\lambda_z >0$ and $\muz\in (0, \mu_c]$ (see Lemma~\ref{lem:mass}), so $C_\muz$ is well-defined. 
With this choice of $\lambda_z, \mu_z$, we decompose $G_z$ as
\begin{equation}  \label{eq:G_isolate}
G_z = \lambda_z C_\muz + f_z,
\qquad
f_z =  C_\muz * E_z * G_z,
\end{equation}
where $E_z = E_{z,\lambda_z, \muz}$. 
Multiplying by $e^{mx_1}$ then gives the tilted equation
\begin{equation} \label{eq:Gm_decomp}
G_z\supm = \lambda_z C_\muz\supm + f_z\supm,
\qquad
f_z\supm =  C_\muz\supm * E_z\supm * G_z\supm. 
\end{equation}

The next proposition provides the key estimate on the remainder $f_z\supm$ in the Fourier space. 
For a function $g : \T^d \to \C$
and $p \in [1,\infty)$ we denote $L^p(\T^d)$ norms with a subscript $p$:
\begin{equation}
    \norm g _p = \biggl( \int_{\T^d} |g(k)|^p \frac{dk}{(2\pi)^d} \bigg)^{1/p},
\end{equation}
and we write $\norm g_\infty$ for the supremum norm.
Given a multi-index $\alpha = (\alpha_1, \dots, \alpha_d) \in \N_0^d$ and $x\in \R^d$, 
we write $\abs \alpha = \sum_{i=1}^d \alpha_j$, $x^\alpha = \prod_{i=1}^d x_i^{\alpha_i}$, and $\grad^\alpha = \prod_{i=1}^d \grad_i^{\alpha_i}$.

\begin{proposition} \label{prop:f}
Under Assumption~\ref{ass:Fz}, 
there are constants $\delta_0 >0$ and $\sigma_0 \in (0,1)$
such that, for all $z \in [ z_c - \delta_0, z_c )$ and $m \in [0, \sigma_0 \mz]$, 
the function $\hat f_z\supm = \hat C_\muz\supm \hat E_z\supm \hat G_z\supm$ is $d-2$ times weakly differentiable.
Moreover, we have 
\begin{equation} \label{eq:f_alpha}
\bignorm{ \grad^\alpha \hat f_z\supm }_1 \lesssim 1
	\qquad (\abs \alpha \le d-2)
\end{equation}
with the constant independent of $z,m$.
\end{proposition}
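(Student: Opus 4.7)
The plan is to bound $\bignorm{\grad^\alpha \hat f_z\supm}_1$ for $\abs \alpha \le d-2$ by distributing derivatives across the product $\hat C_\muz\supm \hat E_z\supm \hat G_z\supm$ via the Leibniz rule, expressing derivatives of the reciprocals $\hat G_z\supm = 1/\hat F_z\supm$ and $\hat C_\muz\supm = 1/\hat A_\muz\supm$ via the quotient rule, and controlling each resulting term through H\"older's inequality on $\T^d$. A preliminary step, which I would record as a separate lemma, is to verify that $\muz \in (0, \mu_c]$ and $\lambda_z > 0$ for $z$ near $z_c$ (so $C_\muz$ is well-defined and subcritical), and that the two masses $\mz$ and $m_0(\muz)$ are comparable. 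This comparability lets us pick $\sigma_0 \in (0,1)$ so that, whenever $m \le \sigma_0 \mz$, both $\hat F_z\supm$ and $\hat A_\muz\supm$ remain strictly positive on $\T^d$, uniformly in $z$ close to $z_c$.

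The key pointwise ingredients I would then establish are: (a) lower bounds $\hat F_z\supm(k) \gtrsim \hat F_z\supm(0) + K_2\abs k^2$ and its analogue $\hat A_\muz\supm(k) \gtrsim \hat A_\muz\supm(0) + c\abs k^2$, coming from Assumption~\ref{ass:Fz}(iii) and the explicit random walk expression after accounting for the tilt, together with quantitative lower bounds on $\hat F_z\supm(0)$ and $\hat A_\muz\supm(0)$ uniform for $m \le \sigma_0 \mz$; (b) pointwise derivative bounds $\abs{\grad^\beta \hat F_z\supm}, \abs{\grad^\beta \hat A_\muz\supm} \lesssim 1$ for $\abs \beta \le 2$, from the moment Assumption~\ref{ass:Fz}(i) and the nearest-neighbour support of $D$; (c) higher-order weak derivative estimates on $\hat F_z\supm$ up to order $d-2$ in $L^{(p\wedge 2)'}(\T^d)$, obtained by applying Hausdorff--Young (Plancherel when $p\wedge 2 = 2$) to the $\ell^{p\wedge 2}$ bound of Assumption~\ref{ass:Fz}(v); and (d) the effective vanishing $\abs{\hat E_z\supm(k)} \lesssim \abs k^{2+\eps}$ at small $k$, derived from \eqref{eq:E_condition}, the $\Z^d$-symmetry that kills the first two moments of $E_z$, and the $\abs x^{2+\eps}$ moment bound.

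With these in hand, each term produced by Leibniz plus the quotient rule has the schematic form of a product of derivatives of $\hat E_z\supm, \hat F_z\supm, \hat A_\muz\supm$ divided by powers of $\hat F_z\supm \hat A_\muz\supm$. I would bound each such term in $L^1(\T^d)$ via H\"older, splitting into factors in $L^{p_i}(\T^d)$ with $\sum_i 1/p_i = 1$ and choosing exponents so that the $\abs k^{-2}$ singularities from the denominators sit in integrable $L^{p_i}$ norms (requiring $2 p_i < d$ factor by factor) and so that the $\abs k^{2+\eps}$ vanishing of $\hat E_z\supm$ absorbs one additional $\abs k^{-2}$. The constraint $p < d/4$ in Assumption~\ref{ass:Fz}(v) enters precisely when $\abs \alpha = d-2$ and many derivatives accumulate on a single factor of $1/\hat F_z\supm$, forcing us to replace the pointwise bound on $\grad^\beta \hat F_z\supm$ by the weak-derivative $L^{(p\wedge 2)'}$ bound.

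The hardest part will be keeping the estimates uniform in $m \in [0, \sigma_0 \mz]$. At $m = 0$ the cancellations at $k = 0$ from \eqref{eq:E_condition} are exact and the argument closely follows \cite{LS24a}. For $m > 0$, writing the tilt as a complex shift $\hat E_z\supm(k) = \hat E_z(k - i m e_1)$ shifts the effective zero of $\hat E_z\supm$ off the real axis, weakening its vanishing at $k = 0$, and the denominator values $\hat F_z\supm(0)$ and $\hat A_\muz\supm(0)$ become small but nonzero. I would accommodate this by splitting the integration region at $\abs k = \mz$: on $\abs k \le \mz$ the uniform lower bounds for $\hat F_z\supm(0)$ and $\hat A_\muz\supm(0)$ keep the integrand controlled without relying on the $\abs k^2$ term, while on $\abs k \ge \mz$ the $\abs k^2$ term dominates and the analysis reduces essentially to the untilted case. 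Verifying that the resulting H\"older constants are independent of $m$ throughout, while bookkeeping which factor in each Leibniz term absorbs which singularity, is the central technical obstacle.
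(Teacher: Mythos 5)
Your outline follows the same route as the paper: decompose $\hat f_z\supm = \hat E_z\supm/(\hat A_\muz\supm \hat F_z\supm)$, apply Leibniz and quotient rules, and control each resulting factor via H\"older using massive infrared lower bounds for the denominators, moment-derived derivative bounds on $\hat A,\hat F$, and the enhanced vanishing of $\hat E$. Your preliminary lemma corresponds to Lemma~\ref{lem:mass}, and your ingredients (a), (b), (c), (d) correspond to Lemmas~\ref{lem:massive_infrared}, \ref{cor:massive_infrared}, \ref{lem:F}, and~\ref{lem:E} respectively.

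The one place you should tighten is (d). As you anticipate in your final paragraph, $\hat E_z\supm(0)=\sum_x E_z(x)(\cosh(mx_1)-1)\ne 0$ for $m>0$, so $\abs{\hat E_z\supm(k)}\lesssim \abs k^{2+\eps}$ is false as stated. Your proposed region split at $\abs k = \mz$ is workable but unnecessary. The paper instead proves the unified estimate $\abs{\grad^\gamma\hat E_z\supm(k)}\lesssim(\abs k+m)^{2+(\eps\wedge 1)-\abs\gamma}$ for $\abs\gamma\le 2$ (Lemma~\ref{lem:E}); the linchpin is that $\hat E_z\supm(0)=O(m^{2+\eps})$ because \eqref{eq:E_condition} together with $\Z^d$-symmetry kill \emph{all} moments of $E_z$ up to order $2$ (this is precisely what the choice of $\lambda_z,\mu_z$ buys). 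Paired with the massive infrared bounds $\abs{\hat F_z\supm},\abs{\hat A_\muz\supm}\gtrsim(\abs k+m)^2$, the $m$-dependence cancels in the ratio $\grad^\gamma\hat E\supm/(\hat A\supm\hat F\supm)\lesssim(\abs k+m)^{-(2-\eps+\abs\gamma)}\le\abs k^{-(2-\eps+\abs\gamma)}$, and the rest of the H\"older bookkeeping then proceeds exactly as in the untilted case with no case split. A second, smaller point on (c): Hausdorff--Young applied to Assumption~\ref{ass:Fz}(v) gives the right $L^q$ space only at $\abs\gamma=d-2$; for $3\le\abs\gamma<d-2$ you must interpolate (pointwise H\"older in $x$) between the $\ell^1$ bound on $\abs x^{2}F_z\supm$ from (i) and the $\ell^{p\wedge 2}$ bound on $\abs x^{d-2}F_z\supm$ from (v), as the paper does in Lemma~\ref{lem:F}.
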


When combined with the following general lemma, which relates the smoothness of a function on $\T^d$ to the decay of its Fourier coefficients, Proposition~\ref{prop:f} gives a good bound on $f_z\supm(x)$. 
The notion of \emph{weak derivative} is useful in relaxing moment assumptions on $F_z\supm(x)$; 
indeed, if the function $\abs x^{d-2} F_z\supm(x)$ in Assumption~\ref{ass:Fz}(v) is in $\ell^1(\Z^d)$, then $\hat f_z\supm$ will be $d-2$ times classically differentiable. 
Weak derivatives are defined by the usual integration by parts formula, and properties of the weak derivative that we use can be found in \cite[Appendix~A]{LS24a}.

\begin{lemma}
\label{lem:fourier}
Let $a,d>0$ be positive integers.
Suppose that $\hat h : \T^d \to \C$ is  $a$ times weakly differentiable,
and let $h : \Z^d \to \C$
be the inverse Fourier transform of $\hat h$.
There is a constant $c_{d,a}$ depending only on the dimension $d$ and the
order $a$ of differentiation, such that
\begin{equation}
\label{eq:Graf}
    |h(x)|
    \le
    c_{d,a} \frac{1}{\xvee^a}
    \max_{|\alpha| \in \{0,a\}}\|\grad^\alpha \hat h\|_1 .
\end{equation}
\end{lemma}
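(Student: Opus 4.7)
The plan is a standard Fourier-analytic argument combining the trivial $L^\infty$-bound on the inverse Fourier transform with integration by parts on $\T^d$, split according to whether $|x| \le 1$ or $|x| > 1$. In the first regime one takes the trivial bound; in the second one trades polynomial decay in $x$ against differentiability of $\hat h$, one derivative at a time, picking the coordinate direction along which $x$ is largest.

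For $|x| \le 1$, one has $\xvee = 1$, and the inverse Fourier representation gives immediately $|h(x)| \le \|\hat h\|_1$, which has the form required by the $|\alpha|=0$ term in the maximum on the right-hand side.

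For $|x| > 1$, choose the coordinate $j \in \{1,\dots,d\}$ realising $\|x\|_\infty = |x_j|$, so that $|x_j| \ge |x|/\sqrt d$. Since $\partial_{k_j}^{a} e^{-ik\cdot x} = (-i x_j)^{a} e^{-i k\cdot x}$, integrating by parts $a$ times against the weakly differentiable function $\hat h$ on the boundaryless torus $\T^d$ yields
\begin{equation}
(-i x_j)^{a}\, h(x) \;=\; (-1)^{a} \int_{\T^d} \bigl(\partial_{k_j}^{a} \hat h\bigr)(k)\, e^{-i k\cdot x}\, \frac{dk}{(2\pi)^d}.
\end{equation}
Taking absolute values and using $|x_j|^{-a} \le d^{a/2} |x|^{-a}$ gives
\begin{equation}
|h(x)| \;\le\; d^{a/2}\, |x|^{-a}\, \bigl\|\partial_{k_j}^{a}\hat h\bigr\|_1 \;\le\; d^{a/2}\, |x|^{-a} \max_{|\alpha|=a}\|\grad^{\alpha}\hat h\|_1.
\end{equation}

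Combining the two regimes, and replacing $|x|$ by $\xvee$ (which costs only a harmless constant), one obtains the stated bound with $c_{d,a} = d^{a/2}\vee 1$. The only point requiring a moment's care is the justification of the integration by parts in the weak-derivative setting, but this is precisely the defining property of weak derivatives on $\T^d$ recorded in \cite[Appendix~A]{LS24a}: the identity holds when paired against any $C^\infty$ test function, and $k \mapsto e^{-ik\cdot x}$ is smooth on $\T^d$. I therefore do not anticipate a substantial obstacle; the lemma is a self-contained piece of Fourier analysis on the torus whose role is simply to convert the control on $\|\grad^{\alpha} \hat f_z\supm\|_1$ supplied by Proposition~\ref{prop:f} into pointwise decay of $f_z\supm$.
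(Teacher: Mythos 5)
Your proof is correct and is essentially the argument underlying the paper's one-line proof, which simply cites \cite[Corollary~3.3.10]{Graf14} and notes that the Grafakos proof goes through for weak derivatives; you have unpacked that citation into the standard split between $|x|\le 1$ (trivial $L^1\to L^\infty$ bound) and $|x|>1$ (repeated integration by parts in the dominant coordinate direction). One small point worth being explicit about: iterating integration by parts ``one derivative at a time'' uses that $\hat h$ has weak derivatives of every intermediate order, which is what ``$a$ times weakly differentiable'' means here, and the test function $k\mapsto e^{-ik\cdot x}$ is indeed a legitimate smooth periodic test function on $\T^d$ precisely because $x\in\Z^d$; with that noted, the argument is complete.
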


\begin{proof}
The bound \eqref{eq:Graf} follows from
\cite[Corollary~3.3.10]{Graf14}.
Although stated for classical derivatives, the proof of \cite[Corollary~3.3.10]{Graf14}
applies for weak derivatives since it only uses integration by parts.
\end{proof}

We first prove Theorem~\ref{thm:near_critical} assuming Proposition~\ref{prop:f}.
We also need the following conclusion of Lemma~\ref{lem:mass}:
under Assumption~\ref{ass:Fz} there are constants $\delta_1 >0$ and $\sigma_1 \in (0,1)$ such that for all $z \in [z_c - \delta_1, z_c)$,
\begin{equation} \label{eq:mass_claim}
\lambda_z = O(1), \qquad
\sigma_1 m(z) \le a_1 m_0(\mu_z),
\end{equation}
where $a_1$ is the constant in \eqref{eq:Cmu_bd}.

\begin{proof}[Proof of Theorem~\ref{thm:near_critical}]
We take $\delta = \delta_0 \wedge \delta_1$ and 
$\sigma = \sigma_0 \wedge \sigma_1$,
so that the conclusion of Proposition~\ref{prop:f} holds for all $z \in [z_c - \delta, z_c)$ and all $ m \in [0, \sigma m(z)]$. 
Using Lemma~\ref{lem:fourier} with $a = d-2$, we have $\abs{ f_z\supm(x) } \lesssim \nnnorm x^{-(d-2)}$ with a constant independent of $z,m$. 
From the tilted equation \eqref{eq:Gm_decomp}, we then get
\begin{equation} \label{eq:Gm_decomp2}
G_z\supm(x) = \lambda_z C_\muz\supm(x) + O\bigg( \frac 1 { \nnnorm x^{d-2} } \bigg). 
\end{equation}
Since $m \le \sigma_1 \mz \le a_1 m_0(\muz)$, by Lemma~\ref{lem:C} we have
\begin{equation}
C_\muz\supm(x) = C_\muz(x) e^{mx_1}
\le \frac{ a_0 }{ \nnnorm x^{d-2} } e^{ (-a_1 m_0(\muz) + m) \norm x _ \infty} 
\le \frac{ a_0 }{ \nnnorm x^{d-2} } e^0. 
\end{equation}
Inserting the above and $\lambda_z = O(1)$ into \eqref{eq:Gm_decomp2},
we obtain
\begin{equation} \label{eq:Gm_bd}
G_z(x) e^{mx_1}  = G_z\supm(x) 
\le  O(1) \frac{ a_0 }{ \nnnorm x^{d-2} } + O\bigg( \frac 1 { \nnnorm x^{d-2} } \bigg)
\lesssim \frac 1 {\nnnorm x^{d-2}}.
\end{equation}
By the symmetry of $G_z$, we can assume $x_1 = \norm x_\infty$.
Then, by taking $m=\sigma \mz$ in \eqref{eq:Gm_bd} and by using $\norm x_\infty \ge d^{-1/2} \abs x$, we get
\begin{align}
G_z(x) \lesssim \frac{ 1 }{ \nnnorm x^{d-2} } e^{- \sigma\mz \norm x_\infty}
\le \frac{ 1 }{ \nnnorm x^{d-2} } e^{- (\sigma d^{-1/2} ) \mz \abs x},
\end{align}
which is the desired result with $c_1 = \sigma d^{-1/2}  > 0$. 
\end{proof}

To complete the proof of Theorem~\ref{thm:near_critical},
it remains to prove Proposition~\ref{prop:f} and Lemma~\ref{lem:mass}.

\subsection{Proof of Proposition~\ref{prop:f}}

Proposition~\ref{prop:f} is about derivatives of 
\begin{equation}
\hat f_z \supm =  \hat C_\muz\supm \hat E_z\supm \hat G_z\supm
= \frac{ \hat E_z\supm } { \hat A_\muz \supm \hat F_z \supm }.
\end{equation}
To show the above is $d-2$ times weakly differentiable, we use the product and quotient rules of weak differentiation given in \cite[Lemmas~A.2--A.3]{LS24a}. 
The rules state that if a naive application of the usual calculus rules to compute $\grad^\alpha \hat f_z \supm$ produces integrable results for all $\abs\alpha \le d-2$, then $\hat f_z\supm$ is $d-2$ times weakly differentiable, with derivatives given by the usual calculus rules.
We therefore compute all terms of the form
(omitting subscripts $z, \muz$ and superscript $(m)$)
\begin{equation} \label{eq:f_decomp}
\frac{ \prod_{n=1}^i \grad^{\delta_n} \hat A  }{ (\hat A) ^{1+i} }
\big( \grad^{\alpha_2} \hat E \big)
\frac{ \prod_{l=1}^j \grad^{\gamma_l} \hat F  }{ ( \hat F) ^{1+j} }
=
\( \prod_{n=1}^i \frac{ \grad^{\delta_n} \hat A }{\hat A } \)
\( \frac{ \grad^{\alpha_2} \hat E }{ \hat A \hat F } \)
\( \prod_{l=1}^j \frac{ \grad^{\gamma_l} \hat F  }{ \hat F } \),
\end{equation}
where $\alpha = \alpha_1 + \alpha_2 + \alpha_3$, $ 0 \le i \le \abs {\alpha_1}$, $0 \le j \le \abs {\alpha_3 }$, $\sum_{n=1}^i \delta_n = \alpha_1$, and $\sum_{l=1}^j \gamma_l = \alpha_3$.
A linear combination of these terms will give $\grad^\alpha \hat f_z \supm$. 
We prove $\eqref{eq:f_decomp} \in L^1(\T^d)$ using H\"older's inequality and the following lemma. 

\begin{lemma} \label{lem:AFEbds}
Under Assumption~\ref{ass:Fz}, 
there are constants $\delta_0 >0$, $\sigma_0 \in (0,1)$, and $\eps' \in (0, \eps \wedge 1]$
such that, for all $z \in [ z_c - \delta_0, z_c )$ and $m \in [0, \sigma_0 \mz]$,
the functions $\hat F_z\supm, \hat A_\muz\supm, \hat E_z\supm$ are $d-2$ times weakly differentiable.
Moreover, for any $\abs \gamma \le d-2$ and
any choice of $q_1, q_2$ satisfying 
\begin{equation}
    q_1\inv > \frac{ \abs \gamma } d,
    \qquad
    q_2\inv > \frac{ 2 - \eps' + \abs \gamma } d, 
\end{equation}
we have 
\begin{equation}
\label{eq:AFE}
\biggnorm{ \frac{ \grad^\gamma \hat A\supm_\muz }{ \hat A\supm_\muz } }_{q_1}, \;
\biggnorm{ \frac{ \grad^\gamma \hat F_z\supm }{ \hat F_z\supm } }_{q_1}, \;
\biggnorm{ \frac{ \grad^\gamma \hat E_z\supm }{ \hat A_\muz\supm \hat F_z\supm }}_{q_2}
\lesssim 1
\end{equation} 
with the constants independent of $z, m$. 
\end{lemma}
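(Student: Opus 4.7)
The plan is to verify the weak differentiability and the $L^{q_1}, L^{q_2}$ bounds in three steps, relying on Assumption~\ref{ass:Fz}, the Hausdorff--Young inequality, and a tilted analogue of the infrared lower bound~\ref{ass:Fz}(iii).

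First, I would establish $(d-2)$-fold weak differentiability of $\hat F_z\supm$, $\hat A_\muz\supm$, and $\hat E_z\supm$. For $\hat A_\muz\supm$ this is immediate since $D$ has finite support, making $\hat A_\muz\supm$ a trigonometric polynomial. For $\hat F_z\supm$, I would split into $\abs\gamma \le 2+\eps$, where Assumption~\ref{ass:Fz}(i) gives $\sum_x \abs x^{\abs\gamma} \abs{F_z\supm(x)} \le K_1$ and hence a classical continuous derivative, and $\abs\gamma \in (2+\eps, d-2]$, where on $\abs x \ge 1$ we bound $\abs x^{\abs\gamma} \le \abs x^{d-2}$ and invoke Assumption~\ref{ass:Fz}(v) to conclude $\abs x^{\abs\gamma} F_z\supm \in \ell^{p\wedge 2}(\Z^d)$; Hausdorff--Young then produces $\grad^\gamma \hat F_z\supm$ as a weak derivative in $L^{q_*}(\T^d)$, where $q_*$ denotes the conjugate exponent of $p\wedge 2$. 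The identity $\hat E_z\supm = \hat A_\muz\supm - \lambda_z \hat F_z\supm$ transfers both conclusions to $\hat E_z\supm$.

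Second, I would derive lower bounds $\abs{\hat A_\muz\supm(k)}, \abs{\hat F_z\supm(k)} \gtrsim \abs k^2$ on $\T^d$, uniformly in $z$ and in $m \in [0,\sigma_0 \mz]$. For $\hat A_\muz\supm$, a direct computation based on \eqref{eq:m0_def} shows that when $m < m_0(\muz)$ the constant term $1 - \muz\abs\Omega \hat D\supm(0)$ of the real part is strictly positive, and combined with $\hat D\supm(0) - \Re \hat D\supm(k) \gtrsim \abs k^2$ this yields the claim; the constraint $m \le \sigma_0 \mz < m_0(\muz)$ follows from \eqref{eq:mass_claim} after choosing $\sigma_0$ sufficiently small. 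For $\hat F_z\supm$, I would extend Assumption~\ref{ass:Fz}(iii) to small tilts using the decomposition $e^{mx_1} = \cosh(mx_1) + \sinh(mx_1)$ and the $\Z^d$-symmetry of $F_z$ (which kills the $\sinh$ part in the real part) to show that $\Re \hat F_z\supm(k) = \hat F_z(k) + O(m^2)$ with the error uniform in $k$; the untilted infrared bound then dominates for sufficiently small $\sigma_0$ when $\abs k$ is not too small, while for $\abs k$ very small we use $\hat F_z\supm(0) \ge 0$ from Assumption~\ref{ass:Fz}(ii) together with a second-order expansion.

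Third, I would combine these with H\"older's inequality. For the numerators, Assumption~\ref{ass:Fz}(i) gives $\grad^\gamma \hat F_z\supm \in L^\infty$ when $\abs\gamma \le 2+\eps$, while for larger $\abs\gamma$ Hausdorff--Young yields an $L^{q_*}$ bound; the $\Z^d$-symmetry of $F_z$ combined with a first- or second-order expansion gives the vanishing $\abs{\grad^\gamma \hat F_z\supm(k)} \lesssim \abs k^{(2-\abs\gamma)_+}$ near $k=0$ up to corrections of order $m$. Pairing with $\abs{\hat F_z\supm(k)}\inv \lesssim \abs k^{-2}$ via H\"older produces $\grad^\gamma \hat F_z\supm / \hat F_z\supm \in L^{q_1}(\T^d)$ precisely when $q_1\inv > \abs\gamma/d$; the same argument works for $\hat A_\muz\supm$. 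The main obstacle is the bound on $\grad^\gamma \hat E_z\supm / (\hat A_\muz\supm \hat F_z\supm)$: here the denominator vanishes like $\abs k^4$ near the origin, so the strengthened exponent $q_2\inv > (2-\eps'+\abs\gamma)/d$ requires the numerator to vanish like $\abs k^{2+\eps'-\abs\gamma}$, which relies on the defining property \eqref{eq:E_condition} of $\lambda_z,\muz$ forcing $\hat E_z(k) = O(\abs k^{2+\eps'})$ near $k=0$ by a Taylor expansion with $\eps' \in (0, \eps\wedge 1]$ the remainder exponent available from Assumption~\ref{ass:Fz}(i). Tracking this cancellation through the tilt $e^{mx_1}$ and through the switch between classical $L^\infty$ estimates and Hausdorff--Young $L^{q_*}$ estimates while keeping uniformity in $z$ and $m$ is the most delicate bookkeeping in the proof.
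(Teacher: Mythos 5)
Your proposal captures the right overall architecture (massive infrared bounds for the denominators, moment-based bounds for the numerators, H\"older), but it has two concrete gaps, each of which would prevent the argument from closing.

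First, for intermediate derivative orders the crude bound $\abs x^{\abs\gamma}\le\abs x^{d-2}$ followed by Hausdorff--Young is not strong enough. You get $\grad^\gamma\hat F_z\supm\in L^{q_*}$ with $q_*^{-1}=1-(p\wedge 2)^{-1}$, and after H\"older against $1/\hat F_z\supm\in L^s$ (any $s^{-1}>2/d$) you achieve the target $L^{q_1}$ only when $q_*^{-1}\le(\abs\gamma-2)/d$. Since Assumption~\ref{ass:Fz}(v) only guarantees $p<d/4$, $q_*^{-1}$ can be as large as roughly $(d-4)/d$, so the inequality holds only for $\abs\gamma$ near $d-2$ and fails for intermediate $\abs\gamma$ (for instance $d=9$, $p=2.2$, $\abs\gamma=4$ already fails). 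The paper's Lemma~\ref{lem:F} avoids this by interpolating: it writes $\abs x^{b}F = (\abs x^2 F)^{\frac{a-b}{a-2}}(\abs x^{a}F)^{\frac{b-2}{a-2}}$ with $a=d-2$, $b=\abs\gamma$, so that the $\ell^1$ control from Assumption~(i) and the $\ell^{p\wedge 2}$ control from Assumption~(v) combine to place $\grad^\gamma\hat F_z\supm$ in $L^{d/(\abs\gamma-2-\eps'')}$ for some $\eps''>0$. Without this interpolation the numerator estimate is too weak in the middle range.

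Second, the infrared bound you claim, $\abs{\hat F_z\supm(k)},\abs{\hat A_\muz\supm(k)}\gtrsim\abs k^2$, is not sufficient for the $\hat E$-ratio. You rely on $\hat E_z(k)=O(\abs k^{2+\eps'})$, but that is the \emph{untilted} function; the tilted $\hat E_z\supm$ does not vanish at $k=0$, as $\hat E_z\supm(0)=\sum_x E_z(x)\cosh(mx_1)-\sum_x E_z(x)=O(m^{2+\eps\wedge 1})\ne 0$. At $k=0$ your denominator lower bound degenerates, and the ratio is uncontrolled. What is needed is the quantitative \emph{massive} infrared bound $\abs{\hat F_z\supm(k)}\gtrsim(\abs k+\mz)^2\ge(\abs k+m)^2$, in particular the lower bound $\hat F_z\supm(0)\gtrsim\mz^2$. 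This is not a consequence of $\hat F_z\supm(0)\ge 0$ plus a Taylor expansion; the paper proves it by comparing $\hat F_z\supm(0)$ with $\hat F_z^{(M)}(0)$ at an auxiliary tilt $M=\sigma'\mz\in(m,\mz)$, which is a genuine additional idea. With this in hand the numerator estimate $\abs{\grad^\gamma\hat E_z\supm(k)}\lesssim(\abs k+m)^{2+\eps\wedge 1-\abs\gamma}$ (Lemma~\ref{lem:E}) divides against the denominator $(\abs k+m)^4$ uniformly. You flag the tilt bookkeeping as delicate, but the specific mechanism --- that the nonvanishing of $\hat E_z\supm$ at $k=0$ must be matched by an $m$-dependent lower bound on $\hat F_z\supm$ --- is the crux, and it is not supplied in your outline.
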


\begin{proof}[Proof of Proposition~\ref{prop:f} assuming Lemma~\ref{lem:AFEbds}]
We need to show all terms of the form \eqref{eq:f_decomp} are integrable.
By Lemma~\ref{lem:AFEbds} and 
H\"older's inequality, the $L^r(\T^d)$ norm of \eqref{eq:f_decomp} is bounded by
\begin{equation} \label{eq:Holder_pf}
\( \prod_{n=1}^i \biggnorm{ \frac{ \grad^{\delta_n} \hat A }{\hat A }}_{r_n} \)
\biggnorm{ \frac{ \grad^{\alpha_2} \hat E }{ \hat A \hat F } }_q
\( \prod_{l=1}^j \biggnorm{ \frac{ \grad^{\gamma_l} \hat F  }{ \hat F }}_{q_l} \),
\end{equation}
where $r$ can be any positive number that satisfies
\begin{align}
\frac 1 r &= \( \sum_{n=1}^i \frac 1 {r_n} \) + \frac 1 q + \( \sum_{l=1}^j \frac 1 {q_l} \) \nl
&> \frac{ \sum_{n=1}^i \abs{\delta_n}  } d  + \frac{ 2 - \eps' + \abs{\alpha_2}   } d + \frac{ \sum_{l=1}^j \abs{\gamma_l}  } d
= \frac{  \abs \alpha  + 2 - \eps' } d.
\end{align}
Since $\abs \alpha \le d-2$ and $\eps' > 0$,
the allowed values of $r$ includes $r = 1$.
This shows that $\eqref{eq:f_decomp}\in L^1(\T^d)$ and proves that $\hat f_z\supm$ is $d-2$ times weakly differentiable. 
Moreover, 
since $\norm{ \grad^\alpha \hat f_z\supm}_1$ is bounded by a linear combination of terms like \eqref{eq:Holder_pf}, and since all factors of \eqref{eq:Holder_pf} are bounded by constants independent of $z,m$ by \eqref{eq:AFE}, we get a uniform bound on $\norm{\grad^\alpha  \hat f_z\supm}_1$, as desired.
\end{proof}

In the proof of Lemma~\ref{lem:AFEbds}, we will use the following lemma to estimate derivatives of order larger than~2. The proof uses boundedness of the Fourier transform and H\"older's inequality on $\Z^d$. 

\begin{lemma}\label{lem:F}
Under Assumption~\ref{ass:Fz}(i) and (v), the function $\hat F_z\supm$ is $(d-2)\vee 2$ times weakly differentiable, and there exists $\eps'' \in (0,1]$ such that
\begin{alignat}{2} \label{eq:F_gamma_2}
&\bignorm{ \grad^\gamma \hat F_z\supm }_{ \infty } 
	\lesssim 1  	\qquad &&( \abs \gamma = 2 ), \\
&\bignorm{ \grad^\gamma \hat F_z\supm }_{ \frac d {\abs \gamma - 2 - \eps''} }
	\lesssim 1 	\qquad &&(3 \le \abs \gamma \le d-2 ), 
	\label{eq:F_gamma_3}
\end{alignat}
with the constants independent of $z, m$. 
\end{lemma}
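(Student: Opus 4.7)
The plan is to handle $|\gamma| = 2$ directly from Assumption~\ref{ass:Fz}(i), and to treat $3 \le |\gamma| \le d-2$ via the Hausdorff--Young inequality (or Plancherel, when the target exponent is small) combined with a H\"older interpolation between Assumptions~\ref{ass:Fz}(i) and~(v).

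For $|\gamma| = 2$, Assumption~\ref{ass:Fz}(i) gives $\sum_x |x|^2 |F_z\supm(x)| \le K_1$ uniformly, so differentiating the Fourier series $\hat F_z\supm(k) = \sum_x F_z\supm(x) e^{ik\cdot x}$ termwise produces an absolutely and uniformly convergent series whose supremum bound yields \eqref{eq:F_gamma_2}; this simultaneously identifies $\nabla^\gamma \hat F_z\supm$ with the classical derivative.

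For $3 \le |\gamma| \le d-2$, set $h(x) = (ix)^\gamma F_z\supm(x)$, so that formally $\nabla^\gamma \hat F_z\supm = \hat h$. The strategy is to bound $\|h\|_{\ell^{q'}(\Z^d)}$ uniformly in $z,m$, where $q' = q/(q-1) = d/(d-|\gamma|+2+\varepsilon'')$ is the Hausdorff--Young conjugate of $q$, and then deduce the $L^q$ bound on $\hat h$ by Hausdorff--Young when $q'\le 2$, or by Plancherel combined with the embedding $L^2(\T^d) \hookrightarrow L^q(\T^d)$ when $q' > 2$ (using that $h \in \ell^2$ thanks to the pointwise bound $|x^\gamma F| \le |x|^{d-2}|F|$ together with Assumption~\ref{ass:Fz}(v) and $\ell^s \subset \ell^2$ for $s = p\wedge 2$). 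The $\ell^{q'}$ bound itself is obtained by writing $|x|^{|\gamma| q'}|F|^{q'} = [|x|^{2+\varepsilon'}|F|]^{\alpha} [|x|^{d-2}|F|]^{\beta}$ for some $\varepsilon' \in (0, \varepsilon \wedge 1]$ and the unique $\alpha,\beta \ge 0$ matching the exponents, then applying H\"older's inequality with conjugate exponents $u,v$ satisfying $\alpha u \ge 1$ and $\beta v \ge s$, which uses the uniform bounds on $|x|^{2+\varepsilon'}|F|$ and $|x|^{d-2}|F|$ in $\ell^1$ and $\ell^s$ respectively.

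The main technical step is verifying the H\"older feasibility condition $\alpha + \beta/s \ge 1$ uniformly over integer $|\gamma| \in \{3, \ldots, d-2\}$. After substituting the explicit values of $\alpha, \beta, q'$ and simplifying, this inequality at the limit $\varepsilon' = \varepsilon'' = 0$ reduces to $(|\gamma|-2)(d - 4s) \ge 0$, which holds strictly for $|\gamma| \ge 3$ because $s = p \wedge 2 < d/4$ by Assumption~\ref{ass:Fz}(v). By continuity and the finiteness of the index set $\{3,\ldots,d-2\}$, the strict inequality persists for some $\varepsilon',\varepsilon'' > 0$ chosen small enough; fixing such an $\varepsilon''$ completes the proof of \eqref{eq:F_gamma_3}. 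Weak differentiability of $\hat F_z\supm$ then follows from the $L^q$-convergence of the truncated derivative series to $\hat h$, which identifies $\hat h$ as the $|\gamma|$-th weak derivative of $\hat F_z\supm$ via integration by parts against test functions.
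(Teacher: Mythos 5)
Your proof is correct and follows essentially the same approach as the paper: for $|\gamma|=2$ one uses Assumption~(i) and the $\ell^1\to L^\infty$ bound, and for $3\le|\gamma|\le d-2$ one H\"older-interpolates on $\Z^d$ between the $\ell^1$ bound from Assumption~(i) and the $\ell^{p\wedge 2}$ bound from Assumption~(v), then applies Hausdorff--Young. The only cosmetic differences are in bookkeeping: the paper computes the natural intermediate $\ell^{p_b}$ exponent and checks $1/q_b<(|\gamma|-2)/d$, whereas you fix the target $L^q$ exponent and verify a H\"older feasibility condition reducing to $(|\gamma|-2)(d-4s)>0$; and you treat the case $q<2$ separately via Plancherel plus the $L^2\hookrightarrow L^q$ embedding, which the paper sidesteps because the interpolated exponent $p_b$ is automatically $\le 2$ when $|\gamma|\le d-2$ and $p\wedge 2<d/4$.
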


\begin{proof}
For simplicity, we write $F = F_z\supm$. All estimates are quantitative and are uniform in $z,m$. 
The claim \eqref{eq:F_gamma_2} on $\abs \gamma=2$ follows directly from Assumption~\ref{ass:Fz}(i) and boundedness of the $L^1$ Fourier transform. 
For $3 \le \abs \gamma \le d-2$, which only happens when $d\ge 5$, 
we use boundedness of the $L^p$ Fourier transform ($1\le p \le 2$). 
Let $a = d-2$, $b = \abs \gamma$. We decompose
\begin{equation}
\abs x^b F(x) =  \big( \abs x^{ 2 } F(x) \big)^{ \frac{a-b}{a-2} } 
	\big( \abs x^{ a } F(x) \big)^{ \frac{b-2}{a-2} }.
\end{equation}
Since $\abs x^2 F(x) \in \ell^1(\Z^d)$ by Assumption~\ref{ass:Fz}(i) and $\abs x^a F(x) \in \ell^{p \wedge 2 }(\Z^d)$ for some $p < d/4$ by Assumption~\ref{ass:Fz}(v), 
it follows from H\"older's inequality that
\begin{equation}
\abs x^b F(x) \in \ell^{p_b \wedge 2}(\Z^d)
\quad \text{with} \quad
\frac 1 {p_b} = \frac{ a-b }{a-2} + \frac { b-2 }{ (a-2)p }. 
\end{equation}
Taking the Fourier transform, we get that the weak derivative $\grad^\gamma \hat F$ exists \cite[Lemma~A.4]{LS24a}, and we get that
\begin{equation}
\grad^\gamma \hat F \in L^{q_b}(\T^d)
\quad \text{with} \quad
\frac 1 {q_b} = 1 - \frac 1 {p_b}
	= \frac { b-2 }{ a-2 } \( 1 - \frac 1 p \)
	< \frac{ b-2}d,
\end{equation}
where the last inequality follows from $p\inv > 4 / d = 1 - (a-2)/d$. 
The desired estimate \eqref{eq:F_gamma_3} then follows by picking $\eps''>0$ small enough so that $\frac{ b - 2 - \eps'' }d > q_b\inv$ for all integers $b \in [3, d-2]$. 
\end{proof}

\subsubsection*{Massive infrared bounds}
The proof of Lemma~\ref{lem:AFEbds} also uses massive infrared bounds for $\hat A_\muz\supm$ and $\hat F_z\supm$. 
We prove the bound for $\hat F_z\supm$ first. 
The next lemma is related to \cite[Lemma~5.2]{Slad23_wsaw}. 

\begin{lemma}[Massive infrared bound for $F_z\supm$]
\label{lem:massive_infrared}
Fix $0 < \sigma < 1$. 
Under Assumption~\ref{ass:Fz}(i)--(iv),
there exists $\delta_2 >0$ such that for all $z \in [ z_c - \delta_2, z_c )$ and $m \in [0,\sigma \mz]$, 
\begin{equation}
\label{eq:Fm_lb}
\abs{ \hat F_z \supm (k) } 
\gtrsim (\abs k + \mz )^2
\gtrsim (\abs k  + m)^2,
\end{equation}
with the constant independent of $z,m$ but depends on $\sigma$. 
\end{lemma}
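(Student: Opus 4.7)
The plan is to compare $\hat F_z\supm(k)$ to its second-order Taylor expansion around $(k, m) = (0, 0)$, handling the small-$(\abs k, m)$ regime via this expansion and the large-$\abs k$ regime by perturbing off the untilted infrared bound. The $\Z^d$ symmetry of $F_z$ kills all first-order derivatives and every mixed second-order derivative except $\partial_m \partial_{k_1}$, and writing $c_z = -\sum_x x_1^2 F_z(x)$ I get $\partial_{k_j}^2 \hat F_z(0) = c_z$, $\partial_m^2 \hat F_z(0) = -c_z$, and $\partial_m \partial_{k_1} \hat F_z(0) = -i c_z$; Assumption~\ref{ass:Fz}(iii) together with the $2+\eps$ moment bound imply $c_z \in [2K_2, K_1]$ uniformly in $z$. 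A direct Taylor expansion then yields
\begin{equation*}
\hat F_z\supm(k) = \hat F_z(0) + \tfrac{c_z}{2}(\abs k^2 - m^2) - i c_z m k_1 + R_z(k, m),
\end{equation*}
with the complex remainder $R_z$ obeying $\abs{R_z(k,m)} \lesssim (\abs k + m)^{2+\eps}$ uniformly in $z,m$. The remainder bound uses the elementary estimate $\bigabs{e^{mx_1 + ikx} - \sum_{n \le 2} (mx_1+ikx)^n/n!} \lesssim (m\abs{x_1}+\abs k\abs x)^{2+\eps} e^{m\abs{x_1}}$ together with the transferred moment bound $\sum_x \abs x^{2+\eps} \abs{F_z(x)} e^{m\abs{x_1}} \lesssim K_1$, which follows from Assumption~\ref{ass:Fz}(i) via the reflection $x_1 \mapsto -x_1$ (under which $F_z$ is invariant).

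Second, I extract a lower bound on $\hat F_z\supm(0)$. Plugging $k=0$ and $m\to m(z)^-$ into the Taylor expansion and using Assumption~\ref{ass:Fz}(ii) gives
\begin{equation*}
0 \le \hat F_z^{(m(z))}(0) = \hat F_z(0) - \tfrac{c_z}{2} m(z)^2 + O(m(z)^{2+\eps}),
\end{equation*}
hence $\hat F_z(0) \ge \tfrac{c_z}{2} m(z)^2 - O(m(z)^{2+\eps})$. Substituting into the expansion at a general $m \in [0, \sigma \mz]$ (using $m^2 \le \sigma^2 \mz^2$) yields
\begin{equation*}
\hat F_z\supm(0) \ge \tfrac{c_z}{2}(1 - \sigma^2) m(z)^2 - O(m(z)^{2+\eps}) \gtrsim (1-\sigma^2) \mz^2,
\end{equation*}
provided $z$ is close enough to $z_c$ that the $O(m(z)^\eps)$ correction is dominated by the gap $1 - \sigma^2$; this is the one place where the constraint $\sigma<1$ matters.

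For the main estimate, I split on $\abs k$ using a small threshold $\rho > 0$. When $\abs k\le \rho$, I take the real part of the Taylor expansion,
\begin{equation*}
\Re \hat F_z\supm(k) = \hat F_z\supm(0) + \tfrac{c_z}{2}\abs k^2 + \Re R_z(k,m),
\end{equation*}
so the mixed $-ic_z m k_1$ term drops out. Both positive contributions $\hat F_z\supm(0) \gtrsim \mz^2$ (from step two) and $\tfrac{c_z}{2}\abs k^2$ are then large enough to absorb $\abs{R_z}\lesssim (\abs k + m)^{2+\eps}$ once $\rho$ and $\mz$ are small, giving $\abs{\hat F_z\supm(k)} \ge \Re \hat F_z\supm(k) \gtrsim \abs k^2 + \mz^2 \asymp (\abs k + \mz)^2$. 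When $\abs k > \rho$, the untilted infrared bound yields $\hat F_z(k) \ge \hat F_z(0) + K_2 \abs k^2 \ge K_2 \rho^2$ (a positive constant), while a crude perturbation estimate $\abs{\hat F_z\supm(k) - \hat F_z(k)} \le \sum_x \abs{F_z(x)}(e^{m\abs{x_1}}-1) \lesssim m \le \sigma\mz$ shows $\abs{\hat F_z\supm(k)}$ remains bounded below by a positive constant for $z$ close enough to $z_c$; since $(\abs k + \mz)^2$ is bounded above uniformly on $\T^d$, the conclusion follows.

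The main obstacle is the extraction of the sharp constant in the lower bound on $\hat F_z\supm(0)$: any loss (even a factor of two) in the leading constant $\tfrac{c_z}{2}$ would restrict $\sigma$ to a range strictly smaller than $\sigma < 1$. One therefore needs to isolate the leading $\tfrac{c_z}{2} \mz^2$ from Taylor's theorem exactly up to an $O(\mz^{2+\eps})$ remainder, and then absorb that remainder into the open gap $1 - \sigma^2$ by picking $z$ close enough to $z_c$. A smaller technical nuisance is converting moment bounds on $F_z\supm$ to bounds on $\abs{F_z} e^{m\abs{x_1}}$, which the coordinate reflection $x_1\mapsto-x_1$ handles cleanly using $\Z^d$-symmetry of $F_z$.
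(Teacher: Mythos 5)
Your proof is correct, and it shares the two essential ideas with the paper's argument: use Assumption~\ref{ass:Fz}(ii) at a tilt near $m(z)$ to convert information at the boundary into the lower bound $\hat F_z\supm(0)\gtrsim(1-\sigma^2)m(z)^2$, and control the $k$-dependence via the untilted infrared bound plus a $2+\eps$ moment estimate. Your Taylor expansion correctly identifies $-ic_z m k_1$ as the only surviving mixed term, and taking the real part kills it, exactly as the paper's use of $\Re\hat F_z\supm$ does implicitly. The technical execution differs in two places worth noting. First, you Taylor-expand jointly in $(k,m)$ around the origin, which is only good for small $|k|$, so you must split $|k|\le\rho$ from $|k|>\rho$ and handle the large-$|k|$ regime by a separate perturbative estimate. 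The paper avoids this split: writing $\Re[\hat F_z\supm(k)-\hat F_z\supm(0)]=[\hat F_z(k)-\hat F_z(0)]+\sum_x(\cos(k\cdot x)-1)F_z(x)(\cosh(mx_1)-1)$ uses the untilted infrared bound \emph{verbatim for all} $k\in\T^d$ and only estimates the $(\cosh-1)$ correction, which automatically carries a factor $m^\eps|k|^2$; no case analysis is needed. Second, you push $m\to m(z)^-$ (needing a continuity/limit argument at the endpoint that Assumption~\ref{ass:Fz}(ii) excludes), whereas the paper introduces an intermediate $M=\sigma'm(z)$ with $\sigma\in(\sigma,\sigma')$, $\sigma'<1$, keeping everything strictly inside $[0,m(z))$. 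A small further point: your elementary bound $|e^w-\sum_{n\le 2}w^n/n!|\lesssim|w|^{2+\eps}(1+e^{\Re w})$ and hence $|R_z(k,m)|\lesssim(|k|+m)^{2+\eps}$ requires $\eps\le 1$; this is harmless (replace $\eps$ by $\eps\wedge 1$), but should be stated, just as the paper explicitly assumes $\eps\le 2$ before invoking its own elementary $\cosh$ inequalities.
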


\begin{proof}
Without loss of generality, we assume the $\eps$ in Assumption~\ref{ass:Fz}(i) satisfies $\eps \le 2$. 
We use that
\begin{equation}
\label{eq:Fm_decomp}
\abs{ \hat F_z\supm (k) }
\ge \Re \hat F_z\supm (k)
= \hat F_z\supm(0) + \Re [ \hat F_z\supm(k) - \hat F_z\supm (0) ]. 
\end{equation}
We will prove the first term to be $\gtrsim \mz^2$ and the second term to be $\gtrsim \abs k^2$ when $\delta_2$ is sufficiently small. This establishes \eqref{eq:Fm_lb}, since $a^2 + b^2 \ge \half (a + b)^2$. 

We estimate the second term of \eqref{eq:Fm_decomp} first. 
Since $F_z$ is real and symmetric, 
\begin{equation}
\Re [ \hat F_z\supm(k) - \hat F_z\supm (0) ]
=  \sum_{x\in \Z^d} ( \cos (k\cdot x) - 1) F_z(x) \cosh mx_1.
\end{equation}
Adding and subtracting 
$\hat F_z(k) - \hat F_z(0) =  \sum_{x\in \Z^d} ( \cos (k\cdot x) - 1 ) F_z(x)$
then gives
\begin{equation}
\label{eq:ReFm}
\Re [ \hat F_z\supm(k) - \hat F_z\supm (0) ]
= \hat F_z(k) - \hat F_z(0) + \sum_{x\in \Z^d} ( \cos (k\cdot x) - 1 ) F_z(x) (\cosh (mx_1) - 1). 
\end{equation}
By the elementary inequalities (here we use $\eps\le 2$)
\begin{equation}
\label{eq:cosh1}
\abs{  \cos (k\cdot x) - 1 } \lesssim \abs{ k\cdot x }^2 \le \abs k^2 \abs x^2,
\qquad
\abs{ \cosh (mx_1) - 1 } \lesssim \abs{ mx_1}^\eps \cosh mx_1, 
\end{equation}
the last term of \eqref{eq:ReFm} is bounded in absolute value by a constant multiple of
\begin{equation}
m^\eps \abs k^2 \sum_{x\in \Z^d} \abs x^2 \abs{ x_1}^\eps \abs {F_z(x) } \cosh mx_1
\le m^\eps \abs k^2 \sum_{x\in \Z^d} \abs x^{2+\eps} \abs {F_z\supm(x) }
\le m^\eps \abs k^2 K_1.
\end{equation}
Therefore, using the untilted infrared bound (Assumption~\ref{ass:Fz}(iii)) in the first term of \eqref{eq:ReFm}, we get
\begin{equation}
\Re [ \hat F_z\supm(k) - \hat F_z\supm (0) ]
\ge (K_2 - O(m^\eps) ) \abs k^2.
\end{equation}
Since $m < \mz$ and $\mz \to 0$ as $z\to z_c$, taking $\delta_2$ sufficiently small ensures $K_2 - O(m^\eps) \ge \half K_2 > 0$. 

For the first term of \eqref{eq:Fm_decomp}, take some $\sigma' \in (\sigma, 1)$ and denote $M = \sigma' m(z)$. Since $\hat F_z\supM(0) \ge 0$ by Assumption~\ref{ass:Fz}(ii), 
\begin{equation}
\label{eq:Fm0_lb}
\hat F_z\supm(0)
\ge \hat F_z\supm(0) - \hat F_z\supM(0)
= - \sum_{x\in \Z^d} F_z(x) (\cosh Mx_1 - \cosh mx_1). 
\end{equation}
Using the elementary inequality (here we use $\eps\le 2$)
\begin{equation}
\label{eq:cosh3}
\Bigabs{ \cosh t - (1 + \half t^2) } \lesssim \abs t^{2+\eps} \cosh t
\end{equation}
and $m \le \sigma \mz \le M$, we have
\begin{equation}
\cosh Mx_1 - \cosh mx_1
= \half ( M^2 - m^2 ) x_1^2 + O(M^{2+\eps}) \abs{x_1}^{2+\eps} \cosh Mx_1,
\end{equation}
so \eqref{eq:Fm0_lb} gives 
\begin{align}
\hat F_z\supm(0)
&\ge - \half (M^2 - m^2) \sum_{x\in \Z^d} x_1^2 F_z(x) 
	- O(M^{2+\eps})  \sum_{x\in \Z^d} \abs {x_1}^{2+\eps} \abs{ F_z(x) } \cosh Mx_1 \nl
&= \half (M^2 - m^2) \del_1\del_1 \hat F_z(0)
	- O(M^{2+\eps})  \sum_{x\in \Z^d} \abs {x_1}^{2+\eps} \abs{ F_z\supM(x) }. 
\end{align}
Since $M  < m(z)$, the second term is $O(M^{2+\eps})$ by hypothesis. 
Also, by Taylor's Theorem and the untilted infrared bound, $\half \del_1\del_1 \hat F_z(0) \ge K_2$, hence
\begin{equation}
\label{eq:Fm0_lb2}
\hat F_z\supm(0)
\ge K_2 (M^2 - m^2) - O(M^{2+\eps})
\ge K_2 (\sigma'^2 - \sigma^2) \mz^2 - O(\mz^{2+\eps})
\gtrsim m(z)^2
\end{equation}
for $m(z)$ sufficiently small, \ie, when $\delta_2$ is sufficiently small. 

This completes the proof of Lemma~\ref{lem:massive_infrared}.
\end{proof}

A massive infrared bound for $A_\mu\supm$ follows as a corollary of Lemma~\ref{lem:massive_infrared}. The result in fact holds with $\delta_A = (2\abs\Omega)\inv$, see \cite[Lemma~2.2]{Slad23_wsaw} which works with $\hat A_\mu\supm$ directly. 

\begin{corollary} [Massive infrared bound for $A_\mu\supm$]
\label{cor:massive_infrared}
Fix $0 < \sigma' < 1$. 
Recall $\mu_c = \abs\Omega\inv = (2d)\inv$ and the mass $m_0(\mu)$ of $C_\mu$ from \eqref{eq:m0_def}. 
There exists $\delta_A >0$ such that for all $\mu \in [ \mu_c - \delta_A, \mu_c )$ and $m \in [0,\sigma' m_0(\mu)]$, 
\begin{equation} \label{eq:Am_lb}
\abs{ \hat A_\mu \supm (k) } 
\gtrsim (\abs k + m_0(\mu) )^2
\gtrsim (\abs k  + m)^2,
\end{equation}
with the constant independent of $\mu,m$ but depends on $\sigma'$. 
\end{corollary}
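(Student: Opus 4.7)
The plan is to deduce Corollary~\ref{cor:massive_infrared} as a direct application of Lemma~\ref{lem:massive_infrared} to the family $\{A_\mu\}$ instead of $\{F_z\}$. Concretely, I will verify that $A_\mu = \delta - \mu\abs\Omega D$ satisfies Assumption~\ref{ass:Fz}(i)--(iv) with the role of the parameter $z$ played by $\mu$, the critical value $z_c$ replaced by $\mu_c = \abs\Omega\inv$, and the mass $m(z)$ replaced by $m_0(\mu)$. Once these four conditions are established, Lemma~\ref{lem:massive_infrared} (which uses only (i)--(iv)) applied with $\sigma$ replaced by $\sigma'$ yields $\abs{\hat A_\mu\supm(k)} \gtrsim (\abs k + m_0(\mu))^2$ for $\mu \in [\mu_c - \delta_A, \mu_c)$ and $m \in [0, \sigma' m_0(\mu)]$, which is the desired bound.

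First, Assumption~\ref{ass:Fz}(i) is immediate: $A_\mu$ is supported on $\{0\} \cup \{x : \abs x = 1\}$, so $\sum_x \abs x^{2+\eps} \abs{A_\mu\supm(x)} \le \sum_{\abs x = 1} \abs x^{2+\eps} \mu\abs\Omega D(x) \cosh(m) = \mu\abs\Omega \cosh m$, which is uniformly bounded for $\mu \le \mu_c$ and $m$ in a compact set. Second, for Assumption~\ref{ass:Fz}(iii), I compute $\hat A_\mu(k) - \hat A_\mu(0) = \mu\abs\Omega (1 - \hat D(k))$; since $1 - \hat D(k) \gtrsim \abs k^2$ uniformly on $\T^d$ and $\mu\abs\Omega$ stays bounded below (say $\ge 1/2$) for $\mu$ near $\mu_c$, the bound $\hat A_\mu(k) - \hat A_\mu(0) \gtrsim \abs k^2$ holds uniformly. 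Third, Assumption~\ref{ass:Fz}(iv) is exactly the known fact that $m_0(\mu) \to 0$ as $\mu \to \mu_c$, which follows from the defining relation \eqref{eq:m0_def} (or the expansion \eqref{eq:m0}).

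The only item requiring a small calculation is Assumption~\ref{ass:Fz}(ii). A direct Fourier computation yields
\begin{equation}
\hat A_\mu\supm(0) = 1 - \mu\abs\Omega \hat D\supm(0) = 1 - 2d\mu - 2\mu(\cosh m - 1),
\end{equation}
while the defining relation \eqref{eq:m0_def} of $m_0(\mu)$ can be rewritten as $2\mu(\cosh m_0(\mu) - 1) = 1 - 2d\mu$. Substituting gives
\begin{equation}
\hat A_\mu\supm(0) = 2\mu\bigl(\cosh m_0(\mu) - \cosh m\bigr),
\end{equation}
which is nonnegative for $m \in [0, m_0(\mu))$. Setting $m=0$ also shows $\hat A_\mu(0) = 1 - \mu\abs\Omega \to 0$ as $\mu \to \mu_c$, completing (ii).

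With all four items verified, Lemma~\ref{lem:massive_infrared} applied to $\{A_\mu\}$ produces some $\delta_A > 0$ such that $\abs{\hat A_\mu\supm(k)} \gtrsim (\abs k + m_0(\mu))^2 \gtrsim (\abs k + m)^2$ for $\mu \in [\mu_c - \delta_A, \mu_c)$ and $m \in [0, \sigma' m_0(\mu)]$, which is precisely \eqref{eq:Am_lb}. There is no real obstacle: the entire argument is a straightforward verification of hypotheses, with the only non-trivial input being the identity for $\hat A_\mu\supm(0)$ that comes out of the definition of $m_0(\mu)$.
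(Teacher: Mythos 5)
Your proposal is correct and follows essentially the same route as the paper: both verify Assumption~\ref{ass:Fz}(i)--(iv) for the family $A_\mu$ (with $z\mapsto\mu$, $z_c\mapsto\mu_c$, $m(z)\mapsto m_0(\mu)$) and then invoke Lemma~\ref{lem:massive_infrared}. Your identity $\hat A_\mu\supm(0)=2\mu(\cosh m_0(\mu)-\cosh m)$ is a slightly cleaner packaging of the paper's inequality chain for item (ii), but the computation is the same.
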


\begin{proof}
We use Lemma~\ref{lem:massive_infrared} with $z = \mu$, $F_z = A_\mu = \delta - \mu\abs\Omega D$, and $\mz = m_0(\mu)$. 
To verify Assumption~\ref{ass:Fz}(i)--(iv), 
(i) follows from $D(x)$ having finite support,
(iv) follows directly from the asymptotics of $m_0(\mu)$ in \eqref{eq:m0},
and (ii)--(iii) follows from direct calculations.
For (ii), since $D(x) = \frac 1 {2d}  \1 \{\abs x = 1\}$, 
\begin{align}
\hat A_\mu\supm(0) = 1 - \mu\abs\Omega \hat D\supm(0)
&= 1 - \frac {\mu\abs\Omega} d ( d - 1 + \cosh m ) 		 \nl
&\ge 1 - \frac {\mu\abs\Omega} d ( d-1 + \cosh m_0(\mu) ) = 0, 
\end{align}
and $\hat A_\mu(0) = 1 - \mu \abs\Omega \to 0$ as $\mu \to \abs\Omega\inv =  \mu_c$. 
For (iii), again using the formula of $D(x)$, 
\begin{equation}
\hat A_\mu(k) - \hat A_\mu(0)
= \mu\abs\Omega [ \hat D(0) -  \hat D(k) ]
= \frac {\mu\abs\Omega} d \sum_{j=1}^d (1 - \cos k_j) \ge \frac {4\mu} {\pi^2}\abs k^2,
\end{equation}
so we can take $K_2 = 2\mu_c/ \pi^2$ as long as $\delta_A \le \half \mu_c$. 
This completes the proof.
\end{proof}

\begin{lemma}
\label{lem:mass}
Under Assumption~\ref{ass:Fz}(i)--(iv), 
there exists $\delta_3 >0$ such that for all $z \in [ z_c - \delta_3, z_c )$,
\begin{enumerate}[label=(\roman*)]
\item
$\lambda_z$ is positive, bounded, and bounded away from 0;

\item
$\mu_z\abs\Omega \le 1$, and $\mu_z\abs\Omega \to 1$ as $z\to z_c$;

\item
$ m(z) \lesssim m_0(\mu_z) $.
\end{enumerate}
\end{lemma}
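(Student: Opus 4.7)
The plan is to establish (i)--(iii) in order, with the untilted infrared bound of Assumption~\ref{ass:Fz}(iii) as the main quantitative input, supplemented by an argument essentially identical to the one producing \eqref{eq:Fm0_lb2}.

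For (i), I would analyse the denominator $D_z := \hat F_z(0) - \sum_{x} \abs x^2 F_z(x)$ of $\lambda_z$. By Assumption~\ref{ass:Fz}(ii), $\hat F_z(0) \ge 0$, so it suffices to show $-\sum_x \abs x^2 F_z(x)$ is bounded above and below by positive constants. The $\Z^d$-symmetry of $F_z$ and Assumption~\ref{ass:Fz}(i) at $m=0$ imply $\hat F_z \in C^2$ near the origin, with $\del_j\del_j \hat F_z(0) = -\sum_x x_j^2 F_z(x)$. Evaluating the infrared bound $\hat F_z(k) - \hat F_z(0) \ge K_2 \abs k^2$ along $k = t e_1$ and letting $t\to 0$ gives $\del_1\del_1 \hat F_z(0) \ge 2K_2$, hence $-\sum_x \abs x^2 F_z(x) = d\,\del_1\del_1 \hat F_z(0) \ge 2dK_2$. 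On the other hand, $\sum_x \abs x^2 \abs{F_z(x)} \le K_1$ by Assumption~\ref{ass:Fz}(i), and $\hat F_z(0) = O(1)$ by the same. Hence $D_z \asymp 1$ uniformly in $z$, and $\lambda_z$ is positive, bounded, and bounded away from $0$.

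Part (ii) is then immediate from $\mu_z \abs\Omega = 1 - \lambda_z \hat F_z(0)$: since $\lambda_z > 0$ and $\hat F_z(0) \ge 0$ we have $\mu_z \abs\Omega \le 1$, while $\lambda_z = O(1)$ combined with $\hat F_z(0) \to 0$ from Assumption~\ref{ass:Fz}(ii) gives $\mu_z \abs\Omega \to 1$ as $z\to z_c$. In particular $\mu_z \in (0, \mu_c]$ for $z$ close to $z_c$.

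For (iii), the strategy is to show $\hat F_z(0) \gtrsim \mz^2$; combined with (i) and the asymptotics \eqref{eq:m0} this gives
\[
m_0(\mu_z)^2 = \abs\Omega(1 - \mu_z\abs\Omega) + O\bigl((1-\mu_z\abs\Omega)^2\bigr) = \abs\Omega\,\lambda_z\,\hat F_z(0) + O(\hat F_z(0)^2) \gtrsim \mz^2
\]
for $z$ close enough to $z_c$, which yields $\mz \lesssim m_0(\mu_z)$ upon taking square roots. To prove $\hat F_z(0) \gtrsim \mz^2$, I would specialise the computation leading to \eqref{eq:Fm0_lb2} to $m=0$: picking any $\sigma' \in (0,1)$ and setting $M = \sigma' \mz$, the bound $M < \mz$ together with Assumption~\ref{ass:Fz}(ii) gives $\hat F_z\supM(0) \ge 0$, so
\[
\hat F_z(0) \ge \hat F_z(0) - \hat F_z\supM(0) = -\sum_{x\in \Z^d} F_z(x)\bigl(\cosh Mx_1 - 1\bigr).
\]
Expanding $\cosh Mx_1 - 1 = \half M^2 x_1^2 + O(M^{2+\eps})\abs{x_1}^{2+\eps} \cosh Mx_1$ via \eqref{eq:cosh3}, using $-\sum_x x_1^2 F_z(x) = \del_1\del_1 \hat F_z(0) \ge 2K_2$ from part (i), and controlling the error by the moment estimate of Assumption~\ref{ass:Fz}(i) at $m=M$, one arrives at $\hat F_z(0) \ge K_2 \sigma'^2 \mz^2 - O(\mz^{2+\eps})$, which is $\gtrsim \mz^2$ once $\mz$ is sufficiently small. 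The only delicate point is this lower bound for $\hat F_z(0)$; parts (i) and (ii) reduce to Taylor expansion at $k=0$ and direct substitution, and even (iii) is essentially a special case of the calculation already performed in the proof of Lemma~\ref{lem:massive_infrared}.
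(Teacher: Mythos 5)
Your proof is correct and follows essentially the same route as the paper: part~(i) via Taylor expansion at $k=0$ and the untilted infrared bound to get $\kappa_z = \Delta\hat F_z(0)\ge 2dK_2$, part~(ii) by direct substitution, and part~(iii) by establishing $\hat F_z(0)\gtrsim m(z)^2$ through the $m=0$ specialisation of the argument giving \eqref{eq:Fm0_lb2}, then feeding this into the asymptotics \eqref{eq:m0}. The paper simply cites \eqref{eq:Fm0_lb2} at $m=0$ rather than re-deriving it, but the underlying computation is identical.
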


\begin{proof}
Recall from \eqref{eq:lambda_z} that
\begin{equation}
\lambda_z = \frac{1} { \hat F_z(0) - \sum_{x\in \Z^d} \abs x^2 F_z(x) }
	= \frac 1 { \hat F_z(0) + \kappa_z},
\qquad \mu_z\abs\Omega = 1 - \lambda_z \hat F_z(0), 
\end{equation}
where the second equality defines $\kappa_z$.

\smallskip\noindent
(i) Since $\hat F_z (0)$ is non-negative and converges to 0 as $z\to z_c$ by Assumption~\ref{ass:Fz}(ii), it suffices to show $\kappa_z = - \sum_{x\in \Z^d} \abs x^2 F_z(x)$ is positive, bounded, and bounded away from $0$. 
The boundedness follows directly from $\sum_{x\in \Z^d} \abs x^{2+\eps} \abs{ F_z(x) } \le K_1$ uniformly.
To see $\kappa_z$ is positive and bounded away from 0, we use Taylor's Theorem and the untilted infrared bound:
\begin{equation}
 \kappa_z =  - \sum_{x\in \Z^d} \abs x^2 F_z(x)
	= \Delta \hat F_z(0) \ge 2d K_2 > 0.
\end{equation}

\noindent
(ii) This follows from part (i) and Assumption~\ref{ass:Fz}(ii).

\smallskip\noindent
(iii)
By the asymptotics of $m_0(\cdot)$ in \eqref{eq:m0}, the definition of $\mu_z$, and the $m=0$ case of \eqref{eq:Fm0_lb2} (with, say, $\sigma=\half$), we have
\begin{equation}
m_0(\mu_z)^2
\ge \frac {\abs\Omega} 2 (1 - \mu_z \abs\Omega)
= \frac{ \abs\Omega } 2 \lambda_z \hat F_z(0)
\gtrsim \frac{ \abs\Omega } 2 \lambda_z m(z)^2. 
\end{equation}
The desired result then follows from $\lambda_z$ being bounded away from 0.
\end{proof}

We now prove \eqref{eq:mass_claim}.
In fact, we can take $\delta_1 = \delta_3$.
The claim $\lambda_z = O(1)$ follows from Lemma~\ref{lem:mass}(i). 
For the other claim, we use Lemma~\ref{lem:mass}(iii) and choose $\sigma_1 > 0$ sufficiently small to make
\begin{equation}
\sigma_1 m(z) \le a_1 m_0(\muz)
\qquad (z \in [z_c - \delta_3, z_c)),
\end{equation}
as desired.
Also, 
by setting $\sigma' = a_1 \in (0,1)$ in Corollary~\ref{cor:massive_infrared},
and by using $\mu_z \to \abs\Omega\inv = \mu_c$ from Lemma~\ref{lem:mass}(ii), we obtain
\begin{equation} \label{eq:A_massive}
\abs{ \hat A_\muz\supm (k) } \gtrsim (\abs k + m)^2
	\qquad ( z \in [z_c - \delta_4, z_c),\  m \in [0, \sigma_1 m(z)] )
\end{equation}
for some $\delta_4 > 0$,
with the constant independent of $z,m$. 
By setting $\sigma = \sigma_1$ in Lemma~\ref{lem:massive_infrared}, and by redefining $\delta_4$ to be smaller if necessary, we have
\begin{equation} \label{eq:F_massive}
\abs{ \hat F_z \supm (k) } 
\gtrsim (\abs k  + m)^2
	\qquad ( z \in [z_c - \delta_4, z_c),\  m \in [0, \sigma_1 m(z)] )
\end{equation}
too.

\subsubsection*{Proof of Lemma~\ref{lem:AFEbds}}
We use the massive infrared bounds in conjugation with the following lemma, 
which translates the good moment behaviour of $E_z$ in \eqref{eq:E_condition}
into good bounds on $\hat E_z\supm$ and its derivatives. 
Cancellation of powers of $(\abs k + m)$ ultimately allows $d-2$ derivatives of $\hat f_z\supm$ to be taken. 

\begin{lemma}
\label{lem:E}
Under Assumption~\ref{ass:Fz}(i)--(iv),
there exists $\delta_5 >0$ such that for all $z \in [ z_c - \delta_5, z_c )$ and all $m \in [0, \mz)$, 
\begin{equation} \label{eq:E_gamma_ub}
\abs{ \grad^\gamma \hat E_z \supm (k) } 
\lesssim  (\abs k  + m)^{2 + (\eps \wedge 1) - \abs \gamma }
\qquad ( \abs \gamma \le 2 ),
\end{equation}
with the constant independent of $z,m$. 
\end{lemma}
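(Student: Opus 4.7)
The plan is to expand the complex exponential $e^{mx_1 + ik\cdot x}$ around the origin and to exploit the moment conditions \eqref{eq:E_condition} together with the $\Z^d$-symmetry of $E_z = A_\muz - \lambda_z F_z$. First I would differentiate under the sum to write
\begin{equation*}
\grad^\gamma \hat E_z\supm(k) = \sum_{x\in \Z^d} (ix)^\gamma E_z(x)\, e^{mx_1 + ik\cdot x}.
\end{equation*}
The reflection and $\pi/2$-rotation symmetries of $E_z$ (inherited from $A_\muz$ and Assumption~\ref{ass:Fz}), combined with \eqref{eq:E_condition}, imply $\sum_x x^\alpha E_z(x) = 0$ for every multi-index $\alpha$ with $\abs\alpha \le 2$. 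In particular, for $\abs\gamma \le 2$ the function $x \mapsto x^\gamma E_z(x)$ has vanishing moments of order $\le 2 - \abs\gamma$, so subtracting from $e^{mx_1 + ik\cdot x}$ its Taylor polynomial in $(m,k)$ of degree $n = 2 - \abs\gamma$ does not change the value of the sum.

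Next I would estimate the Taylor remainder via the elementary complex-variable inequality
\begin{equation*}
\Bigabs{ e^w - \sum_{j=0}^{n} \tfrac{w^j}{j!} } \lesssim \abs{w}^{n+\eps'}\, e^{\max(0,\Re w)},
\qquad \eps' \in [0,1],\ w \in \C,
\end{equation*}
applied with $w = mx_1 + ik\cdot x$ and $\eps' = \eps \wedge 1$. Since $\abs w \le (m + \abs k)\abs x$ and $e^{\max(0,\Re w)} \le 2\cosh(mx_1)$, this yields
\begin{equation*}
\bigabs{ \grad^\gamma \hat E_z\supm(k) } \lesssim (m + \abs k)^{2 - \abs\gamma + \eps'} \sum_{x\in \Z^d} \abs x^{2 + \eps'} \abs{E_z(x)} \cosh(mx_1).
\end{equation*}
It then remains to show the last sum is bounded uniformly in $z$ and $m$. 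Using $\cosh(mx_1) = \tfrac12 (e^{mx_1} + e^{-mx_1})$ and the reflection symmetry $x_1 \mapsto -x_1$ of $F_z$ (which turns an $F_z^{(-m)}$-weighted sum into an $F_z\supm$-weighted one), the $F_z$ contribution is controlled by Assumption~\ref{ass:Fz}(i), while the $A_\muz$ contribution is trivial because $A_\muz$ has finite support. Combined with the boundedness $\lambda_z = O(1)$ from Lemma~\ref{lem:mass}, this gives \eqref{eq:E_gamma_ub} with $\delta_5 = \delta_3$.

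The step I expect to require the most care is the complex Taylor remainder: the correct exponential factor there is $e^{\max(0,\Re w)}$ (rather than $e^{\Re w}$ or $e^{\abs w}$), and this is precisely what allows the bound to be symmetric under $m \mapsto -m$ and hence absorbed by the two-sided tilted moment bounds on $F_z$. The interpolation that reduces the standard third-order remainder $\abs w^{n+1}$ down to $\abs w^{n+\eps'}$ with $\eps' \le 1$ is what produces the stated exponent $2 + (\eps \wedge 1) - \abs\gamma$, matching exactly the moment hypothesis in Assumption~\ref{ass:Fz}(i).
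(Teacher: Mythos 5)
Your proof is correct, and it takes a noticeably more compact route than the paper's. The paper splits the tilted Fourier transform via $\hat E\supm(k) = \hat E\supm(0) + \hat E(k) + \varphi_m(k)$, then treats the real and imaginary parts separately with four individual elementary inequalities for $\cos$, $\sin$, $\cosh$, $\sinh$. You instead differentiate under the sum to get $\grad^\gamma \hat E_z\supm(k) = \sum_x (ix)^\gamma E_z(x) e^{w}$ with $w = mx_1 + ik\cdot x$, use the same vanishing-moment facts (order $\le 2$ moments of $E_z$ vanish by \eqref{eq:E_condition} together with reflection and $\pi/2$-rotation symmetry) to subtract the degree-$(2-\abs\gamma)$ Taylor polynomial of $e^w$ for free, and then invoke a single complex-variable remainder bound $\abs{e^w - \sum_{j\le n} w^j/j!} \lesssim \abs w^{n+\eps'} e^{\max(0,\Re w)}$. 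That remainder bound, which follows by interpolating the standard $\abs w^{n+1} e^{\max(0,\Re w)}$ bound (small $\abs w$) against the trivial $\abs w^n e^{\max(0,\Re w)}$ bound (large $\abs w$), simultaneously handles all the piecewise trigonometric/hyperbolic estimates of the paper. The reduction to the two-sided moment $\sum_x \abs x^{2+\eps'} \abs{E_z(x)} \cosh(mx_1)$, absorbed via reflection symmetry and Assumption~\ref{ass:Fz}(i) (plus $\lambda_z = O(1)$ and finite support of $A_\muz$), is exactly the same uniform bound the paper uses. So the ingredients coincide; what your approach buys is a unified one-shot estimate in place of the paper's case analysis, at the cost of needing the slightly less familiar complex Taylor remainder lemma.
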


\begin{proof}
Since $E_z = A_\muz - \lambda_z F_z$
and $A_\muz = \delta - \muz\abs\Omega D$ has finite support, 
it follows from Lemma~\ref{lem:mass} and the assumptions that $\sum_{x\in \Z^d} \abs x^{2+\eps} \abs{ E_z\supm(x)}$ is bounded uniformly in $z \in [ z_c - \delta_5, z_c )$ and in $m \in [0, \mz)$, for some $\delta_5>0$. 
For simplicity, we write $E = E_z$ and $\eps_1 = \eps \wedge 1$ next. 
Note that all second moments of $E(x)$ vanish: 
$\sum_{x\in \Z^d} x_i x_j E(x) = 0$ for $i\ne j$ because $E$ is even, and $\sum_{x\in \Z^d} x_i^2 E(x) = 0$ for all $i$ by \eqref{eq:E_condition} and symmetry.
The claim \eqref{eq:E_gamma_ub} essentially follows from expanding the function $(k,m)\mapsto \hat E\supm(k)$ around $(0,0)$ up to $2+\eps_1$ orders, since derivatives of $\hat E\supm (k)$ at $(0,0)$ are exactly moments of $E(x)$.

In detail, using symmetry we write
\begin{equation}
\hat E\supm(k) = 
	\sum_{x\in \Z^d} \cos (k\cdot x) \cosh (mx_1) E(x) 
	+ i \sum_{x\in \Z^d} \sin (k\cdot x) \sinh (mx_1) E(x). 
\end{equation}
Since $\hat E(0) = \sum_{x\in \Z^d} E(x) = 0$, we can decompose $\hat E\supm(k)$ as 
\begin{equation}
\label{eq:Em_decomp}
\hat E\supm(k)
	= \hat E\supm(0) + \hat E(k) + \varphi_m(k)
	\quad \text{with}\quad
	\varphi_m(k) = \hat E\supm(k) - \hat E\supm(0) - \hat E(k) + \hat E(0) .
\end{equation}
We will show all three terms of \eqref{eq:Em_decomp} satisfy the desired bound \eqref{eq:E_gamma_ub}. 

For $\hat E\supm(0)$, since $\sum_{x\in \Z^d} E(x) = \sum_{x\in \Z^d} x_1^2 E(x) = 0$, we have
\begin{equation}
\hat E\supm(0) = \sum_{x\in \Z^d} \cosh(mx_1) E(x)
	= \sum_{x\in \Z^d} \Big( \cosh(mx_1) - 1 - \frac{ m^2 x_1^2 } 2 \Big) E(x),
\end{equation}
and the desired bound follows from \eqref{eq:cosh3} with $\eps_1$:
\begin{equation}
\abs{ \hat E\supm(0) } \lesssim m^{2+\eps_1} \sum_{x\in \Z^d}  \abs {x_1}^{2+\eps_1} \abs{ E\supm(x) }
	\lesssim (\abs k + m)^{2+\eps_1}. 
\end{equation}
For $\hat E(k)$, we define $g_x(k) = \cos(k\cdot x) - 1 + \frac{ (k\cdot x)^2 }{2!}$.
Using \eqref{eq:E_condition} and symmetry, 
\begin{equation}
\hat E(k) = \sum_{x\in \Z^d} E(x) \cos(k\cdot x) = \sum_{x\in \Z^d} E(x) g_x(k).
\end{equation}
Since $\eps_1 \le 2$, by elementary properties of sine and cosine, we have
$\abs{ \grad^\gamma g_x(k) } 
\lesssim \abs k^{2+\eps_1 - \abs \gamma} \abs x^{2+\eps_1}$, so
\begin{equation}
\abs{ \grad^\gamma \hat E (k) }
\le \sum_{x\in \Z^d} \abs {E(x)} \abs{ \grad^\gamma g_x(k) }
\lesssim \abs k^{2+\eps_1 - \abs \gamma} \sum_{x\in \Z^d} \abs x^{2+\eps_1} \abs{ E(x) }
\lesssim ( \abs k + m )^{2 + \eps_1 - \abs \gamma}. 
\end{equation}

For $\varphi_m(k)$, as in \eqref{eq:ReFm}, we write 
\begin{equation}
\varphi_m(k) = \sum_{x\in \Z^d} (\cos(k\cdot x) - 1 ) (\cosh(mx_1) - 1 ) E(x) 
	+ i \sum_{x\in \Z^d} \sin(k\cdot x) \sinh(mx_1) E(x). 
\end{equation}
To bound the real part and its derivatives, we use
$\abs{ \grad^\gamma ( \cos (k\cdot x) - 1  )} \lesssim \abs k^{2-\abs \gamma} \abs x^2$ 
and the second inequality of \eqref{eq:cosh1}, which give
\begin{equation}
\bigabs{  \Re \grad^\gamma \varphi_m(k) }
\lesssim \abs k^{2- \abs \gamma} m^{\eps_1} \sum_{x\in \Z^d} \abs x^2 \abs{x_1}^{\eps_1} \abs{ E\supm(x) }
\lesssim (\abs k + m)^{2 + \eps_1 - \abs \gamma}. 
\end{equation}
To bound the imaginary part and its derivatives, we write
\begin{align}
\Im \varphi_m(k)
&= \sum_{x\in \Z^d} \( \sin(k\cdot x) - k\cdot x \) \sinh(mx_1) E(x) 
	+ \sum_{x\in \Z^d}  (k\cdot x) \sinh(mx_1) E(x) \nl
&= \sum_{x\in \Z^d} \( \sin(k\cdot x) - k\cdot x \) \sinh(mx_1) E(x) 
	+ \sum_{x\in \Z^d}  (k\cdot x) \( \sinh(mx_1) - mx_1 \) E(x) ,
\end{align}
where the second equality follows from $\sum_{x\in \Z^d} x_i x_j E(x) = 0$ for all $i,j$. 
Using the elementary inequalities (here we use $\eps_1 \le 1$)
\begin{align}
\abs{ \grad^\gamma (  \sin(k\cdot x) - k\cdot x  ) } &\lesssim \abs k^{2+\eps_1 - \abs \gamma} \abs x^{2 + \eps_1},  	\nl
\abs{ \sinh mx_1 } &\le \cosh mx_1, 	\\ \nonumber
\abs{ \sinh(mx_1) - mx_1 } &\lesssim \abs{ mx_1 }^{1+\eps_1} \cosh mx_1,
\end{align}
we get 
\begin{equation}
\begin{aligned}
\bigabs{  \Im \grad^\gamma \varphi_m(k) }
&\lesssim \( \abs k^{2 + \eps_1- \abs \gamma}     
	+ \abs k^{1-\abs \gamma} m^{1+\eps_1} \1_{\abs \gamma \le 1} \) 
	\sum_{x\in \Z^d} \abs x^{2+\eps_1} \abs{ E\supm(x) }	 \\
&\lesssim (\abs k + m)^{2 + \eps_1 - \abs \gamma}. 
\end{aligned}
\end{equation}
This completes the proof of Lemma~\ref{lem:E}.
\end{proof}

\begin{proof}[Proof of Lemma~\ref{lem:AFEbds}]
We take $\delta_0 = \delta_3 \wedge \delta_4 \wedge \delta_5$ and $\sigma_0 = \sigma_1$,
so that all estimates above apply. 
The differentiability of $\hat F_z\supm$ has been established in Lemma~\ref{lem:F}, and $\hat A\supm_\muz = 1 - \muz \hat D \supm$ is classically differentiable to all orders. 
The differentiability of $\hat E_z \supm = \hat A_\muz \supm - \lambda_z \hat F_z \supm$ follows from linearity.

\medskip\noindent
\emph{Bound on $\grad^\gamma \hat A_\muz\supm / \hat A_\muz\supm$.}
For simplicity, we omit the subscript $\muz$ and write $A\supm = A_\muz\supm$ next. Let $\gamma$ be any multi-index.
There is nothing to prove for $\abs \gamma=0$ since the ratio is then $1$.
For $\abs \gamma = 1$, 
we first prove $\abs{ \grad^\gamma \hat A\supm(k) } \lesssim \abs k + m$. 
Using symmetry, 
\begin{equation}
\hat A\supm(k) = 
	\sum_{x\in \Z^d} \cos (k\cdot x) \cosh (mx_1) A(x) 
	+ i \sum_{x\in \Z^d} \sin (k\cdot x) \sinh (mx_1) A(x). 
\end{equation}
To estimate its derivative, we use the inequalities
\begin{equation}
\abs{ \grad_j \cos (k\cdot x) } \le \abs k \abs x^2,  \quad
\abs{ \grad_j \sin (k\cdot x) }  \le \abs x, \quad
\abs{ \sinh (mx_1) } \le m \abs{ x_1} \cosh (mx_1), 
\end{equation}
which give
\begin{equation} \label{eq:DAm}
\abs{ \grad_j \hat A\supm(k) } 
\le  (\abs k + m) \sum_{x\in \Z^d} \abs x^2 \abs{ A\supm(x) } 
\lesssim \abs k + m, 
\end{equation}
because $A = \delta - \muz D$ has finite support and $m \le \sigma_0 \mz \lesssim 1$. 
For $\abs \gamma \ge 2$, 
we use the uniform bound $\abs{ \grad^\gamma \hat A\supm (k) } \lesssim 1$, which is a consequence of $A(x)$ having (uniform) moments of any order.
By the massive infrared bound \eqref{eq:A_massive}, we then have
\begin{equation}
\biggnorm{ \frac{ \grad^\gamma \hat A \supm }{ \hat A\supm } (k) }_q
\lesssim \bignorm{ (\abs k + m)^{- (\abs \gamma \wedge 2 ) } }_q
\le  \bignorm{ \abs k^{-(\abs \gamma \wedge 2)} }_q
\lesssim 1
\end{equation}
uniformly in $z,m$, 
whenever $q\inv > (\abs \gamma \wedge 2) / d$.
This is stronger than the desired \eqref{eq:AFE}.

\medskip\noindent
\emph{Bound on $\grad^\gamma \hat F_z\supm / \hat F_z\supm$.}
Let $ \abs \gamma \le d-2$.
There is again nothing to prove for $\abs \gamma =0$.
The $\abs \gamma = 1$ case is exactly the same as above, using finiteness of $\sum_{x\in \Z^d} \abs x^2 \abs{ F_z \supm(x) }$ at \eqref{eq:DAm} and the massive infrared bound \eqref{eq:F_massive} instead of \eqref{eq:A_massive}.
For $2 \le \abs \gamma \le d-2$, 
it follows from Lemma~\ref{lem:F} and the monotonicity of $L^q(\T^d)$ norms in $q$ that
\begin{equation} \label{eq:F_gamma}
\bignorm{ \grad^\gamma \hat F_z\supm }_{\frac d {\abs \gamma -2}}\lesssim 1 
	\qquad (2 \le \abs \gamma \le d-2 ),
\end{equation}
with the constant independent of $z,m$. 
Since $ \abs{ 1/ \hat F_z\supm (k) } \lesssim (\abs k+m)^{-2} \le \abs k^{-2} \in L^p$ for all $p\inv > 2/d$ by the massive infrared bound, H\"older's inequality immediately gives $ \grad^\gamma \hat F_z\supm / \hat F_z\supm \in L^q$ for all $q\inv > (\abs \gamma - 2 + 2)/d$, with the $L^q$ norm bounded uniform in $z,m$, as desired.

\medskip\noindent
\emph{Bound on $\grad^\gamma \hat E_z\supm / \hat A_\muz\supm \hat F_z\supm$.}
Let $\abs \gamma \le d-2$. 
For $\abs \gamma \le 2$, writing $\eps_1 = \eps \wedge 1$, it follows from Lemma~\ref{lem:E}
and the massive infrared bounds \eqref{eq:A_massive}--\eqref{eq:F_massive} that 
\begin{equation}
\biggnorm{ \frac{ \grad^\gamma \hat E_z \supm }{ \hat A_\muz\supm \hat F_z\supm} (k) }_q
\lesssim \bignorm{ (\abs k + m)^{ 2 + \eps_1 - \abs \gamma - 4} }_q
\le  \bignorm{ \abs k^{-(2 - \eps_1 + \abs \gamma )} }_q
\lesssim 1
\end{equation}
uniformly in $z,m$, whenever $q\inv > (2 - \eps_1 + \abs \gamma )/ d$.
This is stronger than the desired \eqref{eq:AFE} because $\eps' \le \eps_1 = \eps \wedge 1$. 
For $3 \le \abs \gamma \le d-2$, it follows from $\hat E_z \supm = \hat A_\muz \supm - \lambda_z \hat F_z \supm$ and Lemma~\ref{lem:F} that
\begin{equation} 
\bignorm{ \grad^\gamma \hat E_z\supm }_{ \frac d {\abs \gamma - 2 - \eps''} } \lesssim 1 
	\qquad (3 \le \abs \gamma \le d-2 ),
\end{equation}
with the constant independent of $z,m$. 
The desired result then follows by picking $\eps' \le \eps''$ and using H\"older's inequality and the massive infrared bounds: $ \abs{ 1/ ( \hat A_\muz\supm (k) \hat F_z\supm (k) )} \lesssim (\abs k+m)^{-4} \le \abs k^{-4} \in L^p$ for all $p\inv > 4/d$, with the $L^p$ norm bounded uniform in $z,m$. 

This completes the proof of Lemma~\ref{lem:AFEbds} and concludes the proof of Theorem~\ref{thm:near_critical}.
\end{proof}

\section{Self-avoiding walk on $\Z^d$: proof of Theorem~\ref{thm:saw}}
\label{sec:saw}

The proof of Theorem~\ref{thm:saw} uses Theorem~\ref{thm:near_critical}, with the convolution equation \eqref{eq:FGz} provided by the lace expansion. 
In dimensions $d>4$, the lace expansion \cite{BS85, Slad06} produces the identity
\begin{equation}
\label{eq:conv_saw}
G_z = \delta + zD * G_z + \Pi_z * G_z
\qquad (0 \le z \le z_c),
\end{equation}
with $\Pi_z : \Z^d \to \R$ an explicit, $\Z^d$-symmetric function. This can be rearranged as 
\begin{equation} \label{eq:def_Fz}
F_z * G_z = \delta
\quad\text{with}\quad
F_z = \delta -zD - \Pi_z,
\end{equation}
which is of the form \eqref{eq:FGz}. 
For good reviews on the lace expansion for self-avoiding walk, see \cite{MS93, Slad06}. 
We recall here only the results we need.
Note that no additional numerical input, beyond those already obtained in \cite{HS92b}, is used in our analysis.
\begin{proposition} \label{prop:lace}
Let $d >4$.
\begin{enumerate}[label=(\roman*)]
\item
For all $z \in (0, z_c]$, $G_z$ is given by the Fourier integral representation \eqref{eq:Gint}. 

\item 
There is a constant $K_2 > 0$ such that for all $z \in (0, z_c]$, 
\begin{equation}
\hat F_z(k) - \hat F_z(0) \ge K_2 \abs k^2
\qquad (k\in \T^d).
\end{equation}

\item
There are constants $\kappa, \delta >0$ 
satisfying $\kappa(1+\kappa) <1$
such that for all $z \in [z_c - \delta, z_c )$ and $m\in [0, \mz)$, 
\begin{equation} \label{eq:bubble^m}
B\supm(z) = \sum_{x\ne 0} G_z\supm(x)^2 \le \kappa. 
\end{equation}

\end{enumerate}
\end{proposition}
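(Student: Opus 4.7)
My plan is to derive all three parts of Proposition~\ref{prop:lace} from the Hara--Slade lace-expansion analysis of \cite{HS92b}, supplemented by standard facts about subcritical self-avoiding walks. Parts (i) and (ii) are essentially repackagings of existing results, while the substantive work lies in the tilted version of the bubble bound (iii).

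For part (i), the standard subcritical exponential decay \eqref{eq:G_exp_bound} places $G_z\in\ell^1(\Z^d)$ for $z<z_c$, and the lace-expansion diagrammatic bounds of \cite{HS92b} yield the same for $\Pi_z$ with the same exponential rate, so $F_z=\delta-zD-\Pi_z\in\ell^1$. Fourier-transforming $F_z*G_z=\delta$ gives $\hat F_z\hat G_z=1$, and part (ii) combined with $\hat F_z(0)\ge 0$ makes $1/\hat F_z$ integrable on $\T^d$ for $d>4$; inverse Fourier transformation then gives \eqref{eq:Gint}, with the critical case obtained by continuity. For part (ii), I decompose $\hat F_z(k)-\hat F_z(0)=z(1-\hat D(k))-(\hat\Pi_z(k)-\hat\Pi_z(0))$: the first summand satisfies $z(1-\hat D(k))\ge cz|k|^2$ with explicit $c>0$, and Taylor expansion plus the $\Z^d$-symmetry of $\Pi_z$ give $|\hat\Pi_z(k)-\hat\Pi_z(0)|\le\tfrac12|k|^2\sum_x|x|^2|\Pi_z(x)|$; the Hara--Slade bounds make this diagrammatic sum small relative to $cz$ in $d\ge 5$, yielding $\hat F_z(k)-\hat F_z(0)\ge K_2|k|^2$ uniformly in $z$.

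Part (iii) is the crux. The untilted case $m=0$ reduces to the small critical bubble $B(z_c)\le\kappa_0$ with $\kappa_0(1+\kappa_0)<1$ established in $d\ge 5$ by \cite{HS92b}, combined with $B(z)\uparrow B(z_c)$ as $z\uparrow z_c$ by monotone convergence. For the tilted case, my plan exploits two ingredients. First, the $\Z^d$-symmetry of $G_z$ allows rewriting $B^{(m)}(z)=\sum_{x\ne 0}G_z(x)^2\cosh(2mx_1)$, which is monotone increasing in $m\ge 0$; hence $B^{(m)}(z)\le B^{(m(z))}(z)$ for all $m\in[0,m(z))$, provided the right-hand side is finite. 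Second, the Ornstein--Zernike asymptotics for subcritical $G_z$ (see \cite[Theorem~4.4.7]{MS93}) combined with the critical asymptotic $G_{z_c}(x)\asymp\nnnorm x^{-(d-2)}$ from \cite{Hara08} permit decomposing $B^{(m(z))}(z)$ into a short-range contribution close to $B(z_c)$ (where $\cosh(2m(z)x_1)\to 1$ pointwise and dominated convergence applies) and a long-range contribution that vanishes as $z\to z_c$ for $d>4$: the scaling $y=m(z)x$ reveals the latter to be of order $m(z)^{d-4}$ times a finite integral over $\R^d$. Combining, $B^{(m(z))}(z)\le B(z_c)+o(1)$, so $B^{(m)}(z)\le\kappa_0+o(1)\le\kappa$ for $z$ sufficiently close to $z_c$.

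The main technical obstacle in carrying out this plan is making the OZ asymptotics quantitative and direction-uniform in the near-critical regime. An alternative route is a Parseval-based Fourier identity $B^{(m)}(z)+1=\int_{\T^d}|\hat F_z^{(m)}(k)|^{-2}\,dk/(2\pi)^d$ combined with a massive infrared bound on $|\hat F_z^{(m)}|$; this requires extending the Hara--Slade diagrammatic bounds to the tilted $\Pi_z^{(m)}$ via pointwise domination $G_z^{(m)}(x)\le\|G_z\|_{\ell^2(\Z^d)}\,e^{-(m(z)-m)\|x\|_\infty}$ from \eqref{eq:G_exp_bound}, which would verify Assumption~\ref{ass:Fz}(i) for $F_z$ and make Lemma~\ref{lem:massive_infrared} applicable without circularity.
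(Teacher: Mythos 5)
The paper's proof of Proposition~\ref{prop:lace} is almost entirely citation: parts (i) and (ii) are attributed to \cite[Proposition~1.3]{Hara08} (whose proof traces back to \cite{HS92a,HS92b}), and part (iii) is obtained by combining the small critical bubble $B^{(0)}(z_c)\le C_2$ with $C_2(1+C_2)<1$ from \cite[Theorem~2.5]{HS92a} together with \cite[Lemma~3.10]{HS92a} for the tilted, near-critical extension. You instead attempt to rederive the results from more primitive facts, and for (i) and (ii) your arguments are reasonable sketches of what \cite{HS92a,HS92b,Hara08} actually do (though the critical case $z=z_c$ of (i) is not ``continuity'' in any naive sense, since $G_{z_c}\notin\ell^1$; the correct statement requires the infrared bound to hold uniformly up to $z_c$ and a dominated-convergence passage in Fourier space, which is precisely the content of the Hara--Slade bootstrap).

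The real concern is part (iii). Your primary route replaces the supremum over $m\in[0,m(z))$ by the endpoint value $B^{(m(z))}(z)$ using the monotonicity of $\cosh(2m x_1)$ in $m$, and then tries to show $B^{(m(z))}(z)\le B(z_c)+o(1)$ using Ornstein--Zernike asymptotics. This has two genuine gaps that you flag but do not close. First, $B^{(m(z))}(z)$ is exactly at the boundary of summability: it is a monotone limit of the finite $B^{(m)}(z)$ and could a priori be infinite, and its finiteness for fixed subcritical $z$ is not the same as uniform control as $z\uparrow z_c$. Second, the OZ asymptotics \eqref{eq:OZ} are one-dimensional (along a coordinate axis) and involve a $z$-dependent norm $|\cdot|_z$; upgrading them to a quantitative, direction-uniform, $z$-uniform upper bound on $G_z(x)$ with the required $e^{-m(z)|x|_z}$ decay and polynomial prefactor is itself a substantial result, comparable in difficulty to Theorem~\ref{thm:saw}, and cannot be taken as an input to a preliminary lemma used in proving Theorem~\ref{thm:saw}. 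Your alternative Parseval route $B^{(m)}(z)+1=\int_{\T^d}|\hat F_z^{(m)}(k)|^{-2}\,dk/(2\pi)^d$ is algebraically correct, but you correctly note it leans on a massive infrared bound for $\hat F_z^{(m)}$, which in this paper's architecture rests on Assumption~\ref{ass:Fz}(i) for $F_z^{(m)}$, which rests on Proposition~\ref{prop:diagram}, which rests on the bubble bound (iii) itself. This circularity is not resolved in your proposal; it is resolved in \cite{HS92a} by a bootstrap/forbidden-region argument starting from small $z$, and the tilted statement you want is exactly \cite[Lemma~3.10]{HS92a}, which is what the paper cites. So the correct fix is not to reprove (iii) but to invoke that lemma; as written, your proof has a gap at the crux.
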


\begin{proof}
(i) and (ii) The results are stated in \cite[Proposition~1.3]{Hara08}. The proof goes back to \cite{HS92a,HS92b}. 

\smallskip\noindent
(iii) By \cite[Theorem~2.5]{HS92a}, there is a positive constant $C_2$ satisfying $C_2(1+C_2) < 1$ such that $B^{(0)}(z_c) \le C_2$.
Taking $\gamma >0$ sufficiently small ensures that $\kappa = C_2 + \gamma$ satisfies $\kappa(1+\kappa) < 1$ still.
The desired bound on $B\supm(z)$ then follows from \cite[Lemma~3.10]{HS92a}. (The constant $\delta$ depends on $\gamma$.)
\end{proof}

The next proposition is a diagrammatic estimate on $\Pi_z$ that improves previous results. 
The improvement uses the asymptotics of $G\crit(x)$ in \eqref{eq:G_crit_asymp}. 

\begin{proposition}
\label{prop:diagram}
Let $d>4$, and let $\delta > 0$ be given by Proposition~\ref{prop:lace}(iii).
For any real number $a \in (0,d-2]$,
there is a constant $K_a$ such that
for all $z \in [z_c - \delta, z_c)$
and all $m \in [0, \mz)$, 
\begin{equation} \label{eq:Pi_moments}
\sum_{x\in \Z^d} \abs x^a \abs{ \Pi_z \supm(x) } \le K_a .
\end{equation}
\end{proposition}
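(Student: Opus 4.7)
The plan is to reduce to the standard diagrammatic bound on each lace-expansion coefficient $\Pi_z^{(N)}$, transfer the bound to the tilted version $\Pi_z^{(N,m)}$, distribute the weight $|x|^a$ across one line of the diagram, and then sum over $N$ using the bubble bound from Proposition~\ref{prop:lace}(iii). First, I would recall from \cite{HS92a, MS93, Slad06} the decomposition $\Pi_z = \sum_{N\ge 1}(-1)^N \Pi_z^{(N)}$ with $\Pi_z^{(N)} \ge 0$, together with the Hara--Slade pointwise bound that dominates $\Pi_z^{(N)}(x)$ by a sum of products of $G_z$-factors arranged into $N$ ``bubble'' subdiagrams, with at least three vertex-disjoint $G_z$-lines linking $0$ to $x$. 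Since $(f \ast g)^{(m)} = f^{(m)} \ast g^{(m)}$, the same bound holds for $\Pi_z^{(N,m)}(x) = \Pi_z^{(N)}(x) e^{mx_1}$ upon replacing every $G_z$ by $G_z^{(m)}$.

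Next, to estimate $\sum_{x} |x|^a \Pi_z^{(N,m)}(x)$, I would distribute the weight using $|x|^a \lesssim \sum_i |y_i - y_{i-1}|^a$ along the edge displacements of one of the distinguished paths from $0$ to $x$, placing the weight $|x|^a$ on a single edge. After summing over internal vertices and pairing lines into squares via Cauchy--Schwarz, the estimate factors into a product of tilted bubbles $B^{(m)}(z) \le \kappa < 1$ (supplied by Proposition~\ref{prop:lace}(iii)), together with a single unsquared ``weighted line'' factor carrying $|x|^a$, up to combinatorial constants in $N$. Summing the geometric series in $N$ then reduces the proposition to bounding this weighted line uniformly in $z \in [z_c - \delta, z_c)$ and $m \in [0, m(z))$.

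The critical analytic input, and the source of the improvement from earlier $a < d-4$ type bounds to the full range $a \le d-2$, is the uniform pointwise bound $G_z(x) \le G_{z_c}(x) \lesssim \nnnorm{x}^{-(d-2)}$, which follows from \eqref{eq:G_crit_asymp} combined with monotonicity of $G_z$ in $z$. Three vertex-disjoint $G_z^{(m)}$-lines produce pointwise decay $\nnnorm{x}^{-3(d-2)}$ once two of them are collected into a tilted squared pair absorbed by the bubble bound, leaving only the untilted pointwise bound on the remaining line; the weighted sum $\sum_x |x|^a \nnnorm{x}^{-3(d-2)}$ converges for $a < 2d - 6$, which accommodates every $a \le d-2$ since $d > 4$. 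The main obstacle is to arrange the Cauchy--Schwarz pairings so that the tilt factor $e^{mx_1}$ is always absorbed into squared pairs controlled by Proposition~\ref{prop:lace}(iii), rather than being left on an unsquared line where only a pointwise bound on $G_z^{(m)}$ --- which would be Theorem~\ref{thm:near_critical} itself --- could handle it.
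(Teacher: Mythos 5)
Your proposal follows the same route as the paper: decompose $\Pi_z=\sum_{N\ge1}(-1)^N\Pi_z^{(N)}$, apply the standard diagrammatic bound for $\Pi_z^{(N)}$, put $|x|^a$ on a single line and bound that line in $\ell^\infty$ via the new input $\sup_x|x|^{d-2}G_{z_c}(x)<\infty$ from \eqref{eq:G_crit_asymp}, bound the remaining lines in $\ell^2$ via the tilted bubble estimate \eqref{eq:bubble^m}, and sum the resulting geometric series in $N$. You also correctly identify the one delicate point, namely keeping the tilt $e^{mx_1}$ off the unsquared, $|x|^a$-weighted line, but you leave it as an unresolved ``main obstacle,'' and that is the one genuine gap.

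The paper resolves this cleanly by exploiting the ladder structure of the $\Pi^{(N)}$ diagrams, which contain \emph{two} disjoint paths of lines from $0$ to $x$ (the top and the bottom of the ladder, as in Figure~\ref{figure:Pi4}). The tilt is distributed telescopically along one side, using $e^{mx_1}=e^{mu_1}e^{m(x_1-u_1)}$ and so on along the top path, while the weight $|x|^a$ is split along the \emph{other} side using $|x|^a\le 2^a(|v|^a+|x-v|^a)$ on the bottom path. Because the tilt and the weight live on different sides of the diagram, they never land on the same line: the $\ell^\infty$ norm is always taken on an \emph{untilted} factor $\sup_x|x|^a G_z(x)$ or $\sup_x|x|^a H_z(x)$ (controlled by \eqref{eq:G_crit_asymp}), while the tilted factors $G_z^{(m)}$, $H_z^{(m)}$ appear only in $\ell^2$ norms that are handled by \eqref{eq:ell2_norms}. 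Your heuristic ``$\sum_x|x|^a\nnnorm{x}^{-3(d-2)}$'' is not quite how the estimate closes --- there is no summable triple-power there; the actual mechanism is the $\ell^\infty$/$\ell^2$ split of Slade's Lemma~4.6 --- and the ``three vertex-disjoint lines'' picture is not the relevant structure, it is the two-sided ladder that matters. With that one structural observation supplied, the rest of your argument goes through as in the paper.
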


\begin{proof}[Proof of Theorem~\ref{thm:saw} assuming Proposition~\ref{prop:diagram}]
We prove the desired bound for $z$ close to $z_c$ using Theorem~\ref{thm:near_critical}, 
and prove for the remaining $z < z_c - \delta$ using the exponential decay \eqref{eq:G_exp_bound}. 

To verify the hypotheses of Theorem~\ref{thm:near_critical},
we use Propositions~\ref{prop:lace} and \ref{prop:diagram}.
We know that $G_z$ is given by the Fourier integral \eqref{eq:Gint} by Proposition~\ref{prop:lace}(i). 
For Assumption~\ref{ass:Fz}, (iii) is exactly Proposition~\ref{prop:lace}(ii), and (iv) is given by the paragraph containing \eqref{eq:def_mass}. 
To verify (i) and (v), we use Proposition~\ref{prop:diagram} with $a=d-2$.
It follows from the definition of  $F_z$ in \eqref{eq:def_Fz} and $m < \mz$ that
\begin{equation}
\sum_{x\in \Z^d} \abs x^{d-2} \abs{ F_z\supm(x) } 
\le \sum_{x\in \Z^d} \abs x^{d-2} \( z  D\supm(x)   + \abs{ \Pi_z\supm(x) } \)
\le \frac{ z_c }d ( d-1 + \cosh \mz) + K_{d-2}, 
\end{equation}
which is bounded since $\mz \to 0$ as $z\to z_c$. 
This verifies Assumption~\ref{ass:Fz}(i) with $\eps = d - 4 > 0$ and Assumption~\ref{ass:Fz}(v) with $p=1$. 
Lastly, to verify Assumption~\ref{ass:Fz}(ii), we tilt equation \eqref{eq:def_Fz} and use the Fourier transform, \eqref{eq:G_exp_bound}, and $m < \mz$
to get
\begin{equation}
\hat F_z\supm(0) = \frac 1 { \hat G_z \supm (0) } 
= \frac 1 { \sum_{x\in \Z^d} G_z(x) e^{mx_1} } >0, 
\end{equation}
and we use the Monotone Convergence Theorem to get
\begin{equation}
\hat F_z(0) = \frac 1 { \hat G_z  (0) }  
= \frac 1 { \chi(z) } \to  \frac 1 { \chi(z_c) } = 0
	\qquad (z\to z_c).
\end{equation}
This verifies all hypotheses of Theorem~\ref{thm:near_critical} and prove the desired \eqref{eq:G_saw} for $z \in [z_c - \delta, z_c)$.

For the remaining $z < z_c - \delta$, 
by monotonicity we have $\norm{ G_z }_2  \le \norm{ G_{z_c-\delta} }_2  < \infty$ and $m(z) \ge m(z_c-\delta) > 0$, since $\delta > 0$.
With any choice of $c\in(0,1)$, we then have
\begin{equation}
e^{-(1-c) m(z)\norm x_\infty } 
\le e^{-(1-c) m(z_c - \delta)\norm x_\infty } 
\le \frac {C_{c,\delta,d}} { \nnnorm x^{d-2} }. 
\end{equation}
Plugging into \eqref{eq:G_exp_bound}, we get
\begin{equation} \label{eq:Gz_small}
G_z(x) \le \norm{ G_{z_c-\delta} }_2\, e^{-m(z) \norm x_\infty}
\lesssim \frac 1 { \nnnorm x^{d-2} }  e^{-c m(z)\norm x_\infty }
\le \frac 1 { \nnnorm x^{d-2} }  e^{-(c d^{-1/2}) m(z) \abs x }, 
\end{equation}
which is the desired bound. 
This proves Theorem~\ref{thm:saw}, subject to Proposition~\ref{prop:diagram}.
\end{proof}

\begin{proof}[Proof of Proposition~\ref{prop:diagram}]
This is a standard diagrammatic estimate, as in the paragraph containing \cite[(3.91)]{HS92a}. 
The argument in \cite{HS92a} uses a uniform bound on $\sup_{x\in \Z^d} \abs {x_1}^{2+\eps} G_z(x)$ and the uniformly small bubble diagram \eqref{eq:bubble^m} to prove \eqref{eq:Pi_moments} with $a = 2+\eps$ and $\eps <1/2$. 
But by the asymptotic formula of $G\crit(x)$ in \eqref{eq:G_crit_asymp} (which is proved after \cite{HS92a}), we now have
\begin{equation} 
\label{eq:Ga_sup}
\sup_{x\in \Z^d} \abs x^a G_z(x)
\le \sup_{x\in \Z^d} \abs x^{d-2} G\crit(x) \le K
	\qquad (a\le d-2)
\end{equation}
for some constant $K < \infty$. 
With \eqref{eq:Ga_sup}, the proof in \cite{HS92a} yields \eqref{eq:Pi_moments} for all $a \le d-2$. 

We now provide more details. 
By definition, the function $\Pi_z$ is an absolutely convergent series $\Pi_z(x) = \sum_{N=1}^\infty (-1)^N \Pi_z\supN(x)$ (the superscript $(N)$ here is \emph{not} an exponential tilt), where each $\Pi_z\supN$ is non-negative. 
We write $\Pi_z\supNm$ for the exponential tilt of $\Pi_z\supN$, and it suffices to prove
\begin{equation} \label{eq:Pi_moments_strong}
\sum_{x\in \Z^d} \abs x^a \sum_{N=1}^\infty \Pi_z\supNm (x) \le K_a . 
\end{equation}
We bound each $N$.
The $N=1$ term does not contribute, because $\Pi_z^{(1)}(x)$ is only nonzero when $x=0$ \cite[(4.15)]{Slad06} and $a > 0$.
For $N\ge 2$, 
we use the usual diagrammatic estimate of $\Pi\supN_z(x)$ \cite[Proposition~4.2]{Slad06}, and we distribute the tilt $e^{mx_1}$ along one side of the diagram, and distribute $\abs x^a$ along the other side of the diagram.
We illustrate this using the 4-loop term $\Pi_z^{(4,m)}$.

\begin{figure}[h]
\center{
\PiFour
\caption{
Diagrammatic representation for the upper bound of $\Pi_z^{(4)}(x)$. 
Solid lines represent $H_z$; slashed lines represent $G_z$. 
Internal vertices are summed over $\Z^d$. 
}
\label{figure:Pi4}
}
\end{figure}

Define $H_z(x) = G_z(x) - \delta_{0,x}$, so that $B\supm(z) = \norm{ H_z\supm(x) }_2^2$.
By \cite[Proposition~4.2]{Slad06}, 
\begin{equation}
\Pi_z^{(4)}(x) \le \sum_{ u,v \in \Z^d} 
	H_z(u)^2 G_z(v) H_z(u-v) G_z(x-u) H_z(x-v)^2 
\end{equation}
(see Figure~\ref{figure:Pi4}
where $u$ is the top-left vertex).
We distribute the exponential tilt along the top side using $e^{mx_1} = e^{mu_1} e^{m(x_1-u_1)}$. This gives
\begin{equation}
\Pi_z^{(4,m)}(x) \le \sum_{ u,v \in \Z^d} 
	H_z(u) H_z\supm(u) G_z(v) H_z(u-v) G_z\supm(x-u) H_z(x-v)^2.
\end{equation}
We then multiply by $\abs x^a$ and sum over $x$. 
We split $\abs x^a$ along the bottom side of the diagram using $\abs x^a \le 2^a ( \abs v^a + \abs{ x-v }^a )$. 
Then the term produced by $\abs v^a$ is
\begin{equation}
T :=  \sum_{ x,u,v \in \Z^d} 
	H_z(u) H_z\supm(u) \Big( \abs v^a G_z(v) \Big) H_z(u-v) G_z\supm(x-u) H_z(x-v)^2.
\end{equation}
By \cite[Lemma~4.6]{Slad06}, $T$ is bounded by the product of $\norm{ \abs v^a G_z(v) }_\infty$ with the $\ell^2$ norms of each of the other six factors. 
We bound the $\ell^\infty$ norm by $K$ using \eqref{eq:Ga_sup}, 
and we bound the remaining $\ell^2$ norms using Proposition~\ref{prop:lace}(iii). 
Since
\begin{equation} \label{eq:ell2_norms}
\norm{ H_z\supm }_2 = \sqrt{ B\supm(z)} \le \sqrt \kappa, \qquad
\norm{ G_z\supm}_2 = \sqrt{ 1 + \norm{ H_z }_2^2 } \le \sqrt{ 1 + \kappa }
\end{equation}
for all $z$ and all $m\in [0,\mz)$ by \eqref{eq:bubble^m},
the resultant bound is $T \le  K (\sqrt \kappa)^5 \sqrt{1+\kappa}$.
For the other term produced by $\abs{ x-v}^a$,
we put the $\ell^\infty$ norm on $\abs{x-v}^a H_z(x-v)$, and we get $K (\sqrt \kappa)^4 (\sqrt{1+\kappa})^2$ as the upper bound.
Together, this gives
\begin{equation}
\sum_{x\in \Z^d} \abs x^a \Pi_z^{(4,m)}(x) 
\le 2^{a+1} K (\sqrt \kappa)^4 (\sqrt{1+\kappa})^2 
\le 2^{a+1} K \Big(\sqrt{\kappa(1+\kappa)}\Big)^3 .
\end{equation}

For general $N \ge 2$, 
since distributing $\abs x^a$ creates a factor $\le N^a$, and 
since there are at most $N$ terms like $T$ above, the resultant bound is
\begin{equation}
\sum_{x\in \Z^d} \abs x^a \Pi_z^{(N,m)}(x)
\le K N^{a+1} \Big( \sqrt{ \kappa  (1+\kappa) } \Big)^{N-1} .
\end{equation}
Using $\kappa(1+\kappa)<1$, summing the above over $N$ gives the desired \eqref{eq:Pi_moments_strong}.
\end{proof}

\section{Self-avoiding walk on $\T^d_r$}
\label{sec:torus}

In this section we prove Theorem~\ref{thm:torus} and Corollary~\ref{cor:torus}, which concern self-avoiding walk on the discrete torus $\T^d_r = (\Z / r\Z)^d$. 
The proof of Theorem~\ref{thm:torus}, presented in Sections~\ref{sec:torus_ub}--\ref{sec:torus_lb}, uses the strategy developed in \cite{Slad23_wsaw} for weakly self-avoiding walk, but differs in the absence of a small parameter. 
This forces us to restrict to $\abs x \ge M$ with a sufficiently large $M$. 
The proof of Corollary~\ref{cor:torus} is presented in Section~\ref{sec:torus_cor}.

\subsection{Unfolding of a torus walk and consequences}
\label{sec:torus_ub}
The self-avoiding walk torus two-point function $G_z\supT(x)$ is defined by the analogue of \eqref{eq:def_G} with walks on the torus $\T^d_r$ rather than on $\Z^d$. 
Since the torus has only $V=r^d$ vertices, a self-avoiding walk can have at most length $V$. Hence, the sum \eqref{eq:def_G} in this case is a finite sum, and the torus susceptibility $\chi\supT(z) = \sum_{x\in \T^d_r} G_z\supT(x)$ is a polynomial in $z$. 

For $r\ge3$, walks on $\T^d_r$ 
(which we identify with $[ - \frac r 2, \frac r 2 )^d \cap \Z^d$) 
are in one-to-one correspondence with walks on $\Z^d$, via the canonical projection of $\Z^d$ to $\T^d_r$. We refer to the $\Z^d$ walk obtained through this correspondence as the \emph{unfolding} of the torus walk.\footnote{
In detail, if $\omega$ is a torus walk then its unfolding $\tilde \omega$ is defined by $\tilde \omega(0) = \omega(0)$ and $\tilde\omega(i) = \tilde \omega(i-1) + \omega(i) -  \omega(i-1) $ for all $i\ge 1$, with the sum computed on $\Z^d$ and then projected onto the torus.
}
Since the unfolding of a torus self-avoiding walk ending at $x\in \T^d_r$ must be a $\Z^d$ self-avoiding walk ending at $x+ru$ for some $u\in \Z^d$, we get the inequality
\begin{equation} \label{eq:unfold}
G_z\supT(x) \le \sum_{u \in \Z^d} G_z(x+ru). 
\end{equation}
The upper bound of Theorem~\ref{thm:torus} follows quickly from bounding the sum over $u\ne 0$ using Theorem~\ref{thm:saw}. 

\begin{proof}[Proof of upper bound of Theorem~\ref{thm:torus}]
By \eqref{eq:unfold}, Theorem~\ref{thm:saw}, and the inequality $\nnnorm y \ge \abs y \ge \norm y_\infty$, we have
\begin{equation}
G_z\supT(x) - G_z(x)
\le \sum_{u\ne 0} \frac{ c_0 }{ \norm{x+ru}_\infty^{d-2} } e^{-c_1\mz \norm{x+ru}_\infty }.
\end{equation}
This sum is estimated in \cite[Lemma~4.2]{HMS23}. 
Taking $a=2$ and $\nu = c_1\mz$ in the lemma, we obtain 
\begin{equation}
G_z\supT(x) - G_z(x)
\lesssim \frac{c_0 }{ c_1^2} \frac 1 { \mz^2 r^d  } e^{-\frac {c_1} 4 \mz r },
\end{equation}
so the upper bound of \eqref{eq:G^T} will follow from 
\begin{equation} \label{eq:mass_chi}
\frac 1 { \mz^{2} } \lesssim \chi(z).
\end{equation}
For $z$ close to $z_c$, \eqref{eq:mass_chi} follows from $\chi(z) \ge (1- z/z_c)\inv$ and the asymptotic behaviour $\mz \asymp (z_c - z)^{1/2}$ from  \eqref{eq:mass_asymp}.
We therefore fix $z_1 < z_c$ and prove \eqref{eq:mass_chi} for $z \le z_1$. 
For these $z$, by the monotonicity of $m$ and $\chi$ we have
$\mz^{-2} =  \chi(0)\mz^{-2 } \le \chi(z) m(z_1)^{-2 }$.
This completes the proof of the upper bound of Theorem~\ref{thm:torus}.
\end{proof}

\subsection{Lower bound for the torus two-point function}
\label{sec:torus_lb}
To complete the proof of Theorem~\ref{thm:torus}, we need a lower bound of order $\chi(z)/r^d$ for the difference
\begin{equation}
\psi\subrz\supT (x) = G_z\supT(x) - G_z(x).
\end{equation}
We decompose $\psi\subrz\supT(x)$ as
\begin{equation} \label{eq:def_psi}
\psi\subrz\supT (x) = \psi\subrz(x) - ( \psi\subrz(x) - \psi\subrz\supT(x) )
\quad\text{with}\quad 
\psi\subrz(x) = \sum_{u\ne0} G_z(x+ru). 
\end{equation}
The next two lemmas estimate the two factors in the decomposition \eqref{eq:def_psi}. 

The first lemma concerns $\psi\subrz$ and is proved for wealky self-avoiding walk in \cite[(6.22)]{Slad23_wsaw}. 
The proof uses the asymptotics of $G\crit(x)$ given in \eqref{eq:G_crit_asymp} and a differential inequality, 
and it applies \emph{verbatim} to strictly self-avoiding walk. We therefore omit its proof. 

\begin{lemma} 
\label{lem:psi_lb}
Let $d>4$. 
There are constants $c_3, c_\psi > 0$ such that 
for all $r\ge 3$, $z \in [z_c - c_3r^{-2}, z_c )$, and $x\in \T^d_r$,
\begin{equation} \label{eq:psi}
\psi\subrz(x)  \ge  c_\psi \frac{ \chi(z) }{r^d }.
\end{equation}
\end{lemma}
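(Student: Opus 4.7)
The plan is to bound $\psi_{r,z}(x)$ from below by restricting the sum defining it to the range $1 \le |u| \le R$ with $R = c_0/(rm(z))$ for a small $c_0$, and to estimate each term using a near-critical pointwise lower bound of the form $G_z(y) \gtrsim |y|^{-(d-2)}$ valid on $R_0 \le |y| \lesssim 1/m(z)$. Identifying $\T^d_r$ with $[-r/2, r/2)^d \cap \Z^d$ gives $|x + ru| \asymp r|u|$ away from $|u|=0$, so using that the number of $u \in \Z^d$ with $|u| \asymp k$ is $\asymp k^{d-1}$,
\[
\psi\subrz(x) \gtrsim r^{-(d-2)} \sum_{k=1}^{R} k \asymp \frac{R^2}{r^{d-2}} \asymp \frac{1}{r^d m(z)^2} \asymp \frac{\chi(z)}{r^d},
\]
using $\chi(z) \asymp m(z)^{-2}$ from \eqref{eq:mass_asymp} (together with $\chi(z) \ge (1 - z/z_c)^{-1}$ to cover the lower end of the $z$-range).

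The substantive ingredient is the near-critical lower bound on $G_z(y)$. First, \eqref{eq:G_crit_asymp} yields $G\crit(y) \ge 2c|y|^{-(d-2)}$ for $|y| \ge R_0$. To transfer this to $z < z_c$, I would use the differential inequality
\[
z \, \partial_z G_z(y) \le (G_z * G_z)(y),
\]
obtained by splitting each $n$-step self-avoiding walk at its $n+1$ vertices into two individually self-avoiding subwalks: dropping the mutual-avoidance constraint yields the upper bound $(n+1) c_n(y) \le (c \ast c)_n(y)$, hence the stated inequality on $\sum_n n z^n c_n(y)$. Combining monotonicity $G_z \le G\crit$ with the standard convolution estimate $\sum_w \nnnorm{w}^{-(d-2)} \nnnorm{y-w}^{-(d-2)} \lesssim |y|^{-(d-4)}$ (valid for $d > 4$), integration over $z$ yields
\[
G\crit(y) - G_z(y) \le \frac{z_c - z}{z} (G\crit * G\crit)(y) \lesssim (z_c - z)\, |y|^{-(d-4)}.
\]
This is at most $c|y|^{-(d-2)}$ once $(z_c - z)|y|^2 \lesssim 1$; since $z_c - z \asymp m(z)^2$ by \eqref{eq:mass_asymp}, the condition is met uniformly for $|y| \lesssim 1/m(z)$.

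The main obstacle is matching the constants: $c_3$, $c_0$, and $R_0$ must be chosen against the constants produced by \eqref{eq:G_crit_asymp} and the convolution estimate so that (i) $R \ge 1$ throughout the specified range of $z$, (ii) for all $1 \le |u| \le R$ the translate $x + ru$ lies in $[R_0, \,c'/m(z)]$ (here $|x+ru| \ge r|u|/2 \ge r/2$ for $r \ge 2R_0$, and $|x+ru| \le 3r|u|/2 \le c'/m(z)$ by the choice of $R$), and (iii) the hypothesis $z_c - z \le c_3 r^{-2}$ ensures $1/m(z) \gtrsim r$ so that the range of validity of the pointwise lower bound extends at least to scale $r$. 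This amounts to careful bookkeeping rather than new ideas. Unlike the weakly self-avoiding setting of \cite{Slad23_wsaw}, no small parameter is available, but the structure of the argument is unchanged.
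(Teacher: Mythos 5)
Your proposal is correct and reconstructs essentially the same argument that the paper invokes by citation: the paper omits the proof of Lemma~\ref{lem:psi_lb}, pointing instead to \cite{Slad23_wsaw} and summarising that proof as ``the asymptotics of $G\crit(x)$ given in \eqref{eq:G_crit_asymp} and a differential inequality,'' which is precisely the combination you use (critical lower bound from \eqref{eq:G_crit_asymp}, transfer to subcritical $z$ via $z\,\partial_z G_z \le G_z*G_z$ and the convolution bound $(G\crit*G\crit)(y)\lesssim \nnnorm{y}^{-(d-4)}$, then a shell-counting estimate over $1\le|u|\lesssim 1/(r m(z))$ with $m(z)\asymp(z_c-z)^{1/2}$ and $\chi(z)\asymp(z_c-z)^{-1}$). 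The bookkeeping you flag—adjusting for $|y|<R_0$ when $r<2R_0$, for which $G_z(y)$ is bounded below uniformly on the compact set of such $y$ and $z\in[z_c-c_3/9,z_c)$—is genuinely routine, so I agree this is a matching of constants rather than a gap.
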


The second lemma bounds the second term of \eqref{eq:def_psi}. 
Its proof generalises and corrects an inconsequential error in the proof given in \cite{Slad23_wsaw} for weakly self-avoiding walk.

\begin{lemma}
\label{lem:psi_diff}
Let $d>4$. 
There is a constant $C>0$ such that 
for all $r\ge 3$, $z \in (0, z_c)$, and $x\in \T^d_r$,
\begin{equation} \label{eq:psi_diff}
\psi\subrz(x) - \psi\subrz\supT(x) 
\le C \frac{ \chi(z) }{r^d } \( G_z*G_z(x) + \frac{ \chi(z)^2 }{r^d} \).
\end{equation}
\end{lemma}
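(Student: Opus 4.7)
\medskip

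The plan is to identify $\psi\subrz(x) - \psi\subrz\supT(x)$ as the $z$-weight of $\Z^d$ self-avoiding walks whose canonical projection to $\T^d_r$ \emph{fails} to be self-avoiding, and to bound this weight by a three-piece convolution in the spirit of \cite{Slad23_wsaw}. By the definitions of $\psi\subrz$ and $\psi\subrz\supT$,
\begin{equation}
\psi\subrz(x) - \psi\subrz\supT(x) = \sum_{u\in \Z^d} G_z(x+ru) - G_z\supT(x).
\end{equation}
Unfolding (as used for the upper bound of Theorem~\ref{thm:torus}) identifies torus self-avoiding walks ending at $x$ with those $\Z^d$ self-avoiding walks ending at some $x+ru$ whose torus projection remains self-avoiding. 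Hence the displayed difference equals the $z$-weight of $\Z^d$ self-avoiding walks $\tilde\omega$ ending at some $x+ru$ that contain two vertices $\tilde\omega(i), \tilde\omega(j)$, $i<j$, with $\tilde\omega(j) - \tilde\omega(i) = rv$ for some $v \in \Z^d \setminus \{0\}$.

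For each such $\tilde\omega$ I fix the lexicographically first bad pair $(i,j)$ and split $\tilde\omega$ at these times into three subwalks $\omega^1, \omega^2, \omega^3$ with endpoints $0 \to y \to y+rv \to x+ru$, where $y = \tilde\omega(i)$. Each $\omega^k$ is itself a $\Z^d$ self-avoiding walk (subwalks of SAWs are SAWs), and discarding the mutual avoidance between the three pieces yields the BK-type upper bound
\begin{equation}
\sum_{u\in\Z^d} G_z(x+ru) - G_z\supT(x)
\le \sum_{y\in\Z^d} G_z(y) \sum_{v\ne 0} G_z(rv) \sum_{u\in\Z^d} G_z(x-y + r(u-v)).
\end{equation}
Substituting $u' = u-v$ collapses the inner sum to $G_z(x-y) + \psi\subrz(x-y)$, while the middle sum is exactly $\psi\subrz(0)$, so
\begin{equation} \label{eq:prop_decomp}
\psi\subrz(x) - \psi\subrz\supT(x) \le \psi\subrz(0)\,\bigl[(G_z*G_z)(x) + (G_z*\psi\subrz)(x)\bigr].
\end{equation}

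To finish, I will show $\|\psi\subrz\|_\infty \lesssim \chi(z)/r^d$. For any $x\in \T^d_r$ identified with $[-r/2, r/2)^d \cap \Z^d$ and any $u\ne 0$ one has $|x+ru| \gtrsim r|u|$, so Theorem~\ref{thm:saw} together with a sum-to-integral comparison in polar coordinates on $\Z^d$ gives
\begin{equation}
\psi\subrz(x)
\lesssim \sum_{u\ne 0} \frac{e^{-c_1 m(z) r |u|}}{(r|u|)^{d-2}}
\lesssim \frac{1}{r^{d-2}} \int_{1/2}^\infty \rho\, e^{-c_1 m(z) r \rho}\, d\rho
\lesssim \frac{1}{m(z)^2 r^d}.
\end{equation}
Combined with $m(z)^{-2}\lesssim \chi(z)$ (namely \eqref{eq:mass_chi}, established in the proof of the upper bound of Theorem~\ref{thm:torus} above), this yields $\|\psi\subrz\|_\infty \lesssim \chi(z)/r^d$. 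Young's inequality then gives $(G_z*\psi\subrz)(x) \le \chi(z)\|\psi\subrz\|_\infty \lesssim \chi(z)^2/r^d$, and substitution into \eqref{eq:prop_decomp} produces \eqref{eq:psi_diff}. The delicate step is the BK-style decomposition: one must fix a canonical ``first bad pair'' to avoid overcounting and verify that dropping the mutual avoidance across the three $\Z^d$ subwalks still yields a valid upper bound. The analytic estimates on $\psi\subrz$ are then routine given Theorem~\ref{thm:saw} and \eqref{eq:mass_chi}.
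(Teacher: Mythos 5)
Your combinatorial decomposition is correct and is essentially the paper's argument, just phrased differently: the paper introduces the weights $K, K\supT, K^+$ with $K\supT = K K^+$ and bounds $1-K^+(\omega) \le \sum_{s<t} \abs{U^+_\st(\omega)}$, which is a sum over all bad pairs; you instead fix the first bad pair, giving an injective map into triples of SAWs. Both then drop the mutual avoidance among the three subwalks and produce the same three-$G_z$ bound and, after the substitution $u'=u-v$, the identical decomposition $\psi\subrz(0)\bigl[(G_z*G_z)(x) + (G_z*\psi\subrz)(x)\bigr]$ (the paper writes $(G_z*\psi\subrz)(x) = \sum_{w\ne0}(G_z*G_z)(x+rw)$). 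Up to this point the two arguments are interchangeable.

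The gap is in the final step. Young's inequality gives $(G_z*\psi\subrz)(x) \le \chi(z)\,\sup_{y\in\Z^d}\psi\subrz(y)$, and this requires a uniform bound on $\psi\subrz$ over \emph{all of} $\Z^d$, since $x-y$ ranges over $\Z^d$. But the estimate $\psi\subrz(y)\lesssim \chi(z)/r^d$ that you prove (and that the paper proves) holds only for $y$ in the fundamental domain $\T^d_r = [-\frac r2,\frac r2)^d\cap\Z^d$, and it is \emph{false} on $\Z^d$: taking $y=rw$ with $w\ne 0$, the term $u=-w$ in $\psi\subrz(rw)=\sum_{u\ne0}G_z(r(w+u))$ contributes $G_z(0)=1$, so $\sup_{\Z^d}\psi\subrz\ge 1$, whereas $\chi(z)/r^d$ is $O(r^{-d})$ for any $z$ bounded away from $z_c$, and the lemma is claimed for all $z\in(0,z_c)$. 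The paper avoids this by writing $(G_z*\psi\subrz)(x)=\sum_{w\ne0}(G_z*G_z)(x+rw)$ and estimating each term at the \emph{large} argument $x+rw$ ($\abs{x+rw}\gtrsim r$ for $x\in\T^d_r$, $w\ne0$) via Theorem~\ref{thm:saw} and a convolution estimate, then summing with \cite[Lemma~4.2]{HMS23}; this exploits the decay of $G_z*G_z$ that your $\ell^\infty$ bound on $\psi\subrz$ discards. So the combinatorics is fine, but the analytic conclusion as written does not follow.
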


\begin{proof}[Proof of lower bound of Theorem~\ref{thm:torus}, assuming Lemma~\ref{lem:psi_diff}]
The constant $c_3$ in the theorem is given by Lemma~\ref{lem:psi_lb}. To prove a lower bound for $\psi\subrz\supT(x)$, by decomposition \eqref{eq:def_psi} and Lemma~\ref{lem:psi_lb}, it suffices to show 
\begin{equation} \label{eq:psi_diff_2}
\psi\subrz(x) - \psi\subrz\supT(x) \le \half c_\psi \frac{ \chi(z) } {r^d},
\end{equation} 
where $c_\psi$ is the constant of \eqref{eq:psi}. 
This will yield $\psi\subrz\supT(x) \ge \half c_\psi \chi(z) / r^d$, which gives the desired result. 
To prove \eqref{eq:psi_diff_2}, we use Lemma~\ref{lem:psi_diff}. 
In view of \eqref{eq:psi_diff}, it suffices to choose $x$ and $z$ so that 
$G_z*G_z(x)$ and  ${ \chi(z)^2 } / {r^d}$ are sufficiently small. 

For $z$, since $\chi(z)\asymp (z_c-z)\inv$, 
we can make ${ \chi(z)^2 } / {r^d}$ small by choosing $(z_c-z)^2 r^d$ large, \ie, by choosing $z_c - z \ge c_4 r^{-d/2}$ with $c_4$ sufficiently large. 
For $x$, since $G_z(x) \le G\crit (x) \lesssim \nnnorm x^{-(d-2)}$ by \eqref{eq:G_crit_asymp}, a standard convolution estimate \cite[Proposition~1.7(i)]{HHS03} implies that $G_z*G_z(x) \lesssim \nnnorm x^{-(d-4)}$ independent of $z$,
so we can make $G_z*G_z(x)$ small by restricting to $\abs x \ge M$ with $M$ sufficiently large. 
These choices of $c_4$ and $M$ ensure \eqref{eq:psi_diff_2} holds, and the proof of the lower bound of Theorem~\ref{thm:torus} is complete.
\end{proof}

It remains to prove Lemma~\ref{lem:psi_diff}. 
By definitions of $\psi\subrz$ and $\psi\subrz\supT$, we have
\begin{align} \label{eq:psi_diff_rewrite}
\psi\subrz(x) - \psi\subrz\supT(x) 
&= \bigg( \sum_{u\ne0} G_z(x+ru) \bigg) - \( G_z\supT(x) - G_z(x) \)	 \nl
&= \bigg( \sum_{u\in \Z^d} G_z(x+ru) \bigg) -  G_z\supT(x). 
\end{align}
The difference comes from $\Z^d$ walks that gain additional self-intersections when projected onto the torus. Indeed, if a walk has the same number of self-intersections before and after the projection, then its contributions to $\sum_{u\in \Z^d} G_z(x+ru)$ and $G_z\supT(x)$ cancel exactly. 
More precisely, we let $\pi_r: \Z^d \to \T^d_r = (\Z/r\Z)^d$ denote the canonical projection map. 
For an $n$-step $\Z^d$ walk $\omega$ and $0\le s< t\le n$, we define
\begin{align}
U_\st (\omega) &= - \1 \{ \omega(s) = \omega(t) \},	\\
U_\st\supT (\omega) &= - \1 \{ \pi_r\omega(s) = \pi_r\omega(t) \},	\\
U_\st^+ (\omega) &= - \1 \{ \pi_r\omega(s) = \pi_r\omega(t) \text{ and }\omega(s) \ne \omega(t) \}	,
\end{align}
and define weights
\begin{align}
K(\omega) = \prod (1 +  U_\st(\omega)), \quad
K\supT(\omega) = \prod (1 +  U_\st\supT(\omega)), \quad
K^+(\omega) = \prod (1 +  U_\st^+(\omega)), 
\end{align}
where the products range over $0 \le s < t \le n$. 
For example, the product $K(\omega)$ evaluates to 0 if $\omega$ has at least one self-intersection, and it evaluates to 1 otherwise, so it gives precisely the weight of a $\Z^d$ self-avoiding walk. 
It follows from the definitions that $K\supT(\omega) = K(\omega) K^+(\omega)$, and that
\begin{alignat}{2}
G_z (y) &= \sum_{n=0}^\infty \sum_{\omega \in \Wcal_n(y) } z^nK(\omega)
	\qquad &&(y\in \Z^d), 	\\
G_z \supT (x) &= \sum_{u\in \Z^d} \sum_{n=0}^\infty \sum_{\omega \in \Wcal_n(x+ru) } z^n K\supT(\omega)
	\qquad &&(x\in \T^d_r). 
\end{alignat}
Plugging into \eqref{eq:psi_diff_rewrite}
and using $K(\omega) - K\supT(\omega) = K(\omega) [ 1 - K^+(\omega) ]$, we get
\begin{equation} \label{eq:psi_diff_lace}
\psi\subrz(x) - \psi\subrz\supT(x) 
= \sum_{u\in \Z^d} \sum_{n=0}^\infty \sum_{\omega \in \Wcal_n(x+ru) } z^n K(\omega) [ 1 - K^+(\omega) ]  .
\end{equation}
Note that if a walk $\omega$ gains no additional self-intersections when projected onto the torus, then $ 1 - K^+(\omega)=0$ and it does not contribute to $\psi\subrz - \psi\subrz\supT$. 
Equation \eqref{eq:psi_diff_lace} corrects \cite[(6.33)]{Slad23_wsaw} which misses the $u=0$ term. The error is not harmful, however, because all subsequent estimates in \cite{Slad23_wsaw} sum over all $u\in \Z^d$.

\begin{proof}[Proof of Lemma~\ref{lem:psi_diff}]
We use \eqref{eq:psi_diff_lace}. 
To bound $1 - K^+$, we use the inequality 
\begin{equation}
1 - \prod_{a\in A} (1 - u_a)	\le \sum_{a\in A} u_a 	\qquad (u_a\in[0,1]),
\end{equation}
which follows by induction on the cardinality of the index set $A$, 
with $u_a = \abs{ U_\st^+(\omega) }$. This gives the bound
\begin{equation}
\label{eq:psi_diff_bdd}
\psi\subrz(x) - \psi\subrz\supT(x) 
\le  \sum_{u\in \Z^d}  \sum_{n=0}^\infty \sum_{\omega \in \Wcal_n(x+ru) } z^n K(\omega) \sum_{0\le s<t \le n} \abs{ U_\st^+(\omega) }  .
\end{equation}
For the sum over $U_\st^+(\omega)$ to be nonzero, $\pi_r\omega$ must have more self-intersections than $\omega$. 
In this case, $\omega$ must pass through two distinct points $y$ and $y+rv$ ($v\in \Z^d$) that have the same torus projection. 
Without loss of generality, we assume $\omega$ travels from $0$ to $y$, $y$ to $y+rv$, then from $y+rv$ to $x+ru$. 
This decomposes $\omega$ into three subwalks $\omega_1 \in \mathcal W_{n_1}(y)$, $\omega_2 \in\mathcal W_{n_2}(rv)$, and $\omega_3 \in \mathcal W_{n_3}(x-y + r(u-v) )$, of lengths $n_1 = s$, $n_2 = t-s$, and $n_3 = n-t$, respectively. 
Using the decomposition, the sums over $n,s,t$ in \eqref{eq:psi_diff_bdd} is equivalent to summing over $n_1 \ge 0$, $n_2 \ge 1$, and $n_3\ge 0$. 
For the summand, we use $z^n = z^{n_1}z^{n_2}z^{n_3}$ and $K(\omega) \le K(\omega_1) K(\omega_2) K(\omega_3)$, which follows from disregarding the interactions between the subwalks. 
This yields 
\begin{align}
\label{eq:psi_diff_bdd_2}
\psi\subrz(x) - \psi\subrz\supT(x) 
&\le \sum_{u,y\in \Z^d}\sum_{v\ne0} G_z(y) G_z(rv) G_z(x-y+r(u-v))\nl
&= \sum_{v\ne0} G_z(rv) \sum_{w,y\in \Z^d} G_z(y) G_z(x-y+rw)  \nl
&= \psi_\rz(0) \Big( G_z * G_z(x) + \sum_{w\ne0} G_z* G_z (x+rw) \Big) ,
\end{align}
where $w=u-v$.
For $w\ne 0$, it follows from Theorem~\ref{thm:saw}, the inequality
$\abs y + \abs{ x-y+rw} \ge \abs{x+rw}$, and a standard convolution estimate \cite[Proposition~1.7(i)]{HHS03} that
\begin{equation}
G_z* G_z (x+rw) \lesssim
\sum_{y\in \Z^d} \frac{ e^{-c_1\mz \abs y} }{ \nnnorm y^{d-2} } 
	\frac{e^{-c_1\mz\abs{x-y+rw}} }{\nnnorm { x -y+rw }^{d-2} }
\lesssim \frac{ e^{-c_1\mz \abs{x+rw} } } { \nnnorm{x+rw}^{d-4} }, 
\end{equation}
so that, as in the proof of the upper bound of Theorem~\ref{thm:torus}, we can use \cite[Lemma~4.2]{HMS23} with $a=4$ and $\nu = c_1\mz$ to get
\begin{equation}
\label{eq:G_conv}
\sum_{w\ne0} G_z* G_z (x+rw)
\lesssim \frac 1 { \mz^4 r^d  } e^{-\frac {c_1} 4 \mz r }
\lesssim \frac{ \chi(z)^2 }{ r^d } e^{-\frac {c_1} 4 \mz r }
\le \frac{ \chi(z)^2 }{ r^d } .
\end{equation}
Since in the proof of upper bound of Theorem~\ref{thm:torus} we proved $\psi_\rz(x) \lesssim \chi(z) / r^d$ without using this notation, 
from \eqref{eq:psi_diff_bdd_2} and \eqref{eq:G_conv} we obtain
\begin{equation}
\psi\subrz(x) - \psi\subrz\supT(x) 
\lesssim \frac{ \chi(z) }{r^d} \( G_z * G_z(x) + \frac{ \chi(z)^2 }{ r^d } \), 
\end{equation}
which is the desired result.
\end{proof}

\subsection{Proof of Corollary~\ref{cor:torus}}
\label{sec:torus_cor}

\begin{proof}
We use Theorem~\ref{thm:torus}. 
The upper bound follows immediately from the upper bound of \eqref{eq:G^T} and $G_z(x) \lesssim \nnnorm x^{-(d-2)}$. 
The lower bound when $\abs x \ge M$ also follows from \eqref{eq:G^T}, 
provided 
\begin{equation} \label{eq:G_x}
G_z(x) \asymp \frac 1 {\nnnorm x^{d-2} }
\qquad (r\ge 3,\ z \in [z_c - c_3r^{-2}, z_c ), \ x\in \T^d_r ),
\end{equation}
which is proved for weakly self-avoiding walk in \cite[Remark~6.2]{Slad23_wsaw} (with a possibly smaller $c_3$) using a differential inequality. The proof applies \emph{verbatim} to strictly self-avoiding walk. 
For $\abs x \le M$, 
since $z \ge z_c - c_3r^{-2} \ge \frac{z_c}2$ when $r$ is sufficiently large,
and since there is a self-avoiding walk from $0$ to $x$ with length $\norm x _1 \le \sqrt d \abs x \le \sqrt d M$, we have the lower bound
\begin{equation}
G_z^\T(x) \ge (z \wedge 1 )^{\sqrt d M}
\ge \( \frac{z_c}2 \wedge 1 \)^{\sqrt d M} .
\end{equation} 
The desired result then follows from $\nnnorm x^{-(d-2)} \le 1$ and 
\begin{equation}
\frac{ \chi(z) }{ r^d }
\le \frac{ \chi(z_c - c_4 r^{-d/2}) }{ r^d }
\asymp \frac{ r^{d/2} }{ r^d  }\le 1.
\end{equation} 
This completes the proof of Corollary~\ref{cor:torus}.
\end{proof}

\section*{Acknowledgements}
The work was supported in part by NSERC of Canada.
We thank Gordon Slade for comments on a preliminary version. 

{ \small

}

\end{document}